\title{The topology of rationally and polynomially convex domains}
\author{Kai~Cieliebak and Yakov~Eliashberg} 
\thanks{Y.~Eliashberg is partially supported by the NSF grant DMS-1205349} 
\address{Kai Cieliebak \\ Institut f\"ur Mathematik \\
    Universit\"at Augsburg \\
   %\\ 86135 Augsburg 
 Germany}
 \address{Yakov Eliashberg \\ Department of Mathematics \\   Stanford University \\ USA}
\date{}  
\let\oldmarginpar\marginpar
\renewcommand\marginpar[1]{\-\oldmarginpar[\raggedleft\footnotesize #1]%
{\raggedright\footnotesize #1}}
\theoremstyle{plain}
\newtheorem{theorem}{Theorem}[section]
\newtheorem{thm}[theorem]{Theorem}
\newtheorem{cor}[theorem]{Corollary}
\newtheorem{prop}[theorem]{Proposition}
\newtheorem{lemma}[theorem]{Lemma}
\newtheorem{criterion}[theorem]{Criterion}
\theoremstyle{remark}
\newtheorem{remark}[theorem]{Remark}
\newtheorem*{remark*}{Remark}
\newtheorem*{example*}{Example}
\theoremstyle{definition}
\newcommand{\id}{{\rm id}}
\newcommand{\wt}{\widetilde}
\newcommand{\wh}{\widehat}
\newcommand{\p}{\partial}
\newcommand{\om}{\omega}
\newcommand{\Om}{\Omega}
\newcommand{\eps}{\varepsilon}
\newcommand{\into}{\hookrightarrow}
\newcommand{\la}{\langle}
\newcommand{\ra}{\rangle}
\newcommand{\dbar}{\overline{\partial}}
\newcommand{\N}{{\mathbb{N}}}
\newcommand{\Z}{{\mathbb{Z}}}
\newcommand{\R}{{\mathbb{R}}}
\newcommand{\C}{{\mathbb{C}}}
\newcommand{\im}{{\rm im }}        % image
\newcommand{\st}{{\rm st}}
\newcommand{\Int}{{\rm Int\,}} %Interior
\renewcommand{\min}{{\rm min}}
\renewcommand{\max}{{\rm max}}
\renewcommand{\Re}{{\rm Re\,}}
\renewcommand{\Im}{{\rm Im\,}}
\newcommand{\Id}{\mathrm {Id}}
\newcommand{\Grauert}{\mathrm{Grauert}}
\newcommand{\MM}{\mathcal{M}}
\newcommand{\OO}{\mathcal{O}}
\newcommand{\PP}{\mathcal{P}}
\newcommand{\RR}{\mathcal{R}}
\newcommand{\HH}{\mathcal{H}}
\def\Op{{\mathcal O}{\it p}\,}
\newcommand{\fW}{{\mathfrak W}}
\newcommand{\pbar}{{\bar\partial}}
\numberwithin{figure}{section}
\begin{document}

%\begin{abstract}
%\end{abstract}

%\tableofcontents
\begin{abstract}
We give in this article necessary and sufficient conditions on the
topology of a compact domain with smooth boundary in $\C^n$, $n\geq
3$, to be isotopic to a rationally or polynomially convex domain.
\end{abstract}

\maketitle

%%%%%%%%%%%%%%%%%%%%%%%%%%%%%%%%%%%%%%%%%%%%%%%%%%%%%%%%%%%%%%%%%%%%%
\section{Introduction}\label{sec:intro}
%%%%%%%%%%%%%%%%%%%%%%%%%%%%%%%%%%%%%%%%%%%%%%%%%%%%%%%%%%%%%%%%%%%%%

\subsection{Polynomial, rational and holomorphic convexity}\label{sec:def}
Recall the following  complex analytic notions of convexity for
domains in $\C^n$. For a compact set $K\subset \C^n$, one defines its
{\it polynomial hull} as 
$$
   \wh K_\PP := \{z\in\C^n\:\Bigl|\;
   |P(z)|\leq\mathop{\max}\limits_{u\in K}|P(u)|\hbox{ for  
  all complex polynomials }P \hbox{ on }\C^n\},
$$
and its {\it rational hull} as
$$
   \wh K_\RR := \{z\in\C^n\;\Bigl|\; |R(z)|\leq\mathop{\max}\limits_{u\in
     K}|R(u)|\hbox{ for all rational functions }R=\frac PQ,\; Q|_K\neq
   0\}. 
$$
Given an open set $U\supset K$, the {\it holomorphic hull of $K$ in
  $U$} is defined as 
$$
   \wh K^U_\HH := \{z\in U\;\Bigl|\;
   |f(z)|\leq\mathop{\max}\limits_{u\in K}|f(u)|\hbox{ for  
  all holomorphic functions }f \hbox{ on } U\}.
$$
%Finally we define its (absolute) holomorphic hull as $\wh
%K_\HH=\bigcap\limits_{U\supset K}\wh K_\HH^U$. 
A compact set $K\subset \C^n$ is called {\it rationally} (resp.~{\it
  polynomially}) {\em convex} if $\wh K_\RR=K$ (resp.~$\wh
K_\PP=K$). 
An open set $U\subset\C^n$ is called {\it holomorphically convex} if
$\wh K^U_\HH$ is compact for all compact sets $K\subset U$. A compact
set $K\subset \C^n$ is called {\it holomorphically convex} if it is
the intersection of its holomorphically convex open neighborhoods. We
have  
$$
   \text{polynomially convex} \Longrightarrow \text{rationally
     convex} \Longrightarrow \text{holomorphically convex}. 
$$
The first implication is obvious, while the second one follows from
the fact that by definition a rationally convex compact set $K$ is an
intersection of {\it bounded rational polyhedra} $\{|R_i|<c_1,
i=1,\dots, N\}$, where the $R_i$ are rational functions, and any
bounded rational polyhedron is clearly holomorphically convex.

%%%%%%%%%%%%%%%%%%%%%%%%%%%%%%
\begin{comment}
The rational convexity of $K$ is equivalent to the following condition:
\begin{itemize}
\item[(P)] for every point $a\in\C^n\setminus K$ there exists a polynomial $P_a$ such that $P_a(a)=0$ and $P_a|_K\neq 0$.
\end{itemize}
Indeed, if $K$ is rationally convex then for every $a\in\C^n\setminus K$ there exists a rational function $\wt R=\frac {\wt P}{\wt Q}$ such that $|\wt R(a)|>\mathop{\max}\limits_K|\wt R|$. Hence,  the rational function $ R=\frac{  P}{ Q}:=R-R(a)$ satisfies the conditions 
$R(a)=0$, $R|_K\neq 0$, and therefore the polynomial $P$ has the same properties.
Conversely, if $P$ satisfies (R) then for a sufficiently small $\eps>0$ the rational function $R:=\frac {1}{P+\eps}$ satisfies $|R(a)|>\mathop{\max}\limits_K |R|$, and hence $K$ is rationally convex.
\begin{remark}\label{rem:1Q}
The above argument implies that in the definition of rational convexity it is sufficient to consider the rational functions of the form $\frac1Q$.
\end{remark}
\end{comment}
%%%%%%%%%%%%%%%%%%%%%%%%

Given a real valued function $\phi: U\to\R$ on an open subset
$U\subset\C^n$, we denote by $d^\C\phi:=d\phi\circ i$ its differential
twisted by multiplication with $i=\sqrt{-1}$ on $\C^n$, 
and we set $\om_\phi:=-dd^\C\phi=2i\p\dbar\phi$. A function $\phi$ is called {\em
$i$-convex} if $\om_\phi(v,iv)>0$
for all $v\neq 0$. A cooriented hypersurface $\Sigma\subset\C^n$ (of
real codimension $1$) is
called {\em $i$-convex} if there
exists an $i$-convex function $\phi$ defined on some neighborhood of
$\Sigma$ such that $\Sigma=\{\phi=c\}$, and $\Sigma$ is cooriented by
a vector field $v$ satisfying $d\phi(v)>0$. 

\begin{remark}  Traditionally $i$-convexity for functions is called
 {\it strict  plurisubharmonicity}, and $i$-convexity for cooriented
 hypersurfaces {\it strict pseudoconvexity}. We prefer to use the term
 $i$-convexity  for smooth functions and hypersurfaces (or more generally $J$-convexity in a manifold $V$
 with a complex  or even an almost complex  structure $J$)  
 not only because it is shorter, but also because it explicitly shows
 the dependence on the ambient complex structure.  Note that we are
 also ``downgrading" the traditional terminology from 
  the theory of functions of several complex variables: $i$-convexity
  corresponds to {\it strict} plurisubharmonicity or pseudoconvexity,
  while for non-strict plurisubharmonicity or pseudoconvexity we will
  use the term {\it weak $i$-convexity}.
\end{remark}

In this paper, by a {\em domain} we will always mean a {\it compact}
manifold  $W$ with
smooth boundary $\p W$, and by a {\em domain in $\C^n$} an embedded domain
$W\subset\C^n$ of real dimension $2n$.  In particular, a domain
$W\subset\C^n$ in our terminology is always a {\it closed subse}t.
According to a theorem of E.~Levi~\cite{Levi10}, any holomorphically 
convex domain $W\subset\C^n$ has {\it weakly} $i$-convex  boundary $\p
W$. The converse statement that 
the interior of any domain in $\C^n$ with weakly
$i$-convex boundary is holomorphically convex is known as the {\em Levi
  problem}. It was proved by K.~Oka~\cite{Oka53},
H.J.~Bremermann~\cite{Bre54}, and F.~Norguet~\cite{Nor54}. In a more
general context of  domains in Stein manifolds, the Levi problem was
resolved by H.~Grauert~\cite{Gra58} in the  i-convex case, and by  F.~Docquier and 
H.~Grauert  in the  weakly i-convex one \cite{DoGra60}.    
  
We call a domain $W\subset\C^n$ {\em $i$-convex} if its boundary is
$i$-convex. Note that any weakly $i$-convex domain in $\C^n$ can be
$C^\infty$-approximated by a slightly smaller $i$-convex one. 

\subsection{Topology of rationally and polynomially convex
  domains}\label{sec:intro-topology} 
We call a function $\phi:W\to\R$ on a domain $W$ {\it defining} if $\p
W$ is a regular level set of $\phi$ and $\phi|_{\p W}=\max_W\phi$. 
Any $i$-convex domain $W\subset \C^n$ admits a defining $i$-convex
function, so in particular it admits a defining Morse function without
critical points of index $>n$ (see e.g.~\cite{CieEli12}). It follows
that any holomorphically,
rationally or polynomially convex domain has the same property. It was
shown in~\cite{Eli90} (see Theorem 1.3.6 there and also~\cite[Theorem
  8.19]{CieEli12}) that for $n\geq 3$, any domain in $\C^n$ with such a
Morse function is smoothly isotopic to an $i$-convex one. 

The first result of this paper states that, for $n\geq 3$, there are no
additional constraints on the topology of rationally convex domains. 

\begin{thm}\label{thm:rational}
A compact domain $W\subset\C^n$, $n\geq 3$, is smoothly isotopic to a
rationally convex domain if and only if it admits a defining Morse
function without critical points of index $>n$. 
\end{thm}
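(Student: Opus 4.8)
The ``only if'' direction is already contained in the discussion preceding the statement: a rationally convex domain is holomorphically convex, hence admits a defining $i$-convex function and in particular a defining Morse function with all critical points of index $\le n$. So only the ``if'' direction needs proof, and I would begin by reducing it to the $i$-convex case. Assuming $W$ carries a defining Morse function without critical points of index $>n$, the cited theorem of \cite{Eli90} (see also \cite[Theorem 8.19]{CieEli12}), which is exactly where the hypothesis $n\ge 3$ first enters, lets me isotope $W$ so that it becomes $i$-convex. Then $W$ is a Stein, hence Weinstein, domain: a defining $i$-convex function $\phi$ makes $\om_\phi=-dd^\C\phi$ a K\"ahler form near $W$, and the associated gradient-like Liouville field retracts $W$ onto its core $\Core(W)$, an isotropic stratified subset of dimension $\le n$ on which $\om_\phi$ vanishes.

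The plan is then to upgrade $i$-convexity to rational convexity through the symplectic geometry of this core, using the criterion of Duval and Sibony: a compact isotropic subset of $\C^n$ with respect to a K\"ahler form admitting a \emph{global} potential on all of $\C^n$ is rationally convex. The decisive point is that this global-potential condition is what separates rational from merely holomorphic convexity; for a fixed $i$-convex embedding one cannot in general extend $\phi$ to a globally strictly plurisubharmonic $\psi$ on $\C^n$, which is precisely why not every $i$-convex domain is rationally convex. The freedom afforded by the allowed smooth isotopy is therefore the essential resource. Concretely, I would aim to isotope $W$, together with a deformation of the pair $(\phi,X_\phi)$, until $\phi$ extends to a globally strictly plurisubharmonic exhaustion $\psi$ of $\C^n$ with $W=\{\psi\le 0\}$ a regular sublevel set. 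Since $\psi=\phi$ near $W$, the core remains isotropic for the global K\"ahler form $\om_\psi=-dd^\C\psi$ with global potential $\psi$, whence Duval--Sibony yields that $\Core(W)$ is rationally convex; a neighborhood argument showing that the sublevel sets of $\psi$ near the core are bounded rational polyhedra then promotes this to rational convexity of $W$ itself.

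To realize the required global potential after isotopy, I would split the core according to the index of the generating critical points. The subcritical strata, of dimension $<n$, are soft and fully flexible for $n\ge 3$: they can be displaced isotropically with room to spare and create no obstruction to extending $\psi$ positively. The delicate part is the top, Lagrangian stratum arising from the index-$n$ critical points, and here I would invoke Weinstein and Lagrangian flexibility in the range $n\ge 3$ to isotope $W$ so that each top-dimensional piece of the core becomes isotropic for a globally extendable potential, without altering the smooth isotopy class of $W$ or destroying embeddedness.

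The hard part will be exactly this treatment of the critical index-$n$ stratum. Making the subcritical skeleton isotropic is essentially an $h$-principle, but the Lagrangian pieces must be made simultaneously isotropic for a \emph{single global} potential $\psi$, while the domain remains embedded in $\C^n$ and smoothly isotopic to the original $W$; in addition, applying the Duval--Sibony criterion to a stratified rather than smooth isotropic set requires controlling the interaction of the strata. This is where $n\ge 3$ is used a second time, furnishing enough room to carry out the Lagrangian isotopies that encode the global-potential condition. Verifying that all of these deformations can be performed within $\C^n$, without producing new topology, is the technical heart of the argument.
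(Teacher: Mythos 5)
There is a genuine gap at the center of your plan, and it is fatal as stated. The condition you propose to achieve by isotopy --- that $\phi$ extend to a globally strictly plurisubharmonic exhaustion $\psi$ of $\C^n$ with $W=\{\psi\le 0\}$ a regular sublevel set --- is not the criterion for rational convexity; it is Oka's criterion for \emph{polynomial} convexity (Criterion~\ref{prop:polynomial}). That condition is strictly stronger and is obstructed homologically: any regular sublevel set of a global $i$-convex exhaustion of $\C^n$ satisfies $H_n(W;G)=0$ for every coefficient group $G$ (Remark~\ref{rem:T}), and this obstruction is invariant under smooth isotopy. Theorem~\ref{thm:rational} makes no such assumption. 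Concretely, let $W$ be a closed tubular neighborhood of the Clifford torus $T^n\subset(\C^n,\om_\st)$, i.e.\ a Grauert tube $D^*T^n$: it admits a defining Morse function without critical points of index $>n$ and is rationally convex (Corollary~\ref{cor:Lagrangian}(b), due to Duval--Sibony), yet $H_n(W)\cong\Z\neq 0$, so \emph{no} smooth isotopy can ever produce the global exhaustion you require. The correct criterion (Criterion~\ref{prop:rational}, Nemirovski's reformulation of Duval--Sibony) asks only that the \emph{form} $-dd^\C\phi$ extend to a K\"ahler form on $\C^n$; a global potential of that extension exists automatically on $\C^n$, but it need not --- and in the example above cannot --- have $W$ as a sublevel set. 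Your argument collapses precisely the distinction between the form-level condition (R) and the function-level Oka condition, which is exactly the distinction between rational and polynomial convexity (compare parts (b) and (c) of Theorem~\ref{thm:Weinstein}).

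Even if one replaces your target by condition (R), the remaining sketch lacks the two ingredients that actually carry the paper's proof. First, the essential use of the isotopy is not a soft repositioning of the core: one endows $W$ with a \emph{flexible} Stein structure in the given homotopy class of almost complex structures (Theorems 13.1 and 13.5 of \cite{CieEli12}) and invokes the $h$-principle for Lagrangian caps \cite{EliMur13} to isotope the embedding to a \emph{symplectic} embedding $(W,\om)\into(\C^n,\om_\st)$; this is where $n\ge 3$ does its real work, and no general-position or ``subcritical softness'' argument can replace it, since the top stratum of the core is Lagrangian and rigid. Second, condition (R) is then achieved not by deforming the core of a fixed domain but by rebuilding the domain handle by handle inside $\C^n$: Propositions~\ref{prop:rational-attaching} and~\ref{prop:rational-attaching-Weinstein}, resting on the K\"ahler potential construction of Proposition~\ref{prop:potential}, show that attaching an isotropic handle preserves the existence of a global K\"ahler extension of $-dd^\C\phi$. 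Your alternative --- applying Duval--Sibony directly to the stratified core and then promoting rational convexity of the core to the domain via ``rational polyhedra'' --- is unsupported at both steps: the cited theorem concerns smooth Lagrangian submanifolds, not stratified isotropic sets, and rational convexity of a compact set does not pass to a prescribed neighborhood of it; it is exactly Criterion~\ref{prop:rational} that bridges this gap, and it takes (R), not your global exhaustion, as input.
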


Our second result gives necessary and sufficient constraints on the
topology of polynomially convex domains. 

\begin{thm}\label{thm:polynomial}
A compact domain $W\subset\C^n$, $n\geq 3$, is smoothly isotopic to a
polynomially convex domain if and only if it satisfies the following
topological condition: 
\begin{itemize}
\item[(T)] $W$ admits a defining Morse
function without critical points of index $>n$, and $H_n(W;G)=0$ for
every abelian group $G$.  
\end{itemize}
\end{thm}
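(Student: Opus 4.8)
The plan is to show that condition (T) is exactly the smooth--topological shadow of the statement ``$W$ is, up to isotopy, a regular sublevel set of an $i$-convex exhaustion of $\C^n$'', and then to exploit the elementary fact that such sublevel sets are automatically polynomially convex. First I would rewrite the homological half of (T) via the universal coefficient theorem: since the Morse condition already forces $H_k(W;\Z)=0$ for $k>n$, the requirement $H_n(W;G)=0$ for every abelian group $G$ is equivalent to the two conditions $H_n(W;\Z)=0$ and $H_{n-1}(W;\Z)$ torsion-free. Geometrically this is precisely the obstruction for the cobordism $(\C^n,W)$ to admit a relative handle decomposition using only handles of index $\le n$.

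For the necessity direction I would argue as follows. If $W$ is polynomially convex, then for each $a\notin W$ a polynomial $P$ separating $a$ from $W$ produces, via $\log|P|$, an $i$-convex function that is negative on $W$ and positive at $a$; a standard smoothing-and-patching construction (adding a small multiple of $|z|^2$ to ensure strictness and exhaustion) then yields an $i$-convex exhaustion $\phi\colon\C^n\to\R$ having $W=\{\phi\le 0\}$ as a sublevel set, after an arbitrarily small shrinking that does not change the homotopy type. Because $\phi$ is $i$-convex, its Hessian is positive on every complex line, so its negative eigenspace contains no complex line and hence has real dimension $\le n$; thus every Morse critical point, in particular every one lying above the level $0$, has index $\le n$. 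This gives $H_{n+1}(\C^n,W;G)=0$ for every $G$, and since $\C^n$ is contractible the long exact sequence of the pair yields $H_n(W;G)\cong H_{n+1}(\C^n,W;G)=0$. Together with the existence of the defining Morse function of index $\le n$ noted in the excerpt, this is exactly (T).

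For sufficiency I would reverse this chain. By Smale's handle calculus in dimension $2n\ge 6$, condition (T) guarantees that the exterior cobordism $\overline{\C^n\setminus\Int W}$, from $\p W$ out to a large sphere, can be built from $\p W$ using only handles of index $\le n$: the vanishing of $H_n(W;\Z)$ removes the index-$(n+1)$ handle that would otherwise be needed to create top homology, while torsion-freeness of $H_{n-1}(W;\Z)$ is exactly what prevents a forced index-$(n+1)$ handle from being required to produce torsion in $H_n(\C^n,W)\cong H_{n-1}(W)$. Next I would upgrade this smooth picture to an honest $i$-convex exhaustion $\phi$ of $\C^n$ with $\{\phi\le 0\}$ smoothly isotopic to $W$, realizing each handle $i$-convexly with isotropic attaching spheres (Legendrian for index $<n$, Lagrangian for index $n$) in the contact boundaries, following the technique of \cite{Eli90} and \cite[Ch.~8]{CieEli12}. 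Finally, for any $i$-convex exhaustion the sublevel set $K=\{\phi\le 0\}$ is polynomially convex: if $z\notin K$ then $\phi(z)>0=\max_K\phi$, so the plurisubharmonic function $\phi$ itself certifies $z\notin\wh K_\PP$, whence $\wh K_\PP=K$.

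The hard part will be the middle step of the sufficiency argument, namely the $i$-convex (not merely smooth) realization of the exhaustion all the way out to the standard structure at infinity. The delicate points are that the attaching spheres of the top-index handles must be made Lagrangian rather than merely isotropic, and that the handle cancellations furnished by (T) must be performed while keeping every critical point of index $\le n$ and without destroying $i$-convexity; controlling the fundamental group during these cancellations is what forces $n\ge 3$. This is also where the contrast with Theorem~\ref{thm:rational} lives: rational convexity only requires filling by \emph{some} K\"ahler form on a neighborhood, whereas polynomial convexity demands the global exhaustion of $\C^n$, and it is precisely the extra cost of closing up at infinity with handles of index $\le n$ that is measured by the homological clause of (T).
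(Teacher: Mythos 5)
Your necessity argument is correct and is essentially the paper's own (Remark~\ref{rem:T}, going back to Andreotti--Narasimhan): Oka's criterion (Criterion~\ref{prop:polynomial}) provides the $i$-convex exhaustion, $i$-convexity bounds the Morse indices by $n$, and the long exact sequence of the pair kills $H_n(W;G)$. The genuine gap is in the sufficiency direction, at precisely the step you flag as ``the hard part'', and the technique you propose there cannot close it. Realizing the exterior handle decomposition $i$-convexly, handle by handle with isotropic/Legendrian attaching spheres in the style of~\cite{Eli90} and~\cite{CieEli12}, would at best produce an $i$-convex domain $\wt W\subset\C^n$, diffeomorphic to $B^{2n}$ and containing an isotopic copy of $W$ as a sublevel set. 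But an $i$-convex domain diffeomorphic to the ball need \emph{not} be polynomially convex (for instance, a thin $i$-convex neighborhood of a totally real disc with nontrivial polynomial hull), so nothing in the handle-by-handle construction lets you extend the $i$-convex function from $\wt W$ to an \emph{exhausting} $i$-convex function on all of $\C^n$ without further critical points --- and Criterion~\ref{prop:polynomial} demands exactly such a global exhaustion. ``Closing up at infinity'' is not a matter of attaching more handles of index $\le n$; it is an extra symplectic condition, made precise in Theorem~\ref{thm:Weinstein}(c): the Weinstein structure induced on $W$ must extend to a Weinstein structure on $\C^n$ lying in the standard class $\fW(\C^n,i)$. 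Your outline contains no mechanism for verifying this, and no purely local $i$-convex handle construction can supply one.

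The missing idea is \emph{flexibility}. The paper first replaces the given Morse data on $W$ by a flexible Stein/Weinstein structure (Theorems 13.1 and 13.5 of~\cite{CieEli12}), uses the Lagrangian caps $h$-principle~\cite{EliMur13} to obtain a symplectic embedding $(W,\om)\into(\C^n,\om_\st)$, extends the Weinstein structure flexibly over the exterior handles furnished by Lemma~\ref{lm:top}, and only then invokes the \emph{uniqueness} $h$-principle for flexible Weinstein structures (\cite[Theorem 14.5]{CieEli12}) to identify the extension, up to Weinstein homotopy, with the standard structure on $\C^n$; Theorem~\ref{thm:Weinstein}(c), via \cite[Theorem 15.3]{CieEli12} and Moser stability, then converts this homotopy into an isotopy onto a sublevel set of a global $i$-convex exhaustion. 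A variant closest to your outline extends the Morse function by Lemma~\ref{lm:top} and then applies the flexibility of the standard (subcritical) Stein structure on $\C^n$ itself, \cite[Theorem 15.11]{CieEli12}, to find an ambient diffeomorphism $h$ and a reparametrization $\alpha$ making $\alpha\circ\wt\psi\circ h$ globally $i$-convex, so that $h^{-1}(W)$ is polynomially convex and isotopic to $W$. Either way, a global $h$-principle is indispensable and is exactly what your sketch omits. A minor additional point: your final step (``$\phi$ itself certifies $z\notin\wh K_\PP$'') uses that the polynomial hull coincides with the plurisubharmonic hull, which is not the definition but a classical theorem of Oka --- in effect Criterion~\ref{prop:polynomial} again --- so it should be cited as such rather than treated as elementary.
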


The ``only if" part is well known and due to A.~Andreotti and
R.~Narasimhan \cite{AndNar62}, see also~\cite{For94} or
Remark~\ref{rem:T} below. Note that, in view of the universal coefficient
theorem, condition (T) is equivalent to the condition
\begin{itemize}
\item[(T')] $W$ admits a defining Morse function without critical
  points of index $>n$, $H_n(W )=0$, and $H_{n-1}(W)$ has no torsion.  
\end{itemize}

Theorems~\ref{thm:rational} and~\ref{thm:polynomial} are consequences of
the more precise Theorem~\ref{thm:flexible} below. 
Further analysis of condition (T) yields

\begin{prop}\label{prop:top}
(a) If $W$ is simply connected, then condition (T) is equivalent
to the existence  of a defining Morse function without critical points
of index $\geq n$. 

(b) For any $n\geq 3$ there exists a (non-simply connected) domain $W$
satisfying condition (T) with $\pi_n(W,\p W)\neq 0$.  
In particular, $W$ does not admit a defining function without critical
points of index $\geq n$.
\end{prop}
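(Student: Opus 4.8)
The plan is to translate everything into handle theory. A defining Morse function $\phi$ on the $2n$-manifold $W$ gives, through its sublevel sets, a handle decomposition in which a critical point of index $k$ contributes a handle of dimension $k$; the clause in (T) that there are no critical points of index $>n$ means all handles have index $\le n$, so $W$ is homotopy equivalent to a CW complex of dimension $\le n$. Passing to $-\phi$ (whose minimum is $\partial W$) produces the dual decomposition, in which $W$ is built from a collar of $\partial W$ by handles of index $2n-k\ge n$; hence $(W,\partial W)$ is $(n-1)$-connected, and if moreover all indices of $\phi$ are $\le n-1$ then the dual indices are $\ge n+1$, so $(W,\partial W)$ is $n$-connected. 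In particular, if $\pi_n(W,\partial W)\neq 0$ then no defining function without critical points of index $\ge n$ can exist, which is the last sentence of (b).

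For (a), the "if" direction is immediate: a defining function with indices $\le n-1$ exhibits $W$ as homotopy equivalent to an $(n-1)$-complex, so $H_k(W;G)=0$ for all $k\ge n$ and all $G$, and (T) holds. For "only if" I would start from a handle decomposition with handles of index $\le n$ and cancel all $n$-handles. Since $W$ is simply connected, the handle chain complex is the ordinary cellular complex over $\Z$; because there are no $(n+1)$-handles and $H_n(W)=0$, the map $\partial_n\colon C_n\to C_{n-1}$ is injective, and because $H_{n-1}(W)$ is torsion-free (the equivalent form (T$'$)) its image is a direct summand, so in suitable bases $\partial_n$ is a unit diagonal block. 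I would realize the base change by handle slides among the $(n-1)$- and $n$-handles, reducing to the case where each $n$-handle meets exactly one $(n-1)$-handle with algebraic intersection $\pm 1$ in the separating level set $M^{2n-1}$. The main obstacle is the geometric cancellation: one must upgrade this algebraic intersection to a single transverse geometric one and apply the handle-cancellation lemma. This is precisely the Whitney trick in the complementary dimensions $(n-1)+n=\dim M$, which is available because $M$ is simply connected and $\dim M=2n-1\ge 5$, i.e.\ $n\ge 3$. Cancelling each pair removes all $n$-handles without introducing handles of higher index, leaving a defining function with indices $\le n-1$.

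For (b) I would build $W$ explicitly as a $2n$-dimensional handlebody. Attach to a $0$-handle one $1$-handle, so that $W_1\simeq S^1$ with $\pi_1=\Z=\langle t\rangle$, and then one $(n-1)$-handle attached trivially, so that $W_{n-1}\simeq S^1\vee S^{n-1}$. For $n\ge 3$ the universal cover of $S^1\vee S^{n-1}$ is homotopy equivalent to $\bigvee_{k\in\Z}S^{n-1}$, whence $\pi_{n-1}(W_{n-1})\cong\Z[t,t^{-1}]$ as a $\Z[t,t^{-1}]$-module, generated by the core sphere $\sigma$. Finally attach one $n$-handle along an embedded framed sphere representing $\alpha=(2-t)\sigma$; such an embedding exists by general position since $n\ge 3$. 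The result is a compact $2n$-manifold $W$ with $\pi_1(W)=\Z$ and a defining Morse function with handles of index $0,1,n-1,n$, so the first clause of (T) holds. In the integral cellular complex $\partial_n$ is multiplication by the augmentation $(2-t)|_{t=1}=1$, hence an isomorphism $C_n\to C_{n-1}$; therefore $H_{n-1}(W)=H_n(W)=0$, $W$ has the integral homology of $S^1$, and so $H_n(W;G)=0$ for every $G$ and (T) holds.

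It remains to show $\pi_n(W,\partial W)\neq 0$. Here I would use that $(W,\partial W)$ is $(n-1)$-connected together with the relative Hurewicz theorem applied to the universal cover, identifying $\pi_n(W,\partial W)\cong H_n(\widetilde W,\widetilde{\partial W})$ as $\Z[t,t^{-1}]$-modules (one checks that $\pi_1(\partial W)\to\pi_1(W)$ is an isomorphism, so $\widetilde{\partial W}$ is simply connected). The complex of $\widetilde W$ has $\partial_n$ equal to multiplication by $2-t$ on $\Z[t,t^{-1}]$, and by equivariant Poincar\'e--Lefschetz duality $C_*(\widetilde W,\widetilde{\partial W})$ is the conjugate dual of $C_{2n-*}(\widetilde W)$; thus $H_n(\widetilde W,\widetilde{\partial W})$ is the cokernel of multiplication by $2-t^{-1}$, namely $\Z[t,t^{-1}]/(2t-1)\cong\Z[1/2]\neq 0$. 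As a consistency check, the coinvariants of this module vanish, matching $H_n(W,\partial W)\cong H^n(W)=0$ forced by (T). I expect the combination of relative Hurewicz over the universal cover with equivariant duality — the identification of $\pi_n(W,\partial W)$ with the cokernel of the conjugate-dual boundary map — to be the most delicate step; everything else is routine handle calculus.
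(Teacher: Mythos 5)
Your argument is essentially the paper's. Part (a) is Smale's theorem, which the paper simply cites as \cite{Sma62} rather than re-deriving the basis changes, handle slides and Whitney-trick cancellation, but the content is identical (and your dimension count $2n-1\geq 5$, $n\geq 3$, matches the hypothesis needed there). For part (b) you build the very same manifold --- handles of index $0,1,n-1,n$, with the $n$-handle attached along $(2-t)\sigma$, which differs from the paper's class $(2t-1)\sigma$ only by the unit $-t$ --- and you compute $\pi_n(W,\p W)$ with the same two tools: the relative Hurewicz theorem applied to the universal cover (using that $(W,\p W)$ is $(n-1)$-connected and that $\pi_1(\p W)\to\pi_1(W)$ is an isomorphism), and equivariant duality identifying the relevant boundary operator with the conjugate transpose of $\partial_n$. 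The only structural difference is bookkeeping: the paper runs the long exact sequence of the triple $(\wt W,\wt W_1,\p\wt W)$ and the five lemma, invoking the duality lemma of \cite{Mil66}, $\S 10$, to compute the single boundary map, whereas you dualize the whole chain complex of $\wt W$ at once; both give $\pi_n(W,\p W)\cong \Z[t,t^{-1}]/\la 2t^{-1}-1\ra\cong\Z[1/2]\neq 0$.

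One step in your write-up is genuinely too quick: attaching the $n$-handle requires the attaching $(n-1)$-sphere in the $(2n-1)$-dimensional boundary to be embedded \emph{with trivial normal bundle}, and general position only provides the embedding. The normal bundle has rank $n$ over $S^{n-1}$, so it is classified by a potentially nonzero element of $\pi_{n-2}O(n)$; a framing does not come for free. The paper closes this gap by noting that the normal bundle is stably trivial (since $W_0$ embeds in $\R^{2n}$ as a codimension-zero submanifold, its tangent bundle restricted to the sphere is trivial) and that the stabilization map $\pi_{n-2}O(n)\to\pi_{n-2}O$ is an isomorphism for $n\geq 3$, so stable triviality implies triviality. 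With that observation inserted, your proof is complete.
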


This paper was  motivated by the questions raised by
S.~Nemirovski whether every polynomially convex domain in $\C^n$ is 
{\em subcritical}, i.e., it admits a defining $i$-convex Morse function
without critical points of index $\geq n$, and whether there are any
additional constraints on the topology of rationally convex domains
besides the fact that they admit defining Morse functions without critical
points of index $>n$. Theorem \ref{thm:rational} provides a complete
answer to the latter question in the case $n\geq 3$, while 
Theorem~\ref{thm:polynomial} together with Proposition~\ref{prop:top}
(b) show that the answer to the former question is 
in general negative. In the simply connected case,
Proposition~\ref{prop:top} (a) provides a defining Morse function
without critical points of index $\geq n$, but we do not know whether
there exists such a function which is $i$-convex. See also the
discussion after Theorem~\ref{thm:flexible} below.

\subsection{Symplectic topology of rationally and polynomially convex
  domains}\label{sec:intro-sym-topology}
One may ask which {\em deformation classes} of Stein domains can be
realized as polynomially or rationally convex domains in $\C^n$. Here
by a {\em Stein domain} we mean a domain $W$ with an integrable
complex structure $J$ (making $W$ a complex manifold with boundary)
which admits a defining $J$-convex function $\phi:W\to\R$.
Two Stein domains $(W,J)$ an $(W',J')$ are called {\em deformation
  equivalent} if there exists a diffeomorphism $f:W\to W'$ such that
$f^*J'$ is Stein homotopic to $J$; see~\cite{CieEli12}.  We call a
diffeomorphism $f:W\to W'$ with this property a {\it deformation
  equivalence}. 

Recall from~\cite{EliGro91,CieEli12} that a {\em Weinstein domain
structure} on a domain $W$ is a triple $(\om,X,\phi)$ consisting of a
symplectic form $\om$ on $W$, a defining Morse function $\phi:W\to\R$,
and a vector field $X$ on $W$ which is Liouville for $\om$
(i.e.~$L_X\omega=\omega$) and gradient-like for $\phi$. 
 A {\em Weinstein homotopy} on a domain $W$ is a smooth 1-parameter
family $(\om_t,X_t,\phi_t)$, $t\in[0,1]$, of triples satisfying all
the conditions on Weinstein domain structures, except that we allow
the family $\phi_t$ to have birth-death critical points.   

Any defining $J$-convex Morse function $\phi$ on a Stein domain
$(W,J)$ induces a Weinstein structure
$\fW(W,J,\phi):=(\om_\phi,X_\phi,\phi)$ on $W$ where
$\om_\phi=-dd^{\C}\phi$, and $X_\phi=\nabla_\phi\phi$ is the gradient
of $\phi$ with respect to the K\"ahler metric
$\om_\phi(\cdot,J\cdot)$; see~\cite{CieEli12}. Since the space of
defining $J$-convex functions is contractible, different choices of
$\phi$ lead to homotopic Weinstein structures. Hence any Stein domain
$(W,J)$ has a canonically associated homotopy class $\fW(W,J)$ of
Weinstein structures on $W$. It is shown in \cite{CieEli12} that two
Stein domains $(W,J_0)$ and $(W,J_1)$ are Stein homotopic if and only
if $\fW(W,J_0)=\fW(W,J_1)$. 

A {\em Weinstein manifold structure} on an open manifold $V$ is a
triple $(\om,X,\phi)$ consisting of a symplectic form $\om$ on $V$, an
exhausting Morse function $\phi:W\to\R$, and a vector field $X$ on $V$
which is Liouville for $\om$, gradient-like for $\phi$, and complete
(i.e., its flow exists for all times).  
%$(W,\om,X,\phi)$ is then called a {\em Weinstein manifold}. 
As in the Stein domain case, one can associate to a Stein manifold
$(V,J)$ and an exhausting $J$-convex function $\phi:V\to\R$ the
Weinstein structure
$\fW(V,J,\phi):=(\om_\phi=-dd^\C\phi,X_\phi=\nabla_\phi\phi,\phi)$
provided that $X_\phi$ is complete, which can always be achieved by
composing $\phi$ with a sufficiently convex function $\R\to\R$
(see~\cite[Section 11.5]{CieEli12}). By the {\em standard Weinstein
  structure} on $\C^n$ we mean the structure
$\fW_\st=(\om_\st,X_\st,\phi_\st):=\fW(\C^n,i,\frac{|z|^2}4)$. 
 
\begin{theorem}\label{thm:Weinstein} 
Let $(W,J)$ be a Stein domain, $\fW(W,J)$ the associated homotopy
class of Weinstein domain structures on $W$, and $f:W\into\C^n$ a
smooth embedding. Then $f$ is isotopic to a deformation equivalence
onto 
\begin{itemize}
\item[(a)] an $i$-convex domain if and only if the induced complex
  structure $f^*i$ is homotopic to $J$ through almost complex structures;
\item[(b)] a rationally convex domain if and only if in addition to
  (a) there is a representative $(\om,X,\phi)\in\fW(W,J)$ such that
  $f$ is isotopic to a symplectic embedding $\wt
  f:(W,\om)\into(\C^n,\om_\st)$;     
\item[(c)] a polynomially convex domain if and only if in addition to
  (a) and (b) the push-forward Weinstein structure $(\om_\st,\wt f_*X
  ,\phi\circ \wt f^{-1})$ extends to a Weinstein structure $(\wt\om
  ,\wt X,\wt\phi)\in\fW(\C^n,i)$ on the whole $\C^n$.   
\end{itemize}
\end{theorem}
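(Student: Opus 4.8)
The plan is to prove the three equivalences in Theorem~\ref{thm:Weinstein} by a chain of reductions, treating (a), (b), (c) as successively stronger conditions and using the complex-analytic notions of convexity together with symplectic flexibility. The main analytic inputs are the characterizations of $i$-convexity via defining $J$-convex functions, the fact (already recorded in the excerpt) that a Stein domain carries a canonical Weinstein homotopy class $\fW(W,J)$ with the property that Stein homotopy is equivalent to equality of these classes, and the Oka--Grauert principle relating homotopy classes of almost complex structures to genuine holomorphic data.

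\textbf{Part (a).} First I would handle the $i$-convex case. The ``only if'' direction is essentially tautological: if $f$ is isotopic to a deformation equivalence onto an $i$-convex domain $W'\subset\C^n$, then $W'$ inherits the ambient complex structure $i$, and the pulled-back structure $f^*i$ is by construction homotopic through almost complex structures to the complex structure $J$ on $W$ (deformation equivalence means $f^*J'$ is Stein homotopic to $J$, and $J'=i|_{W'}$). For the ``if'' direction I would invoke the existence theorem for $i$-convex embeddings quoted in \S\ref{sec:intro-topology}: when $n\geq 3$, a domain admitting a defining Morse function without critical points of index $>n$ is smoothly isotopic to an $i$-convex domain, and the homotopy condition on almost complex structures is exactly what upgrades this to a deformation equivalence (via the Stein existence/uniqueness results of \cite{CieEli12}). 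So the complex-geometric content of (a) is the rigidity/flexibility dichotomy: the index condition is automatic from $J$-convexity, and the almost complex homotopy class is the only remaining invariant.

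\textbf{Parts (b) and (c).} These are the symplectic refinements and where the real work lies. For (b) the key principle is that rational convexity is detected symplectically: a result going back to Duval--Sibony and Gayet (to be cited in the full proof) says that an $i$-convex (more generally, totally real or isotropic) object is rationally convex precisely when it bounds, or embeds compatibly with, the standard symplectic/K\"ahler form $\om_\st$ on $\C^n$. Thus I would show that after the deformation-equivalence of part (a) one can arrange the embedding to be symplectic for some representative $(\om,X,\phi)\in\fW(W,J)$ exactly when the domain can be deformed to a rationally convex one; the ``only if'' follows because a rationally convex domain already carries a K\"ahler form making the inclusion symplectic, and the ``if'' uses an isotopy of symplectic embeddings (Gromov--McDuff type) combined with the fact that $\om_\st$ is exact. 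For (c) the additional requirement is that the Liouville/Weinstein structure extend over all of $\C^n$ to a representative of $\fW(\C^n,i)=\fW_\st$: polynomial convexity is the global analogue of rational convexity, where the separating rational functions become global polynomials, and this globality translates precisely into the extendability of the Weinstein structure to the standard one on the whole of $\C^n$. I would prove ``only if'' by restricting the standard polynomial exhaustion, and ``if'' by running the Weinstein homotopy extension so that the extended gradient flow fills $\C^n$, then applying the Stein realization of abstract Weinstein structures to recover genuine polynomial convexity.

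\textbf{Main obstacle.} The hard part will be the ``if'' direction of (c): turning an \emph{abstract} extension of the Weinstein structure to $\fW(\C^n,i)$ into an honest \emph{polynomially} convex embedding. This requires converting the formal symplectic extension into a global $i$-convex exhaustion of $\C^n$ that restricts to the given defining function on $f(W)$, and then using the equivalence between polynomial convexity and the existence of such a global plurisubharmonic exhaustion (the Andreotti--Narasimhan circle of ideas referenced for the ``only if'' part of Theorem~\ref{thm:polynomial}). The delicate point is controlling the behavior at infinity: one must ensure the extended Liouville field is complete and that no critical points of index $>n$ are created during the homotopy, so that the resulting exhaustion genuinely certifies polynomial rather than merely rational convexity. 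I expect this to reduce, via the deformation results of \cite{CieEli12}, to the uniqueness statement that any Weinstein structure in the class $\fW_\st$ is Weinstein-homotopic to the standard one, which is where the flexibility package (the $h$-principle for subcritical isotropic embeddings and the handle-attachment formalism) does the decisive work.
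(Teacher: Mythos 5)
Your treatment of (a) and of the ``only if'' halves of (b) and (c) is broadly consistent with the paper's proof: there, (a) is a one-line citation of the Ambient Stein Existence Theorem \cite[Theorem 13.4]{CieEli12}, and the ``only if'' directions follow from Criterion~\ref{prop:rational} (resp.\ Criterion~\ref{prop:polynomial}) together with Lemma~\ref{lem:infty} and Moser's stability theorem. Note that your sketch omits the Lemma~\ref{lem:infty}--plus--Moser step, which is what converts the K\"ahler form (or exhausting function) supplied by the criterion into a symplectic embedding with respect to $\om_\st$ rather than with respect to some other K\"ahler form; this omission is routine to fill. The genuine gap is in the ``if'' direction of (b), which is the technical heart of the theorem: ``an isotopy of symplectic embeddings (Gromov--McDuff type) combined with the fact that $\om_\st$ is exact'' is not a mechanism that produces rational convexity. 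What the paper actually does is push the Weinstein structure $(\om,X,\phi)$ forward under the symplectic embedding and induct over the critical levels of $\phi$: at each stage one has an $i$-convex sublevel set satisfying condition (R) (i.e.\ $-dd^\C\phi$ extends to a global K\"ahler form), and Proposition~\ref{prop:rational-attaching-Weinstein} shows that the isotropic stable disc of the next critical point can be attached so that condition (R) persists. That proposition rests in turn on the real-analytic K\"ahler potential construction (Proposition~\ref{prop:potential}/Lemma~\ref{lem:potential}) and on the H\"ormander--Wermer $\dbar$-estimates in Lemma~\ref{lem:extension}. Nothing in your proposal plays this role, and without it there is no way to certify that the resulting domain satisfies the Duval--Sibony/Nemirovski criterion.

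A secondary misdirection concerns (c): you place the ``decisive work'' on the flexibility package (uniqueness of flexible Weinstein structures, $h$-principles for subcritical/isotropic embeddings), but Theorem~\ref{thm:Weinstein} uses no flexibility at all --- flexibility enters only in Theorem~\ref{thm:flexible}, where it serves to \emph{verify} the hypotheses of (b) and (c) via the Lagrangian-caps $h$-principle. In (c), the homotopy of the extended structure $(\wt\om,\wt X,\wt\phi)$ to the standard one is the \emph{hypothesis} (membership in $\fW(\C^n,i)$), not something to be proved. What is needed instead is \cite[Theorem 15.3]{CieEli12}, which converts that Weinstein homotopy into an ambient diffeotopy $h_t$ of $\C^n$ making the functions $i$-convex, so that $h_1\bigl(f(W)\bigr)$ becomes a sublevel set of an exhausting $i$-convex function and Criterion~\ref{prop:polynomial} applies; the deformation-equivalence conclusion then comes from \cite[Theorem 15.2]{CieEli12}. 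Your worry about creating critical points of index $>n$ along the way is moot, since Oka's criterion requires only an exhausting $i$-convex function, and $i$-convexity automatically bounds the Morse indices.
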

 
 \begin{remark}
The proof of Theorem~\ref{thm:Weinstein} shows that the Weinstein
structure $(\wt\om,\wt X,\wt\phi)$ in (c) can be chosen with 
symplectic form $\wt\om=\om_\st$, standard at infinity, and homotopic
to the standard Weinstein structure via a homotopy fixed at infinity
and with fixed symplectic form.  
\end{remark}

A class of Stein domains  where conditions (b) and (c)  hold  are the {\em
flexible} Stein domains defined in~\cite{CieEli12}; we refer the
reader to there for their definition and properties. Let us just
mention that every subcritical Stein domain 
%(i.e., one admitting
%a defining $J$-convex function without critical points of index $n$)
is flexible, and every domain which admits a Stein structure also
admits a flexible one which is unique (in a given homotopy class of
almost complex structures) up to Stein homotopy.  
%A Stein domain $(W,J)$ of dimension $2n$ is called {\em subcritical}
%if it admits a defining $J$-convex function without critical points of
%index $n$. Similarly, a Weinstein domain $(W,\om,J,\phi)$ is called
%subcritical if the function $\phi$ has no critical points of index
%$n$. A larger class of {\em flexible} Weinstein domains was defined
%in~\cite[Section 11.8]{CieEli12}, and we refer the reader to there for
%the definition and properties of flexible Weinstein domains. Let us
%just mention that every subcritical Weinstein domain is flexible, and
%every domain which admits a Weinstein structure also admits a flexible
%one which is unique up to homotopy. A Stein
%domain $(W,J,\phi)$ is called flexible if the corresponding Weinstein
%domain $\fW(W,J,\phi)$ is. 
For flexible Stein domains, Theorem~\ref{thm:Weinstein} implies
the following refinement of Theorems~\ref{thm:rational} 
and~\ref{thm:polynomial}.   

\begin{thm}\label{thm:flexible}
Let $(W,J)$ be a {\em flexible} Stein domain of complex dimension
$n\geq 3$, and $f:W\into\C^n$ a smooth embedding such that
$f^*i$ is homotopic to $J$ through almost complex structures. 
Then $(W,J)$ is deformation equivalent to a rationally convex
domain in $\C^n$. More precisely, $f$ is smoothly isotopic to an
embedding $g:W\into\C^n$ such that $g(W)\subset\C^n$ is rationally
convex, and $g^*i$ is Stein homotopic to $J$.  

If in addition $H_n(W;G)=0$ for every abelian group $G$,
then $g(W)$ can be made polynomially convex. 
\end{thm}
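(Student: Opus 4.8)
The plan is to derive both assertions from Theorem~\ref{thm:Weinstein} by checking its conditions (a)--(c) for a flexible Stein domain. Condition (a) is precisely our hypothesis that $f^*i$ is homotopic to $J$, so Theorem~\ref{thm:Weinstein}(a) already yields an isotopy of $f$ to a deformation equivalence onto an $i$-convex domain. It then remains to promote this to rational convexity---and, under the extra hypothesis, to polynomial convexity---by establishing conditions (b) and (c); once (b) holds, the embedding $g$ together with the conclusion that $g^*i$ is Stein homotopic to $J$ are read off directly from Theorem~\ref{thm:Weinstein}(b).

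For the rational statement I would verify condition (b): that some representative $(\om,X,\phi)\in\fW(W,J)$ admits a symplectic embedding $\wt f:(W,\om)\into(\C^n,\om_\st)$ isotopic to $f$. The formal data required for such an embedding is a homotopy of $df$ through fiberwise injective bundle maps to a $\C$-linear monomorphism $(TW,J)\to(T\C^n,i)$, which is automatically symplectic for the associated K\"ahler forms; this is exactly what the homotopy $f^*i\simeq J$ supplies. Flexibility of $(W,J)$ is what converts this formal solution into a genuine one: by the h-principle for flexible Weinstein domains of~\cite{CieEli12}, the formal symplectic embedding can be $C^0$-approximated by an honest symplectic embedding $\wt f$ isotopic to $f$, after replacing $\phi$ by a homotopic defining function within the class $\fW(W,J)$. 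Feeding $\wt f$ into Theorem~\ref{thm:Weinstein}(b) then produces the rationally convex domain $g(W)$.

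For the polynomial statement, assuming now $H_n(W;G)=0$ for every abelian group $G$, I would verify condition (c): that the push-forward structure $(\om_\st,\wt f_*X,\phi\circ\wt f^{-1})$ on $\wt f(W)$ extends to a Weinstein structure on all of $\C^n$ lying in the class $\fW(\C^n,i)$. I would realize this extension as a Weinstein structure on the complementary cobordism $\C^n\setminus\Int\wt f(W)$, running from $\p\wt f(W)$ out to the standard contact sphere at infinity. Since the standard structure on $\C^n$ is subcritical while $W$ may carry handles of the top index $n$, the essential point is to cancel these index-$n$ handles against index-$(n+1)$ handles placed in the complement, so that the resulting total structure is subcritical and hence homotopic to $\fW_\st$. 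The hypothesis $H_n(W;G)=0$, equivalently $H_n(W)=0$ with $H_{n-1}(W)$ torsion-free, is exactly the obstruction-vanishing that makes such a cancellation topologically available; flexibility then lets me realize it symplectically by rendering the relevant Legendrian attaching spheres loose and invoking the Weinstein handle-cancellation h-principle of~\cite{CieEli12}. Theorem~\ref{thm:Weinstein}(c) finally upgrades $g(W)$ to a polynomially convex domain.

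The step I expect to be the main obstacle is this extension in condition (c): one must build the complementary Weinstein cobordism so that it simultaneously matches the push-forward structure along $\p\wt f(W)$, terminates in the standard structure at infinity, and cancels every index-$n$ handle. Turning the purely homological input $H_n(W;G)=0$ into an explicit, flexibility-enabled sequence of Weinstein handle moves achieving all three requirements at once is the heart of the matter, and it is here that both the looseness provided by flexibility and the torsion-freeness of $H_{n-1}(W)$ become indispensable.
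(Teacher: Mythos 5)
Your rational-convexity argument is structurally the same as the paper's: reduce to condition (b) of Theorem~\ref{thm:Weinstein}, observe that the hypothesis $f^*i\simeq J$ supplies exactly the formal data for an equidimensional symplectic embedding, and use flexibility to convert the formal solution into a genuine symplectic embedding $\wt f:(W,\om)\into(\C^n,\om_\st)$ isotopic to $f$. One correction, though: the h-principle you need here is \emph{not} in~\cite{CieEli12}. A codimension-zero symplectic embedding theorem for flexible Weinstein domains is precisely the content of the Lagrangian caps paper, \cite[Corollary 6.3]{EliMur13}, and it is the essential new ingredient of this theorem; the book~\cite{CieEli12} contains existence and uniqueness h-principles for flexible Weinstein \emph{structures}, but no such embedding result. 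With the correct citation, this half of your proof is the paper's proof.

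The polynomial part, however, contains a genuine gap. You propose to verify condition (c) by cancelling the index-$n$ handles of $W$ against index-$(n+1)$ handles placed in the complement, so that the total structure on $\C^n$ becomes subcritical. This mechanism is not available: a Weinstein structure on a $2n$-dimensional manifold admits no handles of index $>n$, so there are no index-$(n+1)$ Weinstein handles to cancel against; and the extension demanded by Theorem~\ref{thm:Weinstein}(c) must \emph{restrict} to the push-forward structure $(\om_\st,\wt f_*X,\phi\circ\wt f^{-1})$ on $\wt f(W)$, so the index-$n$ critical points of $\phi$ persist in any admissible extension and the total structure cannot be made subcritical unless $\phi$ already was. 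Indeed, the cancellation you describe is not even available smoothly in general: Proposition~\ref{prop:top}(b) exhibits domains satisfying condition (T) with $\pi_n(W,\p W)\neq 0$, which admit no defining function without index-$n$ critical points. What the hypothesis $H_n(W;G)=0$ actually provides is Lemma~\ref{lm:top}: the defining function extends to a Morse function on all of $\C^n$ whose \emph{new} critical points (in the complement) have index $\leq n$, standard at infinity---handles are attached outside $W$, nothing is cancelled inside. The paper then realizes this extension as a \emph{flexible} Weinstein structure on $\C^n$ via the existence theorem \cite[Theorem 13.1]{CieEli12}, and identifies it with $\fW_\st$ up to Weinstein homotopy via the \emph{uniqueness} theorem for flexible Weinstein structures \cite[Theorem 14.5]{CieEli12}---not via subcriticality. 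Your proposal is missing both of these steps and rests on a handle move that cannot be performed.
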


Theorems~\ref{thm:rational} and~\ref{thm:polynomial} follow directly
from Theorem~\ref{thm:flexible} and Theorems 13.1 and 13.5
in~\cite{CieEli12} which assert that, given a defining Morse
function $\phi:W\to\R$ on a domain $W$ of dimension $2n\geq 6$ without
critical points of index $>n$, any almost complex structure $J$ on $W$
is homotopic to an integrable complex structure $\wt J$ for which the
function $\phi$ (after composition with a convex increasing
function $\R\to\R$) is $\wt J$-convex and such that the Stein
structure $(\wt J,\phi)$ is flexible. 
\medskip

We conjecture that polynomial convexity in complex dimension $\geq 3$
implies flexibility. Note that, by~\cite[Theorem 15.11]{CieEli12}, this
conjecture would imply that if a polynomially convex domain
$W\subset\C^n$, $n\geq 3$, admits a defining Morse function without
critical points of index $\geq n$, then it is subcritical.  

For rational convexity, flexibility is not necessary. This follows,
for example, from another corollary of Theorem~\ref{thm:Weinstein}
which we now describe. Let $D^*L$ denote the unit cotangent disc bundle
of a closed $n$-dimensional manifold $L$ (with respect to some
Riemannian metric on $L$). By a theorem of Grauert~\cite{Gra58},
$D^*L$ carries a Stein structure $J_\Grauert$ which admits a defining
$J_\Grauert$-convex function $\phi_\Grauert$ whose Morse-Bott critical
point locus is the zero section $L$. The Stein domain
$(D^*L,J_\Grauert)$ is called a {\it Grauert tube} of $L$, and any two
Grauert tubes of $L$ are Stein homotopic through Grauert tubes. 
Note that the corresponding Weinstein structure
$\fW(D^*L,J_\Grauert,\phi_\Grauert)$ has the zero section $L$ as a
Lagrangian submanifold. 

\begin{cor}\label{cor:Lagrangian}
A Grauert tube $(D^*L,J_\Grauert)$ of a closed $n$-dimensional
manifold~$L$ 
\begin{itemize}
\item[(a)] is deformation equivalent to an $i$-convex domain in $\C^n$  
  if and only if $L$ admits a totally real
  embedding into $(\C^n,i)$;
\item[(b)] is deformation equivalent to a rationally convex
  domain in $\C^n$ if and only if $L$ admits a Lagrangian embedding
  into $(\C^n,\om_\st)$;
\item[(c)] is not deformation equivalent to a polynomially convex
  domain in $\C^n$.
\end{itemize}
\end{cor}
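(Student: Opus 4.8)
The plan is to deduce Corollary~\ref{cor:Lagrangian} from Theorem~\ref{thm:Weinstein} by checking, for the Grauert tube $(D^*L,J_\Grauert)$, each of the three conditions (a), (b), (c) in terms of the geometry of the zero section $L$. The guiding observation is that the canonically associated Weinstein structure $\fW(D^*L,J_\Grauert,\phi_\Grauert)$ is (homotopic to) the standard cotangent Weinstein structure on the unit disc cotangent bundle, for which the zero section $L$ is an exact Lagrangian submanifold and the core of the Weinstein domain. Thus the symplectic embeddings of $(D^*L,\om)$ into $(\C^n,\om_\st)$ appearing in Theorem~\ref{thm:Weinstein}(b) correspond, after restriction to $L$, exactly to Lagrangian embeddings of $L$, and similarly the totally real embeddings of $L$ control the complex-structure condition in (a).

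For part~(a), I would argue that $f^*i$ is homotopic to $J_\Grauert$ through almost complex structures \emph{if and only if} $L$ admits a totally real embedding into $\C^n$. The forward direction is immediate: an $i$-convex realization restricts on the Lagrangian/totally-real core $L$ to a totally real embedding. For the converse, a totally real embedding $j:L\into\C^n$ extends to an embedding of a neighborhood of the zero section, i.e.\ of $D^*L$ for a small enough metric, whose induced complex structure is homotopic to $J_\Grauert$ (since both restrict to give $L$ totally real, and the space of complex structures making $L$ totally real and compatible with the given data is connected); then Theorem~\ref{thm:Weinstein}(a) applies. For part~(b), I would use that a Lagrangian embedding $L\into(\C^n,\om_\st)$ extends, by the Weinstein Lagrangian neighborhood theorem, to a symplectic embedding of $(D^*L,\om)$ where $\om$ is the canonical symplectic form representing $\fW(D^*L,J_\Grauert)$; conversely a symplectic embedding $\wt f$ as in (b) restricts on the core $L$ to a Lagrangian embedding. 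Combined with (a) (a Lagrangian embedding is in particular totally real, giving the homotopy-of-almost-complex-structures hypothesis), this yields the equivalence with rational convexity.

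Part~(c) is the genuinely interesting assertion and where the real content lies: the Grauert tube is \emph{never} deformation equivalent to a polynomially convex domain. Here I would invoke the condition in Theorem~\ref{thm:Weinstein}(c), namely that the push-forward Weinstein structure must extend to a global Weinstein structure $(\wt\om,\wt X,\wt\phi)\in\fW(\C^n,i)$ on all of $\C^n$. The obstruction comes from the Lagrangian zero section $L$: an extension to a Weinstein structure on $\C^n$ would, by the displacement/energy properties of the standard symplectic $\C^n$ together with the fact that the core Lagrangian $L$ is then a closed exact Lagrangian inside the completed Weinstein manifold $(\C^n,\fW_\st)$, force $L$ to bound in a way incompatible with Gromov's theorem that there are no closed exact (in particular no displaceable) Lagrangian submanifolds of $(\C^n,\om_\st)$. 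Concretely, a closed exact Lagrangian in $(\C^n,\om_\st)$ cannot exist, yet the global Weinstein extension would produce one as (part of) the core, a contradiction.

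I expect the main obstacle to be part~(c), specifically in making precise how the existence of the global Weinstein extension on $\C^n$ manufactures a closed exact Lagrangian and invoking the correct nonexistence result. The subtlety is that $L$ sits in $\C^n$ as a Lagrangian of the given domain, which is not yet contradictory---the contradiction must be extracted from the extendability in~(c), i.e.\ from completing the Weinstein structure and locating $L$ in the symplectic core of $(\C^n,\om_\st)$. I would pin this down using that a Weinstein structure on $\C^n$ with core containing the closed manifold $L$ forces $L$ to be an exact Lagrangian (the Liouville form restricts to an exact form on $L$ since $L$ lies in a regular sublevel set and the Liouville field is gradient-like), and then cite the Gromov-type nonexistence of closed exact Lagrangians in standard $\C^n$; the parts (a) and (b) should follow more routinely once the neighborhood-theorem identifications above are set up carefully.
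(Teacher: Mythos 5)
Your outline for the ``if'' halves of (a) and (b) (tubular neighborhoods of totally real submanifolds, Weinstein's Lagrangian neighborhood theorem, then Theorem~\ref{thm:Weinstein}) matches the paper. But all of your ``only if'' arguments share one genuine gap: deformation equivalence only produces a Stein/Weinstein structure that is \emph{homotopic} to the Grauert tube structure, not equal to it. Concretely, the representative $(\om,X,\phi)\in\fW(D^*L,J_\Grauert)$ and the symplectic embedding $\wt f$ supplied by Theorem~\ref{thm:Weinstein} are for \emph{some} structure in the homotopy class; the zero section of $D^*L$ need not be $\om$-Lagrangian, need not lie in the core of $(\om,X,\phi)$, and $i_X\om$ need not restrict to an exact form on it. So ``an $i$-convex realization restricts on the core to a totally real embedding'' (your (a)), ``$\wt f$ restricts on the core to a Lagrangian embedding'' (your (b)), and ``the extension produces $L$ as part of the core'' (your (c)) are all unjustified as stated. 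The paper closes exactly this hole with Lemma~\ref{lem:Lagrangian}: along any Weinstein homotopy starting at $\fW(D^*L,J_\Grauert,\phi_\Grauert)$ one can drag the zero section through an isotopy $L_t$ with $\lambda_t|_{L_t}$ exact for all $t$ (proved via Gray stability and Moser). Moreover, your proposed substitute in (c) --- that a closed manifold lying in a regular sublevel set of a Weinstein structure with gradient-like Liouville field must have exact $\lambda|_L$ --- is simply false: a Lagrangian torus in a round ball in $(\C^n,\om_\st)$ satisfies both hypotheses and is never exact, by the very theorem of Gromov you invoke. Without a persistence statement of the type of Lemma~\ref{lem:Lagrangian}, none of your ``only if'' directions goes through.

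For part (c) there is a second issue of economy and of route: the paper's proof is one line and purely topological. By Theorem~\ref{thm:polynomial} (Andreotti--Narasimhan), a polynomially convex domain $W$ satisfies $H_n(W;G)=0$ for every abelian group $G$, whereas $D^*L$ retracts onto the closed $n$-manifold $L$, so $H_n(D^*L;\Z/2)\cong H_n(L;\Z/2)\neq 0$; since a deformation equivalence is in particular a diffeomorphism, (c) follows with no symplectic input at all. Your Gromov-based argument \emph{can} be completed, but it needs two ingredients you do not supply: Lemma~\ref{lem:Lagrangian} to produce an exact Lagrangian $L_1\subset(D^*L,\om)$ for the given representative, and the Remark following Theorem~\ref{thm:Weinstein} ensuring the global extension can be chosen with $\wt\om=\om_\st$, so that its Liouville form is a global primitive of $\om_\st$ and, since $H^1(\C^n;\R)=0$, exactness of $\wt f(L_1)$ with respect to that primitive implies exactness with respect to $\lambda_\st$, contradicting Gromov. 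As written, your proposal identifies the right high-level tools for (a) and (b) but misses the key lemma, and for (c) chooses a substantially harder path whose crucial exactness step is argued incorrectly.
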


Note that part (c) is an immediate corollary of the vanishing
of $H_n(W;G)$ for polynomially convex domains, which was already stated
in Theorem~\ref{thm:polynomial}. 
The ``if'' in part (b) was proved by Duval and Sibony
in~\cite{DuvSib95}.  

Statement (b) shows that the question whether $D^*L$ is deformation
equivalent to a 
rationally convex domain in $\C^n$ depends on the topology on $L$ in a
subtle way. By a theorem of Gromov, the answer is negative for
manifolds with $H^1(L;\R)=0$. For example, since $S^3$ admits a
totally real embedding into $\C^3$ but no Lagrangian one,
$D^*S^3$ is deformation equivalent to an $i$-convex domain in $\C^3$,
but not to a rationally convex one.  

According to~\cite{BEE09}, any flexible Weinstein domain has vanishing
symplectic homology. On the other hand, by a result of several authors
(see~\cite[Section 17.1]{CieEli12} for references),
$(D^*L,J_\Grauert)$ has nonvanishing symplectic homology and is
therefore not flexible. Thus Grauert tubes of Lagrangian submanifolds
of $\C^n$ provide examples of rationally convex domains that are not
flexible. However, we do not know any example of a rationally convex
domain $W$ in $\C^n$ with $H^1(\p W;\R)=0$ which is not flexible. 
%We conjecture that a Stein domain $(W,J)$ with $H^1(\p W;\R)=0$ which
%is deformation equivalent to a rationally convex domain in $\C^n$ must
%be flexible. 

\subsection{Isotopy through $i$-convex domains}
One can ask when an $i$-convex domain $W\subset\C^n$ is
isotopic to a polynomially or rationally convex domain {\em via an
  isotopy through $i$-convex domains}. (Recall that in our terminology
an $i$-convex domain in $\C^n$ is a compact domain with smooth
strictly pseudoconvex boundary.) The answer is provided by

\begin{theorem} \label{thm:equiv-isotop}
Let $f_t:W\into \C^n$, $t\in[0,1]$, be an isotopy of smooth embeddings
of a domain $W$ such that $f_0(W)$ and $f_1(W)$ are $i$-convex. 
Then the path $f_t$ is 
homotopic with fixed end points in the space of embeddings to a path
of embeddings $\wt f_t:W\into\C^n$ onto $i$-convex domains $\wt
f_t(W)$ if and only if there exists a Stein homotopy $(W,J_t)$ such
that $J_0=f_0^*i$ and $J_1=f_1^*i$, and the paths $J_t$ and $f_t^*i$
are homotopic with fixed end points in the space of almost complex
structures on $W$. 
\end{theorem}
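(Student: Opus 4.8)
\textbf{The easy direction} ($\Rightarrow$). Suppose $f_t$ is homotopic with fixed endpoints to a path $\wt f_t$ of embeddings onto $i$-convex domains. Since each $\wt f_t(W)$ is $i$-convex, the pulled-back structures $J_t:=\wt f_t^*i$ are integrable and Stein, and they depend smoothly on $t$ (a defining $i$-convex function for $\wt f_t(W)$ pulls back to a defining $J_t$-convex function varying smoothly in $t$), so $\{J_t\}$ is a Stein homotopy. The fixed-endpoint condition forces $\wt f_0=f_0$ and $\wt f_1=f_1$, whence $J_0=f_0^*i$ and $J_1=f_1^*i$. Finally, choosing a homotopy of paths of embeddings $F_{s,t}$ with $F_{0,t}=f_t$, $F_{1,t}=\wt f_t$, and $F_{s,0}=f_0$, $F_{s,1}=f_1$ for all $s$, the family $F_{s,t}^*i$ is a homotopy of paths of almost complex structures from $f_t^*i$ to $J_t$ which is constant at $t\in\{0,1\}$; this is exactly the homotopy with fixed endpoints required by the statement.

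\textbf{The hard direction} ($\Leftarrow$). This is a parametric and relative version of Theorem~\ref{thm:Weinstein}(a). The data we are given --- the Stein homotopy $J_t$ with $J_0=f_0^*i$, $J_1=f_1^*i$, together with a homotopy $J_{s,t}$ of almost complex structures from $f_t^*i$ to $J_t$ fixed at $t\in\{0,1\}$ --- is precisely the \emph{formal} solution of the one-parameter family of realization problems solved by Theorem~\ref{thm:Weinstein}(a): for each individual $t$ it certifies that $f_t^*i$ is homotopic through almost complex structures to the Stein structure $J_t$. The plan is to run the construction underlying Theorem~\ref{thm:Weinstein}(a) over the whole parameter interval $t\in[0,1]$, producing a smooth family of embeddings $\wt f_t$ onto $i$-convex domains with $\wt f_t^*i$ Stein homotopic to $J_t$, while arranging that each $\wt f_t$ stays isotopic to $f_t$ and, crucially, that $\wt f_0=f_0$ and $\wt f_1=f_1$. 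The last point is possible because $f_0,f_1$ are \emph{already} $i$-convex, so at the two ends of the parameter interval the construction may be taken to be the identity; the concatenation of the isotopy furnished by Theorem~\ref{thm:Weinstein}(a) with the compatibility homotopy $J_{s,t}$ then exhibits $\wt f_t$ as homotopic to $f_t$ with fixed endpoints.

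\textbf{The main obstacle.} The essential difficulty is to make the realization of Theorem~\ref{thm:Weinstein}(a) simultaneously parametric in $t$ and relative to the endpoints $t\in\{0,1\}$. Two points require care. First, a Stein homotopy $J_t$ may pass through birth--death moments of its defining functions, so the underlying skeleta and the totally real (isotropic) embeddings produced by the $h$-principle change topological type along the family; one must carry the family of $i$-convex embeddings across these moments, which is where the flexibility of the Stein structures and the parametric $h$-principle for totally real embeddings (as in~\cite{Eli90,CieEli12}) enter. Second, one must guarantee that the constructed isotopy does not disturb the given $i$-convex embeddings at $t=0,1$, i.e.~the parametric $h$-principle must be applied relative to the boundary of the parameter interval, so that the formal homotopy --- which is already genuine (constant) there --- is left untouched at the ends. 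Granting these relative parametric refinements of the techniques behind Theorem~\ref{thm:Weinstein}(a), the construction yields the desired path $\wt f_t$ and closes the equivalence.
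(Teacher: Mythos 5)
Your proof of the ``only if'' direction coincides with the paper's (which simply sets $J_t:=\wt f_t^*i$), and your strategy for the ``if'' direction --- upgrade the realization machinery behind Theorem~\ref{thm:Weinstein}(a) to a version that is parametric in $t$ and relative to $t\in\{0,1\}$ --- is also the paper's strategy, so at the level of ideas you agree. The difference lies in the implementation, and the paper's implementation dissolves exactly the points you label as obstacles. The paper first converts the Stein homotopy into a Weinstein homotopy: pick a generic family of defining $J_t$-convex functions $\phi_t$ and set $\fW_t:=\fW(W,J_t,\phi_t)$. It then invokes an \emph{ambient} version of~\cite[Theorem 15.2]{CieEli12} (proved exactly like Theorem 15.2, but with the Parametric Stein Existence Theorem 13.6 replaced by a $1$-parametric version of the Ambient Stein Existence Theorem 13.4): after composing the $\phi_t$ with a convex increasing function, there is a $2$-parameter family of embeddings $h_{s,t}:W\into W$ with $h_{0,t}=h_{s,0}=h_{s,1}=\Id$, such that each $\phi_t$ is $h_{1,t}^*f_t^*i$-convex and the paths of Weinstein structures $\fW_t$ and $\fW(h_{1,t}^*f_t^*i,\phi_t)$ are homotopic rel endpoints with fixed functions $\phi_t$. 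Setting $\wt f_t:=f_t\circ h_{1,t}$ finishes the proof: each $\wt f_t(W)\subset f_t(W)$ is an $i$-convex domain, and $f_t\circ h_{s,t}$ is the required homotopy rel $t=0,1$. In other words, by producing the new domains \emph{inside} the old ones via self-embeddings of $W$, the endpoint conditions and the rel-endpoints homotopy --- which you treat as delicate extra requirements on a parametric $h$-principle --- come for free from $h_{0,t}=h_{s,0}=h_{s,1}=\Id$.

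Two inaccuracies in your sketch should be corrected. First, flexibility plays no role and cannot be invoked: the Stein homotopy $J_t$ in the hypothesis is arbitrary (this theorem, unlike Corollary~\ref{cor:flexible-isotopic}, has no flexibility assumption), and the machinery used (Theorems 13.4, 13.6 and 15.2 of~\cite{CieEli12}) does not require it; what carries the construction across birth--death moments is precisely the Weinstein-homotopy formulation, not flexibility. If flexibility were genuinely needed, your argument could not prove the theorem as stated. Second, your closing sentence about ``concatenating'' the isotopy with the compatibility homotopy $J_{s,t}$ conflates a path of embeddings with a path of almost complex structures: the formal homotopy $J_{s,t}$ is the \emph{input} to the $1$-parametric ambient existence theorem, not a piece of the output isotopy; the homotopy of paths of embeddings must be produced by the construction itself, as it is in the paper via $f_t\circ h_{s,t}$.
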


\begin{proof}
The ``only if'' follows simply by setting $J_t:=\wt f_t^*i$.

For the ``if'', pick a generic family of defining $J_t$-convex functions
$\phi_t:W\to\R$ and consider the Weinstein homotopy
$\fW_t:=\fW(W,J_t,\phi_t)$. By an ambient version of~\cite[Theorem
  15.2]{CieEli12}, after composing the $\phi_t$ with convex increasing
function $\R\to\R$, there exists a $2$-parametric family of embeddings
$h_{s,t}:W\into W$, $s,t\in[0,1]$, such that
\begin{itemize}
\item $h_{0,t}=h_{s,0}=h_{s,1}=\Id$;
\item the functions $\phi_t$ are $h^*_{1,t}f_t^*i$-convex;
\item the paths of Weinstein structures $\fW_t$ and
  $\fW(h_{1,t}^*f_t^*i,\phi_t)$ are homotopic with fixed end points
  and with fixed functions $\phi_t$.
\end{itemize}
The proof of this ambient version is essentially identical to the
proof of Theorem 15.2 given in~\cite{CieEli12}, replacing the
Parametric Stein Existence Theorem 13.6 by a $1$-parametric version of
the Ambient Stein Existence Theorem 13.4. 
Now the isotopy $\wt f_t:=f_t\circ h_{1,t}:W\into\C^n$ has the
required properties. 
\end{proof}
   
As a corollary of Theorem~\ref{thm:equiv-isotop},
Theorem~\ref{thm:flexible}, and~\cite[Theorem 
  15.14]{CieEli12} we get 

\begin{cor}\label{cor:flexible-isotopic}
(a) Any two {\em flexible} $i$-convex domains in $\C^n$, $n\geq 3$, that
are smoothly isotopic are isotopic through $i$-convex domains. 

(b) Every {\em flexible} $i$-convex domain $W\subset\C^n$, $n\geq 3$, is 
isotopic through $i$-convex domains to a rationally convex domain. 

(c) Every {\em flexible} $i$-convex domain $W\subset\C^n$, $n\geq 3$, 
satisfying $H_n(W;G)=0$ for every abelian group $G$ is 
isotopic through $i$-convex domains to a polynomially convex
domain. \qed 
\end{cor}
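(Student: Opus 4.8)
The plan is to prove part (a) first and then deduce (b) and (c) from it together with Theorem~\ref{thm:flexible}. For (a), given two flexible $i$-convex domains $D_0,D_1\subset\C^n$ that are smoothly isotopic, I would set $W:=D_0$ and pick a smooth isotopy $f_t:W\into\C^n$ with $f_0$ the inclusion and $f_1(W)=D_1$. Then $J_0:=f_0^*i$ and $J_1:=f_1^*i$ are the induced Stein structures; both are flexible (for $J_1$ because $f_1:(W,J_1)\to D_1$ is an isomorphism of Stein domains and flexibility is a deformation invariant), and the isotopy supplies a path $f_t^*i$ of almost complex structures joining them. The decisive step is to apply the parametric uniqueness of flexible Stein structures, \cite[Theorem 15.14]{CieEli12}, to upgrade this path: since $J_0,J_1$ are flexible and joined by $f_t^*i$, there is a Stein homotopy $(W,J_t)$ with $J_0,J_1$ as endpoints whose underlying path of almost complex structures is homotopic, with fixed end points, to $f_t^*i$. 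This is exactly the hypothesis needed for the ``if'' direction of Theorem~\ref{thm:equiv-isotop}, which then provides a homotopy, rel endpoints and in the space of embeddings, from $f_t$ to a path $\wt f_t$ of embeddings onto $i$-convex domains. Because the endpoints are fixed, $\wt f_t(W)$ is an isotopy through $i$-convex domains from $D_0$ to $D_1$.

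For (b), I would write $J:=f^*i$ for the inclusion $f:W\into\C^n$ of a flexible $i$-convex domain, so $J$ is flexible, and apply Theorem~\ref{thm:flexible} to smoothly isotope $f$ to $g:W\into\C^n$ with $g(W)$ rationally convex and $g^*i$ Stein homotopic to $J$. Stein-homotopy invariance of flexibility makes $(W,g^*i)$, and hence $g(W)$, a flexible $i$-convex domain that is smoothly isotopic to $W$; part (a) then yields an isotopy through $i$-convex domains from $W$ to the rationally convex domain $g(W)$. Part (c) is identical, except that the extra hypothesis $H_n(W;G)=0$ for every abelian group $G$ lets me invoke the polynomial conclusion of Theorem~\ref{thm:flexible}, so that $g(W)$ is polynomially convex before applying (a).

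The hard part is concentrated entirely in part (a), and specifically in the appeal to parametric flexible uniqueness: what is needed is not merely \emph{some} Stein homotopy between the two flexible structures, but one whose underlying almost complex path agrees, rel endpoints, with the path $f_t^*i$ coming from the given smooth isotopy. Producing this matched homotopy is what makes Theorem~\ref{thm:equiv-isotop} applicable; once it is in hand, that theorem is a black box and the remainder is formal. The reductions of (b) and (c) to (a) carry no analytic content beyond checking that the domain produced by Theorem~\ref{thm:flexible} is again flexible and $i$-convex, which is immediate from the Stein-homotopy invariance of flexibility.
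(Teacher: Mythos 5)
Your proposal is correct and takes essentially the same route as the paper: the corollary is stated there as a direct consequence of exactly your three ingredients --- Theorem~\ref{thm:equiv-isotop}, Theorem~\ref{thm:flexible}, and the uniqueness of flexible Stein structures \cite[Theorem 15.14]{CieEli12} --- with (a) obtained by feeding flexible uniqueness into the ``if'' direction of Theorem~\ref{thm:equiv-isotop}, and (b), (c) by combining Theorem~\ref{thm:flexible} with (a). Your write-up simply makes explicit the reductions that the paper leaves to the reader.
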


Without the flexibility hypothesis,
Corollary~\ref{cor:flexible-isotopic}(a) becomes false:

\begin{cor}
For every $n\geq 3$ there exist $i$-convex (even rationally convex)
domains in $\C^n$ that are smoothly isotopic, but not isotopic through
$i$-convex domains. 
\end{cor}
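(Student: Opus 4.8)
The plan is to produce two rationally convex domains that are smoothly isotopic but whose induced Weinstein structures differ in a way detectable by symplectic homology, so that no isotopy through $i$-convex domains can connect them. By Theorem~\ref{thm:equiv-isotop}, an isotopy through $i$-convex domains between $f_0(W)$ and $f_1(W)$ forces a Stein homotopy between $(W,f_0^*i)$ and $(W,f_1^*i)$ compatible with the given path of almost complex structures; since symplectic homology (and more refined invariants of the filling) is a deformation invariant of Stein domains, it suffices to exhibit two smoothly isotopic rationally convex embeddings of the same smooth domain $W$ whose Stein structures are \emph{not} deformation equivalent.

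The natural source of such examples is Corollary~\ref{cor:Lagrangian}(b): Grauert tubes $D^*L$ of closed $n$-manifolds admitting a Lagrangian embedding into $(\C^n,\om_\st)$ are rationally convex. First I would fix a closed $n$-manifold $L$, $n\geq 3$, that admits \emph{two} Lagrangian embeddings into $\C^n$ which are smoothly isotopic but not Hamiltonian (or even symplectically) isotopic, and whose associated Grauert-tube fillings are distinguished by symplectic homology or by the Legendrian/Lagrangian invariants carried by the filling. The two Lagrangian embeddings give two rationally convex domains $g_0(W),g_1(W)$ with $W=D^*L$; since the underlying Lagrangians are smoothly isotopic, the domains $g_0(W)$ and $g_1(W)$ are smoothly isotopic in $\C^n$. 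On the other hand, because $(D^*L,J_\Grauert)$ has nonvanishing and geometrically structured symplectic homology (as recalled in the discussion after Corollary~\ref{cor:Lagrangian}), the two fillings record inequivalent data, so $(W,g_0^*i)$ and $(W,g_1^*i)$ are not deformation equivalent.

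Finally I would invoke Theorem~\ref{thm:equiv-isotop}: if $g_0(W)$ and $g_1(W)$ were isotopic through $i$-convex domains, there would exist a Stein homotopy from $g_0^*i$ to $g_1^*i$, hence a deformation equivalence of the two Stein domains, contradicting the symplectic-homology computation. This produces $i$-convex (indeed rationally convex) domains that are smoothly isotopic but not isotopic through $i$-convex domains, as claimed.

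The main obstacle is purely at the input stage: one must exhibit a closed $n$-manifold $L$ together with two smoothly isotopic Lagrangian embeddings into $\C^n$ whose Grauert-tube fillings are provably \emph{not} deformation equivalent. This is where one needs a genuine symplectic rigidity statement (a nontrivial symplectic-homology or wrapped-Floer computation distinguishing the two fillings, or an obstruction coming from the Lagrangian isotopy class), rather than any soft or $h$-principle argument; the rest of the proof is formal once such $L$ and such embeddings are in hand.
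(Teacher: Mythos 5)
Your overall skeleton --- reduce via Theorem~\ref{thm:equiv-isotop} to non-deformation-equivalence of the induced Stein structures, and detect this by symplectic homology --- is exactly the paper's strategy, but the input you ask for at the end cannot exist, so the construction collapses at its key step. If $g_0,g_1:L\into\C^n$ are two Lagrangian embeddings of the \emph{same} closed manifold $L$, then the rationally convex domains they produce via Corollary~\ref{cor:Lagrangian}(b) (equivalently, via Theorem~\ref{thm:Weinstein}(b) applied to Weinstein neighborhoods of the two Lagrangians) are both deformation equivalent to the Grauert tube $(D^*L,J_\Grauert)$: the Stein structure induced on a tubular neighborhood of a totally real (in particular Lagrangian) copy of $L$ is a Grauert tube of $L$, and, as recalled in Section~\ref{sec:intro-sym-topology}, any two Grauert tubes of $L$ are Stein homotopic. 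Hence $(W,g_0^*i)$ and $(W,g_1^*i)$ are \emph{always} deformation equivalent, no matter how different the symplectic (or smooth) isotopy classes of the two Lagrangian embeddings are. Symplectic homology of the filling is an invariant of the intrinsic Stein/Weinstein structure, so it cannot tell two Grauert tubes of the same $L$ apart; the difference between non-symplectically-isotopic Lagrangian embeddings is extrinsic to the domain and invisible to the only obstruction your argument uses.

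The fix --- and this is what the paper does --- is to vary the \emph{Stein structure} on the fixed smooth manifold $D^*L$ rather than the Lagrangian embedding. Take $L$ a closed Lagrangian submanifold of $(\C^n,\om_\st)$. Corollary~\ref{cor:Lagrangian}(b) realizes the Grauert tube $(D^*L,J_\Grauert)$ as a rationally convex domain $W_0$. On the other hand, $D^*L$ also carries a \emph{flexible} Stein structure $J_{\rm flex}$ in the same homotopy class of almost complex structures (Theorems 13.1 and 13.5 of \cite{CieEli12}), and Theorem~\ref{thm:flexible} realizes $(D^*L,J_{\rm flex})$ as a rationally convex domain $W_1$ smoothly isotopic to $W_0$. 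Now the symplectic homology argument genuinely applies: flexible Weinstein domains have vanishing symplectic homology by \cite{BEE09}, while the Grauert tube has nonvanishing symplectic homology, so $(D^*L,J_\Grauert)$ and $(D^*L,J_{\rm flex})$ are not deformation equivalent, and Theorem~\ref{thm:equiv-isotop} then rules out any isotopy of $W_0$ to $W_1$ through $i$-convex domains.
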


\begin{proof}
Let $L$ be a closed Lagrangian submanifold of $(\C^n,\om_\st)$. By
Corollary~\ref{cor:Lagrangian}(b), the Grauert tube
$(D^*L,J_\Grauert)$ is deformation equivalent to a rationally convex 
domain $W_0\subset\C^n$. On the other hand, by Theorems 13.1 and 13.5
in~\cite{CieEli12}, $D^*L$ carries a flexible Stein structure $J_{\rm
  flex}$, and by Theorem~\ref{thm:flexible}, $(D^*L,J_{\rm flex})$ is
deformation equivalent to a rationally convex domain
$W_1\subset\C^n$. Since $(D^*L,J_\Grauert)$ and $(D^*L,J_{\rm flex})$
are not deformation equivalent, $W_0$ and $W_1$ are not isotopic through
$i$-convex domains. 
\end{proof}

\subsection{Generalizations to other Stein
  manifolds}\label{sec:other-manifolds} 
The notions of rational and polynomial convexity generalize in a
straightforward way from $\C^n$ to a general Stein manifold
$(V,J)$. Let us denote by $\OO:=\OO(V,J)$ the algebra of holomorphic
functions on $(V,J)$, and by $\MM:=\MM(V,J)$ its field of fractions,
i.e., the algebra of meromorphic functions on $V$. For a compact set
$K\subset V$, one defines its 
{ \it $\OO$-hull} as 
$$
   \wh K_\OO  := \{z\in V\:\Bigl|\;
   |f(z)|\leq\mathop{\max}\limits_{u\in K}|f(u)|\hbox{ for  
  all functions }f\in\OO\},
$$
and its {\it $\MM$-hull} as
$$
   \wh K_\MM := \{z\in V\;\Bigl|\; |R(z)|\leq\mathop{\max}\limits_{u\in
     K}|R(u)|\hbox{ for all functions }R=\frac fg\in\MM,\; g|_K\neq
   0\}. 
$$
A compact set is called {\em $\OO$-convex} (resp.~{\em $\MM$-convex})
if $\wh K_\OO=K$ (resp.~$\wh K_\MM=K$).

Given a proper holomorphic embedding $(V,J)\into(\C^N,i)$, a
compact subset $K\subset V$ is $\OO$-(resp.~$\MM$-)convex if and only
its image in $\C^N$ is polynomially (resp.~rationally) convex. 
This follows from the standard corollary  of Cartan's Theorem  B (see e.g.~\cite[Corollary 5.37]{CieEli12}) 
that any holomorphic (resp. meromorphic) function on $V\subset\C^N$ is the restriction of a holomorphic 
(resp. meromorphic) function on $\C^N$, together with the fact that any holomorphic (resp. meromorphic) 
function on $\C^N$ can be approximated uniformly on compact sets by polynomials (resp. rational functions).
In particular, for $(V,J)=(\C^n,i)$ the notions of $\OO$- and $\MM$-convexity reduce
to polynomial and rational convexity, respectively.

Most of the results of this paper have analogues in this more general
situation. The proofs are   essentially identical, and we do not discuss them in this paper. In particular,
Theorems~\ref{thm:rational} and~\ref{thm:polynomial} generalize to 

\begin{theorem}\label{thm:convexity-in-Stein}
Let $(V,J)$ be a Stein manifold of complex dimension $n\geq 3$ and
$W\subset V$ be a compact domain. 

(a) $W$ is smoothly isotopic to an $\MM$-convex domain if and only if
it admits a defining Morse function without critical points of index
$>n$. 

(b) $W$ is smoothly isotopic to an $\OO$-convex domain if and only if
it satisfies, in addition, the following topological condition: 
\begin{itemize}
\item[($\mathrm{T_V}$)] The inclusion homomorphism $H_n(W;G)\to
  H_n(V;G)$ is injective for every abelian group $G$. 
\end{itemize}
\end{theorem}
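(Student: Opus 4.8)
The plan is to establish, for a general Stein manifold $(V,J)$, the analogues of Theorems~\ref{thm:Weinstein} and~\ref{thm:flexible}, from which the present statement follows exactly as Theorems~\ref{thm:rational} and~\ref{thm:polynomial} do in the body of the paper, and then to run those proofs with $(\C^n,i)$ replaced by $(V,J)$, the standard form $\om_\st$ by the symplectic form of a Weinstein structure in the class $\fW(V,J)$, and the algebras of polynomials and rational functions by $\OO$ and $\MM$. As in the $\C^n$ case the argument splits into a soft necessity part, governed by boundary convexity together with a handle count, and a harder sufficiency part resting on the flexibility $h$-principle and on the conversion of symplectic data into holomorphic convexity certificates.

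\emph{Necessity.} An $\OO$- or $\MM$-convex domain is holomorphically convex, hence, exactly as recalled in the Introduction for $\C^n$ via E.~Levi~\cite{Levi10}, it admits a defining $J$-convex function and therefore a defining Morse function without critical points of index $>n$; this is the ``only if'' in~(a). For~(b), an $\OO$-convex domain is, up to smooth isotopy, a regular sublevel set $W=\{\psi\le c\}$ of a $J$-convex Morse exhaustion $\psi\colon V\to\R$. Since $\psi$ has no critical points of index $>n$, the manifold $V$ is built from $W$ by attaching handles of index $\le n$ only, so $H_k(V,W;G)=0$ for all $k>n$ and all abelian groups $G$; the long exact sequence of the pair then forces $H_n(W;G)\to H_n(V;G)$ to be injective, which is condition~$(\mathrm{T_V})$. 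This is the Morse-theoretic form of the Andreotti--Narasimhan argument~\cite{AndNar62}.

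\emph{Sufficiency.} Suppose $W$ carries a defining Morse function without critical points of index $>n$. Using Theorems~13.1 and~13.5 of~\cite{CieEli12} (where $2n\ge6$ is needed) I would equip $W$ with a \emph{flexible} Stein structure $\wt J$ whose underlying almost complex structure is homotopic to the restriction $J|_W$. Running the proof of the $V$-analogue of Theorem~\ref{thm:Weinstein} then yields a smooth isotopy of the inclusion $W\into V$ to a deformation equivalence onto an $i$-convex domain, together with a symplectic embedding $(W,\om_{\wt J})\into(V,\om_V)$ for an exact K\"ahler form $\om_V$ representing $\fW(V,J)$. Applying the Duval--Sibony technique~\cite{DuvSib95} to the isotropic core of the resulting flexible Weinstein structure then produces a global weakly $i$-convex potential, singular along the core, that certifies $\MM$-convexity; this proves~(a).

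For $\OO$-convexity one must, as in Theorem~\ref{thm:Weinstein}(c), extend the Weinstein structure from $W$ to a global Weinstein structure on $V$ in the class $\fW(V,J)$, so that $W$ becomes a regular sublevel set of a global $J$-convex exhaustion and is therefore Runge, hence $\OO$-convex. Condition~$(\mathrm{T_V})$ enters here decisively: it is equivalent to $H_k(V,W;G)=0$ for all $k>n$ and all $G$, that is, to the vanishing of the relative homology of $(V,W)$ above the middle dimension, which is precisely what permits a handle decomposition of $(V,W)$ using handles of index $\le n$ only---the indices available to Weinstein handles. I expect this extension to be the main obstacle: beyond the abstract index-$\le n$ decomposition (handle trading, valid since $2n\ge6$), one must realize it by genuine Weinstein handles attached along $\p W$, exploiting looseness of the top-index Legendrian attaching spheres, and check that the resulting global structure is Weinstein-homotopic to $\fW(V,J)$. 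This is where the flexibility $h$-principle and the parametric and ambient Stein existence theorems of~\cite{CieEli12} carry the weight.
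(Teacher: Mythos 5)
Your overall architecture coincides with what the paper intends: the paper gives no separate proof of Theorem~\ref{thm:convexity-in-Stein}, but asserts that the proofs of Theorems~\ref{thm:rational} and~\ref{thm:polynomial} --- i.e.\ the reduction to $V$-analogues of Theorems~\ref{thm:Weinstein} and~\ref{thm:flexible}, together with Lemma~\ref{lm:top} and Criteria~\ref{prop:rational} and~\ref{prop:polynomial} --- carry over. Your necessity arguments are the intended ones, and your plan for part (a) is correct in outline, except that the actual $\MM$-convexity certificate is condition (R) of the Stein-manifold version of Criterion~\ref{prop:rational}, produced by the inductive isotropic-disc attachment of Proposition~\ref{prop:rational-attaching-Weinstein} (which is already stated for an arbitrary complex manifold $(V,J)$), rather than a potential ``singular along the core''.

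The genuine gap is in part (b). You propose to realize the index-$\le n$ handles of $(V,W)$ ``exploiting looseness of the top-index Legendrian attaching spheres'' and then to ``check that the resulting global structure is Weinstein-homotopic to $\fW(V,J)$''. That check must fail whenever $(V,J)$ is not flexible: a structure obtained from a flexible structure on $W$ by attaching handles along loose Legendrians is itself flexible, hence has vanishing symplectic homology by~\cite{BEE09}; symplectic homology is invariant under Weinstein homotopy, and for non-flexible Stein manifolds such as Grauert tubes it is nonzero (as recalled in Section~\ref{sec:intro-sym-topology}). So no flexible structure can lie in $\fW(V,J)$. In the $\C^n$ case this very step is where the proof of Theorem~\ref{thm:flexible} uses that $\fW(\C^n,i)$ is subcritical, hence flexible, so that the uniqueness theorem for flexible Weinstein structures \cite[Theorem 14.5]{CieEli12} applies; that is a special feature of flexible ambient manifolds, not shared by general Stein $V$. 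Moreover, the non-flexible case is exactly the new content of condition $(\mathrm{T_V})$: the domain $W=D^*S^n\subset V=T^*S^n$ satisfies $(\mathrm{T_V})$ but not (T), and for it your construction produces a flexible global structure that can never be homotoped into $\fW(T^*S^n,J_\Grauert)$, even though the conclusion of the theorem is obviously true there. To prove (b) one must attach the complementary handles along the correct --- in general non-loose --- attaching data; concretely, one should build the global $J$-convex exhaustion outward from the (already $J$-convex, after isotopy) domain $W$ using the ambient relative Stein existence machinery for the fixed integrable structure $J$ (\cite[Chapters 8 and 13]{CieEli12}), matching a given $J$-convex exhaustion of $V$ at infinity, rather than invoke the flexible $h$-principle plus uniqueness. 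A lesser but real additional issue: the generalization of Lemma~\ref{lm:top} also needs new input, since its proof uses Alexander duality in $S^{2n}$ and the $h$-cobordism recognition of a ball, neither of which is available in a general Stein manifold.
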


{\bf Acknowledgement. }
We thank Stefan Nemirovski for motivating questions and extremely helpful
remarks and suggestions on the subject of this paper. We are also
grateful to Brian Conrad, Soren Galatius and John Morgan for their
help in finding the example discussed in Proposition~\ref{prop:top}(b) 
and for illuminating discussions of the involved algebraic issues.
Finally, we thank the anonymous referee for pointing out a simplified proof
of Lemma~\ref{lem:potential}. 

%%%%%%%%%%%%%%%%%%%%%%%%%%%%%%%%%%%%%%%%%%%%%%%%%%%%%%%%%%%%%%%%%%%%%
\section{Topological preliminaries}\label{sec:top-prelim}
%%%%%%%%%%%%%%%%%%%%%%%%%%%%%%%%%%%%%%%%%%%%%%%%%%%%%%%%%%%%%%%%%%%%%
 
In this section we deal with the topological parts of our results. We
begin with the proof of Proposition~\ref{prop:top}. The example in
part (b) is an adaptation of Example 4.35 from Hatcher's
book~\cite{Hat02}. 

\begin{proof}[Proof of Proposition~\ref{prop:top}]
(a) Clearly, the existence of a defining Morse function without
critical points of index $\geq n$ implies condition (T). Conversely,
suppose that $\phi:W\to\R$ is a defining Morse function without
critical points of index $>n$. If $W$ is simply connected and
$H_n(W)=0$, then Smale's theorem on the existence of Morse functions
with the minimal number of critical points \cite[Theorem 6.1]{Sma62}
allows us to cancel all index $n$ 
critical points against index $n-1$ critical points to obtain a
defining Morse function without critical points of index $\geq n$ 
 
(b) Fix $n\geq 3$. 
%Let $V_0$ be the boundary connected sum of $S^{n-1}$
%and $S^1\times D^{n-2}$, i.e., the domain obtained by connecting
%$S^{n-1}$ and $S^1\times D^{n-2}$ by a $1$-handle. Let $W_0$ be the
%domain obtained by smoothing the corners of $V_0\times D^{n+1}$, so
Let $W_0$ be the boundary connected sum of $S^{n-1}\times D^{n+1}$
and $S^1\times D^{2n-1}$, i.e., the domain obtained by connecting
$S^{n-1}\times D^{n+1}$ and $S^1\times D^{2n-1}$ by a $1$-handle. Note that $W_0$ 
%(and hence $W_0$) 
is homotopy equivalent to $S^1\vee S^{n-1}$, so by~\cite{Hat02} it has
$\pi_{n-1}(W_0)\cong\Z[t,t^{-1}]$, the group ring of $\pi_1(W_0)=\Z$. 
Let $f:S^{n-1}\to W_0$ be a smooth map representing
$[f]=2t-1\in\pi_{n-1}(W_0)$. For dimension reasons, we can choose $f$
to be an embedding $f:S^{n-1}\into\p W_0$. This embedding has trivial
normal bundle. (To see this, note that the normal bundle of $f$
can be described by gluing two copies of $D^{n-1}\times\R^n$ via a map
$g:S^{n-2}\to O(n)$. Since $W$ can be embedded into $\R^{2n}$, the
normal bundle is stably trivial, so the homotopy class
$[g]\in\pi_{n-2}O(n)$ maps to zero under the stabilization map
$\pi_{n-2}O(n)\to\pi_{n-2}O$. As the stabilization map is an
isomorphism for $n\geq 3$, this shows that $[g]=0$ and thus the normal
bundle is trivial.) 
%We can view $g$ as a map $S^{n-1}\to
%V_0\times\p D^{n+1}\subset\p W_0$ constant in the second factor. Since
%$\dim\p W_0=2n-1$, a generic perturbation of $g$ yields an embedding
%$f:S^{n-1}\into\p W_0$ with trivial normal bundle representing
%$[f]=2t-1\in\pi_{n-1}(W_0)$. 

Let $W$ be the manifold obtained from
$W_0$ by attaching an $n$-handle along the embedding $f$, using any
framing. Since $W$ is built using handles of index $0,1,n-1,n$, it
carries a defining Morse function without critical points of index
$>n$. Moreover, the domain $W$ is homotopy equivalent to the space $X$ 
in~\cite[Example 4.35]{Hat02}, where it is shown that $H_i(X)=0$ for
all $i\geq2$. In particular, $H_n(W;G)=0$ for any coefficient group
$G$, so $W$ satisfies condition (T). Alternatively, this also follows
from the observation that attaching to $W$ a two-handle to kill its
fundamental group yields a ball. 

It remains to show that $\pi_n(W,\p W)\neq 0$. For this, we consider
the universal cover $\wt W$ of $W$. For subsets $A\subset W$, we
denote by $\wt A$ their preimage in $\wt W$. The group ring
$R=\Z[t,t^{-1}]$ of the fundamental group $\pi_1(W)=\Z$ acts (by deck
transformations) on the singular chain complex $C_*(\wt W)$, so the
homology groups $H_i(\wt W)$ are $R$-modules. The same applies to
relative homology groups $H_i(\wt A,\wt B)$ for subsets $B\subset
A\subset W$. Set $W_1:=W\setminus\Int W_0$ and consider the following
commuting diagram:
\begin{equation*}
\begin{CD}
   \pi_{n+1}(W,W_1) @>>> \pi_n(W_1,\p W) @>>> \pi_n(W,\p W) @>>>
   \pi_n(W,W_1) \\
   @V{\cong}VV @V{\cong}VV @V{\cong}VV @V{\cong}VV \\
   H_{n+1}(\wt W,\wt W_1) @>>> H_n(\wt W_1,\p\wt W) @>>>
   H_n(\wt W,\p \wt W) @>>> H_n(\wt W,\wt W_1) \\
   @V{\cong}VV @V{\cong}VV @V{\cong}VV @V{\cong}VV \\
   R @>{\phi}>> R @>>> R/\im(\phi) @>>> 0\;. \\
\end{CD}
\end{equation*}
Here the first two rows are parts of the long exact sequences of the
triple $(\wt W,\wt W_1,\p\wt W)$ (where we identify $\pi_i(\wt W,\wt
W_1)\cong\pi_i(W,W_1)$ etc), and the top vertical maps are Hurewicz
isomorphisms on the universal covers (which are simply connected). For
example, $W_1$ is obtained from $\p W$ by attaching an $n$-cell $e^n$,
so the pair $(\wt W_1,\p\wt W)$ is $(n-1)$-connected and the Hurewicz
map $\pi_n(W_1,\p W)\cong \pi_n(\wt W_1,\p\wt W)\to H_n(\wt W_1,\p\wt
W)$ is an isomorphism. Moreover, $H_n(\wt W_1,\p\wt W)\cong R$ is
generated as an $R$-module by $[e^n]$. Similarly, $W$ is obtained
from $W_1$ by attaching an $(n+1)$-cell $e^{n+1}$ plus higher
dimensional cells, so the pair $(\wt W,\wt W_1)$ is $n$-connected,
in particular $\pi_n(W,W_1)\cong H_n(\wt W,\wt W_1)=0$, and the
Hurewicz map $\pi_{n+1}(W,W_1)\cong \pi_{n+1}(\wt W,\wt W_1)\to
H_{n+1}(\wt W,\wt W_1)$ is an isomorphism. 
%Combined with the excision isomorphisms $\pi_{n+1}(W_0,\p
%W_0)\cong\pi_{n+1}(W,W_1)$ and $H_{n+1}(\wt W_0,\p\wt W_0)\cong
%H_{n+1}(\wt W,\wt W_1)$ (and similarly in degree $n$), this yields the
%vertical isomorphisms $\pi_{n+1}(W,W_1)\cong H_{n+1}(\wt W,\wt W_1)$
%and $\pi_n(W,W_1)\cong H_n(\wt W,\wt W_1)=0$. 
Moreover, $H_{n+1}(\wt
W,\wt W_1)\cong R$ is generated as an $R$-module by $[e^{n+1}]$.
By the five-lemma, the remaining Hurewicz map $\pi_n(W,\p W)\cong
\pi_n(\wt W,\p\wt W)\to H_n(\wt W,\p\wt W)$ is an isomorphism as well.  
Note that the homotopy groups in the diagram are also $R$-modules
and the Hurewicz maps are $R$-module homomorphisms. 

To compute the map $\phi$ recall that, by construction of the
attaching map $f$ above, the boundary map $\psi:R\cong H_n(\wt
W_1,\p\wt W_0)\to H_{n-1}(\wt W_0)\cong R$ is
multiplication by $(2t-1)$.   
By the duality lemma in~\cite[$\S$ 10]{Mil66}, the boundary map 
$\phi: R\cong H_{n+1}(\wt W,\wt W_1)\cong H_{n+1}(\wt W_0,\p\wt W_0)
\to H_n(\wt W_1,\p\wt W) \cong R$ is dual to $\psi$ in the sense that
$\phi$ is multiplication by $(-1)^{n}(2t^{-1}-1)$. Thus the image of
$\phi$ is the ideal $\la 2t^{-1}-1\ra$ generated by $2t^{-1}-1$, and
the above diagram shows $\pi_n(W,\p W)\cong \Z[t,t^{-1}]/\la
2t^{-1}-1\ra \neq 0$.  
%Let $W_0$ is the boundary connected sum of $S^{n-1}\times D^{n+1}$
%and $S^1\times D^{2n-1}$. Denote $\Sigma:=p\times \p D^{n+1}\subset\p
%W_0, p\in S^{n-1}$.    There is an embedding $\phi:S^{n-1}\to \p W_0$
%whose intersection index with the $n$-sphere $\Sigma$, viewed as an
%element of  the group ring $R\sim\Z[t,t^{-1}]$ of $\pi_1(W_0)=\Z$ is
%equal to $2t-1$. Let $W$ be the manifold obtained from $W_0$ by
%attaching an $n$-handle along the embedding $\phi$, using any
%framing.  If one attaches to $W$ a two-handle to kill its fundamental
%group then $W$ becomes a ball. Hence, $W$ satisfies  condition (T). 
%On the other hand,  the  homology group $H^n(W,\p W;R)$ with
%coefficients in the  group ring of $\pi_1(W)$ is non-trivial. Indeed,
%the boundary homomorphism $C_{n+1}(W,\p W)\otimes R=R\to C_{n}(W,\p
%W)\otimes R=R$ is the multiplication by $2t-1$ which is not
%surjective.  
\end{proof}

The following lemma will be used in the proof of
Theorem~\ref{thm:polynomial}. 

\begin{lemma}\label{lm:top}
Let $W\subset\C^n$, $n\geq 3$, be a domain satisfying condition (T) in
Theorem~\ref{thm:polynomial}, i.e., $W$ admits a defining Morse
function $\psi:W\to\R$ without critical points of index $>n$, and
$H_n(W;G)=0$ for every abelian group $G$. Then the function $\psi$
extends to a Morse function $\wt\psi:\C^n\to\R$ without critical
points of index $>n$ which equals $\wt\psi(z)=|z|^2$ outside a compact
set. 
\end{lemma}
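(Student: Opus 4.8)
The plan is to fix a large radius $R$ with $W\subset\Int B_R$, where $B_R=\{|z|\le R\}$, and to build $\wt\psi$ by gluing three pieces: $\wt\psi=\psi$ on $W$, $\wt\psi=|z|^2$ on $\C^n\setminus B_R$, and on the cobordism $A:=B_R\setminus\Int W$ (with $\partial_-A=\partial W$ and $\partial_+A=S_R:=\partial B_R$) a Morse function increasing from the level $\max_W\psi$ on $\partial W$ to the level $R^2$ on $S_R$. Since $\partial W$ and $S_R$ are regular level sets with outward-pointing gradients for $\psi$ and $|z|^2$ respectively, and there are no critical points on $\C^n\setminus B_R$, the three pieces can be matched smoothly in collars, and every critical point of $\wt\psi$ will be one of $\psi$ (index $\le n$) or one of the $A$-piece. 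The whole problem therefore reduces to the purely topological statement: \emph{the cobordism $A$ admits a Morse function, increasing from $\partial W$ to $S_R$, all of whose critical points have index $\le n$.}

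To produce such a function I would dualize. A Morse function on $A$ increasing from $\partial W$ to $S_R$ has all critical points of index $\le n$ if and only if the reversed function, which builds $A$ from $S_R$ by attaching handles, has all critical points of index $\ge n$, since the two indices of a critical point add up to $2n=\dim A$. Building $A$ from $S_R$ with handles of index $\ge n$ means cancelling all handles of index $\le n-1$, which is possible by the standard handle-cancellation theorems in the stable range (here $\dim A=2n\ge 6$) as soon as the pair $(A,S_R)$ is $(n-1)$-connected. Thus it suffices to prove that $(A,S_R)$ is $(n-1)$-connected.

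For the homology I would use Lefschetz duality for the cobordism $A$, namely $H_k(A,S_R)\cong H^{2n-k}(A,\partial W)$, together with the identification $H_j(A,\partial W)\cong H_j(B_R,W)\cong\wt H_{j-1}(W)$ coming from excision and the long exact sequence of $(B_R,W)$ with $B_R$ contractible. Because $\psi$ has no critical points of index $>n$ we have $H_j(W)=0$ for $j>n$, and condition (T) gives $H_n(W)=0$ and, via the universal coefficient theorem, that $H_{n-1}(W)$ is torsion free. Hence $H_j(A,\partial W)\cong\wt H_{j-1}(W)$ is nonzero only for $2\le j\le n$, and moreover the potentially dangerous term $\mathrm{Ext}(H_{n-1}(W),\Z)$ occurring in $H^{n+1}(A,\partial W)$ vanishes by torsion freeness; a short bookkeeping then yields $H_k(A,S_R)=0$ for all $k\le n-1$. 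This is exactly the step where both halves of condition (T) are used.

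It remains to pass from homology to homotopy, and this is the step I expect to be the main obstacle. Since $\psi$ has no critical points of index $>n$, the domain $W$ is a regular neighborhood of an embedded spine $C\subset\Int W$ (the union of unstable manifolds of $\psi$) of dimension $\le n$, so $A$ is a deformation retract of $B_R\setminus C$. As $\codim C\ge n\ge 3$ inside the simply connected ball $B_R$, a general position argument (loops and their spanning discs can be pushed off $C$) gives $\pi_1(A)\cong\pi_1(B_R\setminus C)\cong\pi_1(B_R)=0$. Since $\pi_1(S_R)=\pi_1(S^{2n-1})=0$ as well, the relative Hurewicz theorem upgrades the vanishing $H_k(A,S_R)=0$ for $k\le n-1$ to $\pi_k(A,S_R)=0$ for $k\le n-1$, so $(A,S_R)$ is $(n-1)$-connected. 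Feeding this into the handle cancellation above produces the desired Morse function on $A$, and the collar gluing of the first paragraph completes the construction of $\wt\psi$.
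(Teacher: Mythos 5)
Your proposal is essentially correct but follows a genuinely different route from the paper. The paper never touches the complementary cobordism directly: it attaches handles of index $1,\dots,n-1$ and then $n$ to $W$ \emph{from outside}, inside $\C^n$ (using general position to embed attaching spheres in $\p W$ and Alexander duality plus condition (T) to find the spanning discs in the complement), until it reaches a simply connected domain $\wt W$ with vanishing reduced homology; the $h$-cobordism theorem then identifies $\wt W$ with the ball $B^{2n}$, and uniqueness of ball embeddings gives the extension by $|z|^2$. You instead fix the ball $B_R$ once and for all and restructure the handle decomposition of the cobordism $A=B_R\setminus\Int W$; your homology computation (Lefschetz duality, excision, UCT, both halves of (T$'$)) correctly yields $H_k(A,S_R)=0$ for $k\leq n-1$, and the spine/general-position argument for $\pi_1(A)=0$ together with relative Hurewicz correctly gives $(n-1)$-connectivity of $(A,S_R)$. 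A notable feature of the paper's order of operations is that it kills $\pi_1(W)$ \emph{first}, by low-index handle attachments that live in the easy general-position range, so that when Smale's theory is finally invoked everything in sight is simply connected.

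The one step you should not wave through is the final handle-trading claim, and the phrase ``standard handle-cancellation theorems in the stable range'' misdescribes it. Trading the index-$(n-1)$ handles of $A$ (attached to the $S_R$ side) for index-$(n+1)$ handles sits one step below the middle dimension of the $2n$-dimensional cobordism: it requires embedding $n$-discs in $(2n-1)$-dimensional level sets, which is never achievable by general position and needs the Whitney trick (for $n=3$ even its refined version with a $\pi_1$-of-complement hypothesis, verified using simple connectivity of the low intermediate levels). Moreover, the most commonly cited form of this machinery, Smale's theorem \cite{Sma62} (which the paper uses for Proposition~\ref{prop:top}(a) and for the $h$-cobordism step), assumes \emph{both} ends of the cobordism are simply connected, whereas $\p W$ here need not be: $\pi_1(\p W)\cong\pi_1(W)$ can be any group arising under condition (T). The statement you need is true and citable --- it is Wall's geometrical connectivity theorem (J.~London Math.~Soc.~1971): a $k$-connected pair $(A,S_R)$ with $k\leq \dim A-4$ admits a handle decomposition on $S_R$ with handles of index $>k$, and $k=n-1\leq 2n-4$ holds exactly when $n\geq 3$ --- and one can check that the trading process only ever uses $\pi_1$-data of levels near the $S_R$ end, so the non-simply-connected far end is harmless. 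But as written, your appeal to a ``stable range'' fact is a citation gap at precisely the point where the argument is hardest; with Wall's theorem substituted in, the proof is complete.
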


\begin{proof}
Note that condition (T) implies $H_i(W;G)=0$ for all $i\geq n$ and any
abelian group $G$. Let us pick any gradient-like vector field $X$ for
$\psi$. 

Any map $g:S^{k-1}\to W$ from a sphere of dimension $k-1\leq
n-1$ is generically an embedding which does not meet any stable
manifold of the vector field $X$. Moreover, generically no trajectory
of $X$ is tangent to $g(S^{k-1})$ or intersects it in more than
one point. Hence, using the flow of $X$, $g$ is isotopic to an embedding
$f:S^{k-1}\into\p W$. We claim that $f$ is contractible in
$V:=\C^n\setminus\Int W$. For $k<n$ this is true since $\C^n$ is
obtained from $V$ by attaching handles of indices $\geq n$, and thus
$\pi_i(V)=\pi_i(\C^n)=0$ for all $i\leq n-2$. For $k=n$ it follows from
$$
    \pi_{n-1}(V)\cong H_{n-1}(V) \cong H_{n-1}(S^{2n}\setminus W) \cong H^n(W) \cong 0,
$$
where the first isomorphism follows from the Hurewicz theorem and $(n-2)$-connectivity of $V$,
the third one from Alexander duality (see~\cite[Theorem 3.44]{Hat02}), and the last one from
condition (T).  

So $f$ extends to an embedding $F:D^k\into \C^n\setminus\Int W$
transversely attaching the $k$-disk to $\p W$ along its boundary.  
Let $W'\subset\C^n$ be a tubular neighborhood of $W\cup
F(D^k)$ in $\C^n$ with smooth boundary, so $W'$ is obtained from $W$
by attaching a $k$-handle along $f$ (with some framing). Then:
\begin{enumerate}
\item $H_i(W';G)\cong H_i(W;G)$ for all $i>k$ and for every $G$;
\item $\pi_i(W')\cong \pi_i(W)$ for all $i<k-1$;
\item $\pi_{k-1}(W')$ equals $\pi_{k-1}(W)$ modulo the subgroup
  generated by $[f]$; 
\item $\pi_k(W')$ equals $\pi_k(W)$ if $[f]\in\pi_{k-1}(W)$ is non-torsion,
  and $\pi_k(W)\oplus\Z$ if $[f]$ is torsion; the same holds for $H_k$ in place of $\pi_k$. 
\end{enumerate} 
For property (iv), consider the long exact sequence
$$
   0=\pi_{k+1}(W',W)\to \pi_k(W) \into \pi_k(W')\stackrel{\phi}{\to} \pi_k(W',W) \stackrel{\psi}{\to}
   \pi_{k-1}(W). 
$$
Let $[F]$ be the generator of $\pi_k(W',W)\cong\Z$ that is mapped to $[f]$ under $\psi$. 
If $[f]$ is non-torsion, then $\im\,\phi = \ker\psi = \{0\}$ and thus $\pi_k(W)\cong\ker\phi = \pi_k(W')$. 
If $[f]$ is torsion, say $m[f]=0$ for some $m\in\N$, then $\im\,\phi=\ker\psi\cong\Z$ generated by
$m[F]$. In the resulting exact sequence $\pi_k(W)\into\pi_k(W')\stackrel{\phi}{\to}\Z=\la m[F]\ra\to 0$
the map $\phi$ has a right inverse (gluing the $k$-disk $mF$ to a $k$-disk in $W$ filling $mf$), 
so it follows that  $\pi_k(W')\cong\pi_k(W)\oplus\Z$. The proof for $H_k$ in place of $\pi_k$ is analogous.

Hence we can successively attach handles of indices $1,2,\dots,n-1$
to obtain a domain $W'\subset\C^n$ containing $W$ with
\begin{itemize}
\item $H_i(W';G)=0$ for all $i\geq n$ and for every $G$;
\item $\pi_i(W')=0$ for all $i<n-1$.
\end{itemize} 
By the Hurewicz theorem, $H_i(W';\Z)=0$ for all $i<n-1$ and
$H_{n-1}(W';\Z)\cong \pi_{n-1}(W')$. Moreover, by 
the universal coefficient theorem, $H_n(W';G)=0$ for every $G$ implies 
that $H_{n-1}(W';\Z)$ is torsion free. Hence we can attach
$n$-handles to $W'$ along spheres representing a basis of
$\pi_{n-1}(W')$ to obtain a domain $\wt W\subset\C^n$ containing $W$ with
\begin{itemize}
\item $H_i(\wt W;\Z)=0$ for all $i>n$;
\item $H_i(\wt W;\Z)\cong \pi_i(\wt W)=0$ for all $i\leq n-1$;
\item $H_n(\wt W;\Z)\cong H_n(W';Z)=0$. 
\end{itemize} 
Here the last condition follows from property (iv) above aplied to $H_n$. 
So $\wt W$ is simply connected and $H_i(\wt W)=0$ for all $i>0$. Since
$\dim\wt W\geq 6$, the $h$-cobordism theorem~\cite{Sma62} implies that
$\wt W$ is diffeomorphic to the closed ball $B^{2n}$. 

Now recall that $\wt W$ is obtained from $W$ by attaching handles of
indices $\leq n$. So we can extend the given defining Morse function
$\psi:W\to\R$ to a Morse function $\wt\psi:\wt W\to\R$ without
critical points of index $>n$ such that $\wt W=\{\wt\psi\leq
1\}$. Since $\wt W$ is diffeomorphic to $B^{2n}$, the embedding $\wt
W\into\C^n$ is isotopic to the standard embedding $B^{2n}\into\C^n$,
so we can extend $\wt\psi$ to an exhausting Morse function on $\C^n$
(still denoted by $\wt\psi$) without critical points outside $\wt W$,
and equal to $\wt\psi(z)=|z|^2$ at infinity. 
\end{proof}

%%%%%%%%%%%%%%%%%%%%%%%%%%%%%%%%%%%%%%%%%%%%%%%%%%%%%%%%%%%%%%%%%%%%%
\section{Complex analytic preliminaries}\label{sec:hol-prelim}
%%%%%%%%%%%%%%%%%%%%%%%%%%%%%%%%%%%%%%%%%%%%%%%%%%%%%%%%%%%%%%%%%%%%%

\subsection{Criteria for rational and polynomial convexity}\label{sec:criteria}
The proofs of Theorems~\ref{thm:rational} and~\ref{thm:polynomial} are
based on the following characterizations of rational 
and polynomial convexity. 
Consider the following condition on a 
$J$-convex domain $W$ in a complex manifold $(X,J)$: 
\begin{itemize}
\item[(R)] There exists a $J$-convex function $\phi:W\to\R$ such that
$W=\{\phi\leq 0\}$, and the form $-dd^\C\phi$ on $W$ extends to a
K\"ahler form $\om$ on the whole $X$.
\end{itemize}

The following criterion for rational convexity was proved by
S.~Nemirovski~\cite{Nem08} as a corollary of a result of J.~Duval and
N.~Sibony~\cite[Theorem 1.1]{DuvSib95}.  

\begin{criterion}\label{prop:rational}
An $i$-convex domain $W\subset\C^n$ is rationally convex if and only
if it satisfies condition {\rm (R)}. 
\end{criterion}

\begin{proof}
The ``if'' is the first proposition in~\cite{Nem08}. The following
proof of the ``only if'' was pointed out to us by S.~Nemirovski.
Let $W\subset\C^n$ be a rationally convex domain. Let
$\phi:U\to\R$ be an $i$-convex function on a bounded open neighborhood
$U$ of $W$ such that $W=\{\phi\leq 0\}$. Pick a cutoff function
$\rho:\C^n\to[0,1]$ which equals $0$ outside $U$ and $1$ on a smaller
open neighbourhood $V\subset U$ of $W$. 
Let $B\subset\C^n$ be a closed ball around the
origin containing $U$, and let $\psi:\C^n\to\R$ be such that
$-dd^\C\psi$ vanishes on $B$ and is strictly positive outside $B$. 
By~\cite[Theorem 2.1]{DuvSib95}, for every
$z\notin W$ there exists a nonnegative closed $(1,1)$-form $\om_z$
which vanishes on $W$ and is strictly positive on an open
neighbourhood $V_z$ of $z$. Finitely many such neighborhoods
$V_{z_1},\dots,V_{z_N}$ cover the compact set $B\setminus V$. Then for 
sufficiently large constants $c_i>0$,
$$
   \om := -dd^\C(\rho\phi) -dd^\C\psi + \sum_{i=1}^Nc_i\om_{z_i} 
$$
is a K\"ahler form with $\om|_W=-dd^\C\phi$. 
\end{proof}

We will also need the following classical criterion  for polynomial convexity which goes back to K.~Oka's paper~\cite{Oka53}
(see also ~\cite[Theorem 1.3.8]{Sto07}). 
  
\begin{criterion}\label{prop:polynomial}
An $i$-convex domain $W\subset\C^n$ is polynomially convex if
and only if there exists an exhausting $i$-convex function
$\phi:\C^n\to\R$ such that $W=\{\phi\leq 0\}$.
\end{criterion}
 
\begin{remark}\label{rem:T}
This criterion shows that to any polynomially convex domain
$W\subset\C^n$ we can attach handles of indices at most $n$ to obtain
a ball $B^{2n}\subset\C^n$. From the homology exact sequence of the
pair $(B^{2n},W)$,
$$
   0 = H_{n+1}(B^{2n},W;G)\to H_n(W;G) \to H_n(B^{2n};G)=0,
$$
we conclude that $H_n(W;G)=0$ for any coefficient group $G$. 
\end{remark}

We will also need the following lemma, where $\phi_\st(z):=|z|^2/4$
and $\om_\st:=-dd^\C\phi_\st$ denote the standard $i$-convex function and
K\"ahler form on $\C^n$. 

\begin{lemma}\label{lem:infty}
Let $A\subset\C^n$ be a compact subset and $B\subset\C^n$ a closed
ball with $A\subset\Int B$.  

(a) For every $i$-convex function $\phi:\C^n\to\R$ there exist an
$i$-convex function $\wt\phi$ which equals $\phi$ on $A$, and
$c\,\phi_\st$ outside $B$ for some constant $c>0$. 

(b) For every K\"ahler form $\om$ on $(\C^n,i)$ there exist a K\"ahler
form $\wt\om$ which equals $\om$ on $A$, and $c\,\om_\st$ outside $B$
for some constant $c>0$. 
\end{lemma}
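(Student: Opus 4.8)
The plan is to prove Lemma~\ref{lem:infty} by modifying a given $i$-convex function (resp.\ K\"ahler form) only in the annular region between $A$ and the complement of $B$, using a cutoff interpolation against $c\,\phi_\st$ (resp.\ $c\,\om_\st$) and exploiting the large positive curvature of $\phi_\st$ to absorb the errors introduced by the cutoff.

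For part (a): Fix a slightly larger closed ball $B'$ with $A\subset\Int B\subset B\subset\Int B'$, and choose a smooth cutoff $\rho:\C^n\to[0,1]$ that equals $1$ near $A$ and $0$ outside $\Int B$. The naive candidate
\[
   \wt\phi := \rho\,\phi + (1-\rho)\,c\,\phi_\st
\]
agrees with $\phi$ near $A$ and with $c\,\phi_\st$ outside $B$, but in the transition region $\om_{\wt\phi}=-dd^\C\wt\phi$ contains cross terms involving $d\rho$ and $\phi-c\,\phi_\st$ that can spoil positivity. The key point is that on the compact transition region the quantity $\om_{\phi_\st}=\om_\st$ is strictly positive and bounded below, so by choosing $c$ large the term $(1-\rho)\,c\,\om_\st$ dominates the bounded first-order error terms coming from $d\rho\,d^\C(\phi-c\phi_\st)$. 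A cleaner way to organize this is to first observe that $\phi$ and $c\,\phi_\st$ differ, near $A$ and outside $B$ respectively, by functions whose Hessians we control, and to interpolate at the level of functions while keeping $i$-convexity via the standard ``convex combination plus a large convex bump'' trick: one sets $\wt\phi:=\max\nolimits_{\smooth}(\phi,\,c\,\phi_\st-K)$ for a suitable constant $K$ and large $c$, using that the smoothed maximum of two $i$-convex functions is $i$-convex (see~\cite{CieEli12}); for $c$ large and $K$ chosen so that $c\,\phi_\st-K<\phi$ on $A$ while $c\,\phi_\st-K>\phi$ outside $B$, the smoothed maximum equals $\phi$ on $A$ and $c\,\phi_\st-K$ (hence $c\,\phi_\st$ up to an irrelevant additive constant, which we then drop by relabeling) outside $B$.

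For part (b): The same strategy works directly at the level of forms, and is in fact simpler since positivity of $(1,1)$-forms is a pointwise convex condition. Choose the cutoff $\rho$ as above and set
\[
   \wt\om := \rho\,\om + (1-\rho)\,c\,\om_\st + t\,\om_\st
\]
for suitable constants $c,t>0$, or more economically modify $\om$ only by adding $-dd^\C$ of a compactly controlled potential. The cleanest argument: since $\om$ is K\"ahler on the compact set $B$, it is bounded, and $\om_\st$ is strictly positive there; hence for $c$ sufficiently large the form $\rho\,\om+(1-\rho)c\,\om_\st$ is strictly positive on all of $B$ (a convex combination of positive forms, after scaling $\om_\st$), equals $\om$ near $A$ and $c\,\om_\st$ outside $B$. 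One must check closedness, which holds provided $\om$ and $\om_\st$ are both closed and the interpolation is done so as to preserve $d$-closedness; this forces the interpolation to be carried out not on the level forms themselves but on their local potentials, using that both $\om$ and $\om_\st$ are exact as $-dd^\C$ of global functions on $\C^n$, thereby reducing part (b) to part (a).

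The main obstacle is closedness of $\wt\om$ in part (b): a naive cutoff interpolation $\rho\,\om+(1-\rho)c\,\om_\st$ need not be closed, since $d\rho\wedge(\om-c\,\om_\st$-primitives$)$ survives. The remedy, and the point I would emphasize, is to pass to primitives: write $\om=-dd^\C\phi$ for a global $i$-convex $\phi$ (such $\phi$ exists on $\C^n$ since every K\"ahler form on $\C^n$ is $dd^\C$-exact by the $\partial\bar\partial$-lemma on Stein manifolds, or simply because $H^2(\C^n)=0$ together with $\C^n$ being Stein guarantees a global potential), apply part (a) to $\phi$ to obtain $\wt\phi$ equal to $\phi$ on $A$ and $c\,\phi_\st$ outside $B$, and then set $\wt\om:=-dd^\C\wt\phi=\om_{\wt\phi}$. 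This is automatically closed, equals $\om$ on $A$ and $c\,\om_\st$ outside $B$, and is K\"ahler because $\wt\phi$ is $i$-convex; thus part (b) follows immediately from part (a), and all the analytic work is concentrated in the single interpolation of part (a) carried out via the smoothed maximum construction.
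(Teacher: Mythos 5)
Your proposal follows essentially the same route as the paper's proof: in part (a) you take a smoothed maximum of $\phi$ with a reparametrized copy of $\phi_\st$ (the paper uses $\smoothmax(\phi,\alpha\circ\phi_\st)$ with $\alpha$ convex, increasing, and equal to $cx$ for large $x$; your $c\,\phi_\st-K$ is just the affine special case $\alpha(x)=cx-K$), and in part (b) you reduce to (a) by choosing a global potential $\om=-dd^\C\phi$ on $\C^n$, exactly as the paper does via the $d$- and $\pbar$-Poincar\'e lemmas.

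One caveat. The step ``hence $c\,\phi_\st$ up to an irrelevant additive constant, which we then drop by relabeling'' is not legitimate: your construction produces $\wt\phi=c\,\phi_\st-K$ outside $B$, and no relabeling turns this into $c'\phi_\st$ --- adding $K$ back destroys the equality $\wt\phi=\phi$ on $A$, and $c\,\phi_\st-K$ differs from every $c'\phi_\st$ by a constant, not a factor. In fact the constant cannot be removed in general: already for $n=1$, if $\phi=\phi_\st-M$ with $M$ large, $A=\{|z|\le 1\}$ and $B=\{|z|\le 1.01\}$, then no $i$-convex $\wt\phi$ can equal $\phi$ on $A$ and $c\,\phi_\st$ outside $B$, because circle averages of a subharmonic function are convex in $\log|z|$, and the required jump by roughly $M$ across the thin shell forces a secant slope that exceeds the slope $c(1.01)^2/2$ available on the outside. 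You should be aware, however, that the paper's own proof has the identical blemish: a convex $\alpha$ with $\alpha(x)=cx$ near infinity satisfies $\alpha(x)\ge cx\ge 0$ for all $x\ge 0$, so the requirement $\alpha\circ\phi_\st<\phi$ on $A$ is unachievable whenever $\min_A\phi\le 0$. What both arguments actually establish is the lemma with $c\,\phi_\st$ replaced by $c\,\phi_\st+d$ for some constant $d$, and this weaker form suffices for every application in the paper; in particular, in part (b) the constant is killed by $-dd^\C$, so your deduction of (b) from (a) yields the stated conclusion $\wt\om=c\,\om_\st$ outside $B$ verbatim.
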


\begin{proof}
(a) We pick a smooth function $\alpha:\R\to\R$ with the following
  properties:
\begin{itemize}
\item $\alpha'>0$, $\alpha''\geq 0$, and $\alpha(x)=cx$ for large $x$
  and some constant $c>0$;
\item $\alpha\circ\phi_\st<\phi$ on $A$, and
  $\alpha\circ\phi_\st>\phi$ near $\p B$.   
\end{itemize}
(To find $\alpha$, we choose a constant $b$ such that
$\phi_\st+b<\phi$ on $A$, then increase the function $x\mapsto x+b$
steeply near $\phi_\st^{-1}(\p B)$ to arrange
$\alpha\circ\phi_\st>\phi$ near $\p B$, and finally change it outside
$B$ to arrange $\alpha(x)=cx$ near infinity.) Then a smoothing of the
function $\max(\alpha\circ\phi_\st,\phi)$ gives an $i$-convex function
$\wt\phi$ with the desired properties.  

(b) By the $d$-Poincar\'e Lemma and the $\pbar$-Poincar\'e lemma on
$\C^n$ (see e.g.~\cite[Corollary 1.3.9]{Huy05}, and also the proof of
Lemma~\ref{lem:potential} below), we can write
$\om=-dd^\C\phi$ for some function $\phi:\C^n\to\R$. Then
$\wt\om:=-dd^\C\wt\phi$ for the function $\wt\phi$ provided by (a) has
the desired properties.  
\end{proof}

\begin{remark}\label{rem:constant-c} 
For any $c>0$ there exists a K\"ahler form $\om$ on $\C^n$ which
coincides with $c\om_\st$ on a ball $B$ of radius $R$ and with
$\om_\st$ outside a ball of the radius $2cR$.  Hence, in the statement
of Lemma \ref{lem:infty}(b) we can set $c=1$ if we are allowed to
choose the ball $B$ sufficiently large. 
\end{remark}

%%%%%%%%%%%%%%%%%%%%%%%%%%%%%%%%%%%%%%%%%%%%%%%%%%%%%%%%%%%%%%%%%%%%%
\subsection{Construction of K\"ahler potentials}\label{sec:potentials}
%%%%%%%%%%%%%%%%%%%%%%%%%%%%%%%%%%%%%%%%%%%%%%%%%%%%%%%%%%%%%%%%%%%%%

The following proposition provides K\"ahler
potentials with prescribed behavior on a Lagrangian submanifold. 

\begin{prop}\label{prop:potential}
Let $L$ be a real analytic Lagrangian submanifold (possibly noncompact
and/or with boundary) in a K\"ahler manifold $(X,J,\om)$ with real
analytic K\"ahler form $\om$, and let
$\rho:L\to\R$ be any real analytic function. Then there exists a
unique real analytic function $\phi$ on a neighborhood of $L$
satisfying  
$$
   -dd^\C\phi=\om,\qquad \phi|_{L}=\rho, \qquad
   (d^\C\phi)|_{L}=0. 
$$ 
\end{prop}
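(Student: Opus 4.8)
The plan is to reduce the statement to a local problem in adapted holomorphic coordinates, solve it there by correcting an arbitrary K\"ahler potential with a pluriharmonic function, and then glue the local solutions by means of the uniqueness statement. First I would record two reductions. Since $L$ is Lagrangian in the K\"ahler manifold $(X,J,\om)$, it is totally real of maximal dimension: if $v,Jv\in T_pL$ then $\om(v,Jv)>0$ would contradict $\om|_{TL}=0$, so $T_pL\cap JT_pL=0$ and $T_pX=T_pL\oplus JT_pL$. Being also real analytic, $L$ can be straightened locally, i.e.\ around each point of $L$ there are holomorphic coordinates $z=x+iy$ on $X$ in which $L=\{y=0\}=\R^n$. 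All three conditions in the proposition are invariant under biholomorphisms (as $d^\C$ is built from $J$), and $\om$ and $\rho$ remain real analytic, so it suffices to produce the solution in such a chart and show it is unique. In these coordinates, for $v=\partial/\partial x_k\in TL$ one computes $(d^\C\phi)(v)=d\phi(Jv)=\partial\phi/\partial y_k$, so the two conditions $\phi|_L=\rho$ and $(d^\C\phi)|_L=0$ are exactly equivalent to prescribing the full $1$-jet of $\phi$ along $L$: namely $\phi(x,0)=\rho(x)$ and $\partial\phi/\partial y_k(x,0)=0$ for all $k$.

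For the existence step, I would first write $\om=-dd^\C\phi_0$ for some real analytic $\phi_0$ defined near the chart, using the local $d$- and $\dbar$-Poincar\'e lemmas in the real analytic category exactly as in the proof of Lemma~\ref{lem:infty}(b). Since $dd^\C$ annihilates pluriharmonic functions, I look for the answer in the form $\phi=\phi_0+\Re F$ with $F$ holomorphic; then automatically $-dd^\C\phi=\om$, and only the $1$-jet on $L$ must be matched. Writing the boundary value $F|_{\R^n}=\sigma+i\chi$ with $\sigma,\chi$ real analytic, a short computation gives $(\Re F)|_L=\sigma$ and $\partial(\Re F)/\partial y_k|_L=-\partial\chi/\partial x_k$. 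Hence I must arrange $\sigma=\rho-\phi_0|_L$, which simply defines $\sigma$, and $\partial\chi/\partial x_k=\tau_k$ for all $k$, where $\tau_k:=\partial\phi_0/\partial y_k|_L$.

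The \emph{main obstacle}, and the heart of the proof, is that the system $\partial\chi/\partial x_k=\tau_k$ is overdetermined (one unknown $\chi$, many equations), so it is solvable only if the $1$-form $\tau=\sum_k\tau_k\,dx_k$ on $L$ is closed; this is precisely why a naive Cauchy--Kovalevskaya attempt, with $L$ as initial locus of codimension $n$, does not apply directly. This is exactly where the Lagrangian hypothesis enters. Indeed $\tau=(d^\C\phi_0)|_L$, so $d\tau=(dd^\C\phi_0)|_L=(-\om)|_L=0$ because $L$ is Lagrangian. By the Poincar\'e lemma I then solve $d\chi=\tau$ for a real analytic $\chi$, let $F$ be the holomorphic extension of $\sigma+i\chi$ from $\R^n$ to a neighborhood in $\C^n$, and obtain a local real analytic solution $\phi=\phi_0+\Re F$.

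Finally, uniqueness simultaneously finishes the gluing. If $\psi=\phi_1-\phi_2$ is the difference of two solutions, then $\psi$ is pluriharmonic with $\psi|_L=0$ and $(d^\C\psi)|_L=0$; writing $\psi=\Re F$ with $F|_{\R^n}=i\chi$ purely imaginary, the vanishing of the normal $1$-jet forces $\partial\chi/\partial x_k=0$, so $\chi$ is locally constant and $\psi=\Re(ic)=0$ near $L$. This proves the asserted uniqueness, and it also guarantees that the locally constructed solutions agree on overlaps; they therefore patch together to a single real analytic function $\phi$ on an open neighborhood of $L$ (working in charts slightly extending across $\partial L$ in the case with boundary), as required.
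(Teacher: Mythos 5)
Your proof is correct and follows essentially the same route as the paper's: both solve $-dd^\C\phi_0=\om$ with an arbitrary local potential, correct it by the real part of a holomorphic extension of data built from $\rho$ and a primitive of the pulled-back form $(d^\C\phi_0)|_L$ --- whose closedness is exactly the Lagrangian condition $\om|_L=0$, the key point in both arguments --- and then glue the local solutions via uniqueness. The only minor differences are that you merge the paper's two correction steps (first the value, then the normal derivative) into a single holomorphic function $F$, and your uniqueness argument represents the pluriharmonic difference as the real part of a holomorphic function and applies Cauchy--Riemann, whereas the paper expands the difference in a Taylor series in $y$ and kills it order by order; both versions rest on real-analytic unique continuation.
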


We denote by $D^n\subset\R^n\subset\C^n$ the closed unit disc, and by
$\Op D^n$ a sufficiently small (but not specified) open neighborhood
of $D^n$ in $\C^n$. Covering an arbitrary manifold $L$ by discs and
using uniqueness, Proposition~\ref{prop:potential} is an immediate
consequence of the following special case. 

\begin{lemma}\label{lem:potential}
Let $\om$ be real analytic real $(1,1)$-form on $\Op D^n$ with $d\om=0$ and
$\om|_{D^n}=0$, and let $\rho:D^n\to\R$ be any real analytic
function. Then there exists a unique real analytic function $\phi:\Op 
D^n\to\R$ satisfying 
$$
   -dd^\C\phi=\om,\qquad \phi|_{D^n}=\rho, \qquad
   \frac{\p\phi}{\p y_k}\Bigl|_{D^n}=0 \text{ for all }k=1,\dots,n. 
$$ 
\end{lemma}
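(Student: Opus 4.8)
The plan is to remove the equation $-dd^\C\phi=\om$ by subtracting one fixed potential, turning the problem into a \emph{pluriharmonic} interpolation problem in which the hypothesis $\om|_{D^n}=0$ reappears, not as a side condition, but as the exact integrability condition making the remaining construction possible. First I would produce a single real analytic potential for $\om$: since $\om$ is real analytic, closed and of type $(1,1)$, and $\Op D^n$ is contractible, the same combination of the $d$-Poincar\'e and $\dbar$-Poincar\'e lemmas invoked in the proof of Lemma~\ref{lem:infty}(b) yields (working in the real analytic category, where the radial homotopy operator and the $\dbar$-solution preserve real analyticity) a real analytic real-valued $u:\Op D^n\to\R$ with $-dd^\C u=\om$. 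Writing the sought solution as $\phi=u+h$, the equation $-dd^\C\phi=\om$ is equivalent to $dd^\C h=0$, i.e.\ to $h$ being pluriharmonic, and the Cauchy data become $h|_{D^n}=\rho-u|_{D^n}=:a$ and $\partial\phi/\partial y_k|_{D^n}=0$, i.e.\ $\partial h/\partial y_k|_{D^n}=-\partial u/\partial y_k|_{D^n}$.

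Next I would parametrize pluriharmonic functions. On the simply connected set $\Op D^n$ every real-valued pluriharmonic $h$ equals $\Re F$ for a holomorphic $F$, unique up to an additive imaginary constant; and $F$ is itself determined by its restriction $G:=F|_{D^n}$, because $D^n\subset\R^n$ is totally real of maximal dimension, so $F$ is the unique holomorphic extension of the real analytic function $G$ (defined on a possibly smaller neighborhood, which is harmless). Writing $z_k=x_k+iy_k$ and $G=G_1+iG_2$, the Cauchy--Riemann relation $\partial F/\partial y_k=i\,\partial F/\partial x_k$ gives along $D^n$
$$h|_{D^n}=G_1,\qquad \frac{\partial h}{\partial y_k}\Bigl|_{D^n}=-\frac{\partial G_2}{\partial x_k}.$$
Hence the two Cauchy conditions translate into $G_1=a$, which fixes $\Re G$ outright, and the gradient system $\partial G_2/\partial x_k=\partial u/\partial y_k|_{D^n}$ for $k=1,\dots,n$ on $D^n$.

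The crux of the argument, and the only point where the hypothesis $\om|_{D^n}=0$ is used, is the solvability of this gradient system: it is solvable exactly when the real analytic $1$-form $\sum_k(\partial u/\partial y_k|_{D^n})\,dx_k$ on $D^n$ is closed, i.e.\ when $\partial^2 u/\partial x_j\partial y_k=\partial^2 u/\partial x_k\partial y_j$ on $D^n$. A direct computation of the complex Hessian $u_{j\bar k}=\partial^2 u/\partial z_j\partial\bar z_k$ shows that $\Im u_{j\bar k}$ is a nonzero constant multiple of $(\partial^2 u/\partial x_j\partial y_k-\partial^2 u/\partial x_k\partial y_j)$, and that $\om(\partial_{x_j},\partial_{x_k})$ equals the same expression up to a nonzero constant; thus closedness of the $1$-form is precisely the condition $\om|_{D^n}=0$. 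I expect this Hessian identity to be the main (though routine) computational step. Since $D^n$ is simply connected, closedness yields a real analytic primitive $G_2$, unique up to an additive real constant.

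Finally, for existence I would set $G:=a+iG_2$, let $F$ be its holomorphic extension, and put $\phi:=u+\Re F$; this is real analytic and satisfies all three required conditions by construction. For uniqueness, the additive real constant in $G_2$ changes $F$ only by an imaginary constant and hence leaves $h=\Re F$, and therefore $\phi$, unchanged; equivalently, any solution of the homogeneous problem ($\om=0$, $\rho=0$, vanishing $y$-derivatives) forces $G_1=0$ and $\partial G_2/\partial x_k=0$, so $G_2$ is constant, $\Re F=0$, and $\phi=0$. This establishes both the existence and the asserted uniqueness of $\phi$.
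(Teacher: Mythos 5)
Your proof is correct and follows essentially the same route as the paper's: produce a real analytic potential $u$ via the $\p\pbar$-lemma, then correct it by $\Re F$ where $F$ is the holomorphic extension of a function on $D^n$ whose real part fixes the boundary value $\rho$ and whose imaginary part is a primitive of the tangential form $d^\C u|_{D^n}$ --- a primitive that exists precisely because $\om|_{D^n}=0$. The only cosmetic differences are that the paper carries out these two corrections in separate steps and proves uniqueness by a power-series/unique-continuation argument along $D^n$ instead of your pluriharmonic parametrization $\phi-u=\Re F$; both variants are sound.
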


The following simple proof was pointed out to us by the referee. 

\begin{proof}
For uniqueness, note that the difference of two solutions is a real
analytic function $\phi:\Op D^n\to\R$ satisfying 
$$
   -dd^\C\phi=0,\qquad \phi|_{D^n}=0, \qquad
   \frac{\p\phi}{\p y_k}\Bigl|_{D^n}=0 \text{ for all }k=1,\dots,n. 
$$ 
The last two conditions imply that near a point on $D^n$ we can write
$$
   \phi(x,y) = \phi_p(x,y) + O(|y|^{p+1}),\qquad
   \phi_p(x,y) = \sum_{|I|=p}a_I(x)y^I 
$$
for some $p\geq 2$. The first condition yields
$$
   0 = -dd^\C\phi = \sum_{k,\ell=1}^n\frac{\p^2\phi_p}{\p y_k\p
     y_\ell}dx_k\wedge dy_\ell + O(|y|^{p-1}),
$$
which implies $a_I(x)=0$ for all $I$. Thus $\phi$ vanishes to infinite
order along $D^n$, and unique continuation implies $\phi\equiv 0$. 

For existence, let $\om$ and $\rho$ as in the lemma be given.  By the $\p\pbar$-lemma (see e.g.~\cite{Huy05}) 
there exists a real analytic function $\psi:\Op D^n\to\R$ with $-dd^\C\psi=\om$. Let $f:\Op D^n\to\C$ 
be the holomorphic extension of $\rho-\psi$. Then the real analytic function $\psi':=\psi+\Re f$ satisfies
$\psi'|_{D^n}=\rho$ and $-dd^\C\psi'=-dd^\C\psi=\om$, where the last property follows from $\p\pbar=-\pbar\p$ 
and holomorphicity of $f$ via
$$
    -dd^\C\Re f = i\p\pbar(f+\bar f) = i(\p\pbar f-\pbar\p\bar f) = 0. 
$$
After renaming $\psi'$ back to $\psi$, we may thus assume $\psi|_{D^n}=\rho|_{D^n}$ and $-dd^\C\psi=\om$.
Since $d(d^\C\psi|_{D^n})=-\om|_{D^n}=0$, the Poincar\'e lemma (see e.g.~\cite{GuiPol74}) yields a
real analytic function $\vartheta:D^n\to\R$ with $d^\C\psi|_{D^n}=d\vartheta$. Let $g:\Op D^n\to\C$ 
be the holomorphic extension of $i\vartheta$. Then the real analytic function $\phi:=\psi+\Re g:\Op D^n\to\R$
still satisfies $-dd^\C\phi=\om$ (by the argument above) and $\phi|_{D^n}=\rho+\Re(i\vartheta)=\rho$. 
Moreover, holomorphicity of $g$ implies
\begin{align*}
   d^\C\phi|_{D^n} &= d^\C\psi|_{D^n} + \frac{1}{2}d^\C(g+\bar g)|_{D^n} 
   = d\vartheta + \frac{1}{2}(dg\circ i+d\bar g\circ i)|_{D^n} \cr   
   &= d\vartheta + \frac{1}{2}(i\circ dg - i\circ d\bar g)|_{D^n} 
   = d\vartheta + \frac{1}{2}\bigl(i\circ (id\vartheta) - i\circ (-id\vartheta)\bigr) \cr  
   &= d\vartheta - d\vartheta = 0. 
\end{align*}
This concludes the proof of Lemma~\ref{lem:potential}, and thus of Proposition~\ref{prop:potential}. 
\end{proof}

\subsection{Attaching isotropic discs to rationally convex
  domains}\label{sec:attaching-discs}
%%%%%%%%%%%%%%%%%%%%%%%%%%%%%%%%%%%%%%%%%%%%%%%%%%%%%%%%%%%%%%%%%%

Using Proposition~\ref{prop:potential}, we now prove that rational
convexity persists under suitable attaching of isotropic discs.  
%
%Theorem~\ref{thm:rational} will be a consequence of the $h$-principle
%for Lagrangian caps in~\cite{EliMur13} and the following 
%
%\begin{thm}\label{thm:rational-attaching}
%\marginpar{Can we remove ``real analytic''?}
%Let $W$ be a $J$-convex domain satisfying condition {\rm (R)} in a
%complex manifold $(X,J)$. Let $\Delta\subset X\setminus\Int W$ 
%be a real analytic $k$-disc, transversely attached to $\p W$ along
%$\p\Delta$, such that $\om|_\Delta=0$ and
%$d^\C\phi|_{\p\Delta}=0$. Then $W\cup\Delta$ has arbitrarily small
%$J$-convex neighborhoods satisfying condition {\rm (R)}. 
%\end{thm}
Given a $J$-convex domain $W$ in a complex manifold $(V,J)$, we say
that a totally real disc $\Delta\subset V\setminus\Int W$ is {\em
  $J$-orthogonally} attached to $\p W$ along $\p\Delta$ if
$J(T_x\Delta)\subset T_x(\p W)$ for all $x\in\p\Delta$.  
The following result is probably known to specialists,
though we could not find it in the literature; S.~Nemirovski has
informed us that he knew this fact. 
%We will prove it in   Section \ref{sec:proof-attaching}    in a more
%precise form, see Proposition~\ref{prop:rational-attaching} below.  

\begin{prop}\label{prop:rational-attaching}
%\marginpar{Can we remove ``real analytic''?}
Let $W$ be a $J$-convex domain in a complex manifold $(V,J)$. 
Suppose there exists a defining $J$-convex function
$\phi:W=\{\phi\leq c\}\to\R$ such that $-dd^\C\phi$ extends to a
K\"ahler form $\om$ on $V$. Let $\Delta\subset V\setminus\Int W$ 
be a real analytic $k$-disc, $J$-orthogonally attached to $\p W$ along
$\p\Delta$, such that $\om|_\Delta=0$.
% and $d^\C\phi|_{\p\Delta}=0$. 

Then for every open neighborhood $U$ of $W\cup\Delta$ there exists a
$J$-convex domain $\wt W\subset U$ with $W\subset\Int\wt W$ and a
defining $J$-convex function $\wt\phi:\wt W=\{\wt\phi\leq\wt c\}\to\R$
such that
\begin{itemize} 
\item $\wt\phi|_W=\phi$, and $\wt\phi$ has a unique index $k$ critical
  point in $\wt W\setminus W$ whose stable manifold is $\Delta$;
\item $-dd^\C\wt\phi$ extends to a K\"ahler form $\wt\om$ on $V$ which
  agrees with $\om$ outside $U$.  
\end{itemize}
\end{prop}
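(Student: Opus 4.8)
The plan is to build $\wt\phi$ by gluing the given $\phi$ on $W$ to a model potential constructed near $\Delta$ with the help of Proposition~\ref{prop:potential}. Since every modification takes place inside $U$, I first reduce to real analytic data near $\Delta$ (arranged by an approximation supported near $\Delta$ inside $U$, or assumed from the outset), so that $\om$, $\phi$ and $\Delta$ are real analytic there. The key geometric observation is that $J$-orthogonal attachment means exactly that $\p\Delta$ is isotropic for the contact structure $\ker(d^\C\phi|_{T\p W})$ on $\p W=\{\phi=c\}$; equivalently, the Liouville (gradient) field $X_\phi$ is $\om$-orthogonal to $T\Delta$ along $\p\Delta$.

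Next I would extend the isotropic $k$-disc $\Delta$ to a real analytic Lagrangian $L$ of dimension $n$ in $V$, chosen near $\p\Delta$ so that $L$ is invariant under the flow of $X_\phi$. This is possible because $X_\phi$ is $\om$-orthogonal to the isotropic space $T\Delta$ there and can therefore be incorporated into a Lagrangian extension. With this choice $d^\C\phi|_{TL}=0$ near $\p\Delta$, so $\phi$ itself solves the boundary value problem of Proposition~\ref{prop:potential} (with the given $\om$ and $\rho:=\phi|_L$) on the part of $L$ near $\p\Delta$. I then pick a real analytic $\rho:L\to\R$ that agrees with $\phi|_L$ near $\p\Delta$ and otherwise has a single nondegenerate critical point of Morse index $k$ whose stable manifold inside $L$ is exactly $\Delta$, with the remaining $n-k$ directions of $L$ ascending. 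Proposition~\ref{prop:potential} then produces a real analytic $\psi$ near $L$ with $-dd^\C\psi=\om$, $\psi|_L=\rho$ and $d^\C\psi|_L=0$.

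Two points then need checking. First, the critical structure of $\psi$: since $d^\C\psi|_L=0$ forces $d\psi$ to vanish on the transverse directions $JL$, while $d\psi|_{TL}=d\rho$, the critical points of $\psi$ are exactly those of $\rho$; and $i$-convexity of $\psi$ makes the $JL$ directions ascending at such a point, so $\psi$ has a unique critical point, of index $k$, whose stable manifold coincides with that of $\rho$, namely $\Delta$. Second, the matching: because $\phi$ and $\psi$ solve the same boundary value problem relative to $L$ near $\p\Delta$, the uniqueness clause of Proposition~\ref{prop:potential} gives $\psi=\phi$ on a neighborhood of $\p\Delta$ in $V$. I can therefore set $\wt\phi:=\phi$ on $W$ and $\wt\phi:=\psi$ on a thin neighborhood of $\Delta\setminus W$; these agree on the overlap, so $\wt\phi$ is a well-defined $J$-convex function with $\wt\phi|_W=\phi$ and $-dd^\C\wt\phi=\om$ throughout. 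Taking $\wt W=\{\wt\phi\le\wt c\}$ with $\wt c$ slightly above the critical value realizes $W$ with an index-$k$ handle attached along $\p\Delta$, lying in $U$ once the neighborhood is thin enough, and $\wt\om:=\om$ extends $-dd^\C\wt\phi$. Should exact agreement prove awkward, one can instead interpolate $\psi$ to $\phi$ near $\p\Delta$ by cutting off the pluriharmonic difference $\phi-\psi$; this leaves $-dd^\C$ equal to $\om$ outside a thin transition shell inside $U$ and still yields a Kähler form there when $\phi-\psi$ is small, which explains why $\wt\om$ need only agree with $\om$ outside $U$.

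The hard part, I expect, is the matching step: choosing the Lagrangian extension $L$ so that $\phi$ genuinely satisfies the boundary conditions of Proposition~\ref{prop:potential} near $\p\Delta$, so that uniqueness forces $\psi=\phi$ there and the glued form is honestly Kähler, together with the bookkeeping that the resulting sublevel set is a standard handle attachment inside $U$. The index computation and the real analytic reductions are by comparison routine.
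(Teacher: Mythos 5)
Your architecture is genuinely close to the paper's: the Lagrangian extension $L$ of the isotropic disc is exactly the core of the holomorphic handle the paper builds, and the paper likewise uses Lemma~\ref{lem:potential} to manufacture a potential for $\om$ along it and uniqueness to identify potentials. But your matching step --- which you correctly flag as the hard part --- has a gap that your scheme cannot close as stated. Both the existence and the uniqueness clauses of Proposition~\ref{prop:potential} require the data to be \emph{real analytic on a full neighborhood} of the relevant part of $L$, and that part includes $L\cap W$ near $\p\Delta$ (every neighborhood of $\p\Delta$ meets $W$). There, however, $\om=-dd^\C\phi$ with $\phi$ merely smooth, and you are not allowed to change either: the conclusion demands $\wt\phi|_W=\phi$ for the \emph{given} $\phi$, which forces $\wt\om|_W=-dd^\C\phi=\om|_W$ as well. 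So ``assume real analyticity from the outset'' or ``approximate near $\Delta$'' is not available, the application of Proposition~\ref{prop:potential} along $L\cap W$ is not licit, and the exact identity $\psi=\phi$ via unique continuation is unobtainable. Your fallback --- cutting off the pluriharmonic difference $h=\phi-\psi$ on a thin shell --- is precisely where the analytic difficulty sits: cutting off at scale $\delta$ costs $\delta^{-2}$ in $C^0$-bounds on $dd^\C(\chi h)$, and you give no reason why $h$ is small enough to compensate, so positivity of the glued form (K\"ahlerness) is lost. There is a germ of a rescue here (a smooth $dd^\C$-closed function vanishing to first order along a maximal totally real submanifold in fact vanishes to \emph{infinite} order along it, by the formal part of the uniqueness argument in Lemma~\ref{lem:potential}, after which a jet extension plus a scaled cutoff can beat the $\delta^{-2}$ loss), but this quantitative mechanism is the actual content of the step and is entirely absent from your proposal.

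The paper resolves exactly this tension with two moves you are missing. First, its Lemma~\ref{lem:extension} extends $\phi$ \emph{past} $\p W$ while keeping $-dd^\C\phi$ equal to a global K\"ahler form: the discrepancy form has a primitive vanishing identically on $W$, hence to infinite order at $\p W$, and a H\"ormander--Wermer solution of the $\dbar$-equation with $\eps$-scaled cutoffs yields a correction of size $O(\eps^{N-n-2})$ in $C^2$, so positivity survives. This creates a buffer collar, disjoint from $W$, inside which all subsequent real analytic approximations and modifications take place without touching $\phi|_W$. Second, inside that buffer the paper never matches anything against the non-analytic $\phi$: it first normalizes $\phi$ to coincide with the standard handle function $\psi_\st$ on the attaching region using \cite[Proposition 3.26]{CieEli12} (gluing of $i$-convex functions that agree only to \emph{first order} along a totally real submanifold --- achievable, unlike exact agreement), and only then invokes the uniqueness of Lemma~\ref{lem:potential} between two \emph{real analytic} functions, $\psi'$ and $\psi_\st$, inside the model. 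Separately, a smaller but real issue in your index computation: transverse positivity of $\Hess\psi$ along $L$ is not automatic from $i$-convexity. Since $-dd^\C\psi=\om$ forces (transverse Hessian) $=A-\Hess\rho$ with $A$ determined pointwise by $\om$, you must choose $\rho$ with $\Hess\rho<A$ along all of $\Delta$, not merely make the remaining directions ``ascending''; otherwise the critical point acquires extra negative directions and the sublevel set fails to be a thin index-$k$ handle.
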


\begin{remark}
The real analyticity assumption on the disc $\Delta$ can probably be
removed using appropriate results on solutions of the $\dbar$-equation.
\end{remark}

The proof uses the following extension lemma. 

\begin{lemma}\label{lem:extension}
In the situation of Proposition~\ref{prop:rational-attaching}, there
exists an extension of $\phi$ to a $J$-convex 
function $\phi_1$ on a slightly larger domain $W_1=\{\phi_1\leq
c_1\}\subset U$ with the following properties:
\begin{enumerate}
\item $-dd^\C\phi_1$ extends to a K\"ahler form $\om_1$ on $V$ which
  agrees with $\om$ outside $U$ such that $\om_1|_\Delta=0$;
\item $d^\C\phi_1|_{\Delta\cap(W_1\setminus W)}=0$. 
\end{enumerate}
\end{lemma}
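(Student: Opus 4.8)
The plan is to obtain $\phi_1$ by correcting $\phi$ only in a thin collar of $\p W$ near $\p\Delta$, with the correction supplied by Proposition~\ref{prop:potential}. Two elementary observations guide the construction. Since $\p\Delta\subset\p W=\{\phi=c\}$ and $\Delta$ is $J$-orthogonally attached, $J(T_x\Delta)\subset\ker d\phi_x$ for $x\in\p\Delta$, so the $1$-form $a:=d^\C\phi|_{T\Delta}=(d\phi\circ J)|_{T\Delta}$ vanishes on $\p\Delta$. Moreover, because $\om|_\Delta=0$, this form is closed on $\Delta$, as $da=(dd^\C\phi)|_\Delta=-\om|_\Delta=0$; being closed on the disc $\Delta$ it is exact, say $a=df$. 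Finally, for any candidate $\phi_1$ with $\om_1:=-dd^\C\phi_1$, naturality of $d$ gives the identity $\om_1|_\Delta=-d\bigl(d^\C\phi_1|_{T\Delta}\bigr)$; thus the requirement $\om_1|_\Delta=0$ in (i) is equivalent to $d^\C\phi_1|_{T\Delta}$ being closed on $\Delta$, while (ii) asks it to vanish on the collar.

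To apply Proposition~\ref{prop:potential} I first arrange real analyticity: approximating $\om$ by a real analytic K\"ahler form which still vanishes on $\Delta$ and absorbing the small $dd^\C$-exact difference into the potential via the $\p\pbar$-lemma, I may assume $\om$ is real analytic near $\Delta\setminus\Int W$. Since $\Delta$ is real analytic, totally real and isotropic, I then extend it near $\p\Delta$ to a real analytic Lagrangian submanifold $L$ of $(V,\om)$, the missing $n-k$ directions being filled in by a real analytic Lagrangian complement of $T\Delta$ inside $(T\Delta)^\om$; I choose this extension as adapted to $\phi$ as possible (keeping $d^\C\phi|_{TL}$ small near $\p\Delta$).

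Applying Proposition~\ref{prop:potential} with $\rho:=\phi|_L$ produces a real analytic $\psi$ near $L$ with $-dd^\C\psi=\om$, $\psi|_L=\phi|_L$ and $d^\C\psi|_L=0$; in particular $\psi|_\Delta=\phi|_\Delta$ and $d^\C\psi|_{T\Delta}=0$ along $\Delta$ near $\p\Delta$. Set $h:=\psi-\phi$, which is pluriharmonic ($dd^\C h=0$) and vanishes on $L$, and define $\phi_1:=\phi+\chi h$ for a cutoff $\chi$ that equals $1$ near the collar piece of $\Delta$ and $0$ on $W$, supported in a compact subset of $U$. Then $\phi_1=\phi$ on $W$, and where $\chi\equiv1$ we have $\phi_1=\psi$, giving $d^\C\phi_1|_{T\Delta}=0$ (property (ii)) and $\om_1|_\Delta=\om|_\Delta=0$. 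Extending the compactly supported correction by zero, $\om_1=\om-dd^\C(\chi h)$ is globally defined on $V$ and equals $\om$ outside $U$, so that once positivity is verified, $W_1:=\{\phi_1\le c_1\}$ for $c_1$ slightly above $c$ is the required domain.

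The main obstacle is to keep $\om_1=-dd^\C\phi_1$ K\"ahler through the collar while also securing $\om_1|_\Delta=0$ everywhere, including the transition zone. For the latter, using $\psi|_\Delta=\phi|_\Delta$ and $d^\C\psi|_{T\Delta}=0$ one computes $d^\C\phi_1|_{T\Delta}=(1-\chi)a$, hence $\om_1|_\Delta=-d\bigl((1-\chi)a\bigr)=d\chi\wedge a$; I make this vanish by taking $\chi|_\Delta$ to be a function of the primitive $f$, so that $d\chi|_{T\Delta}$ is proportional to $a=df$ and $d\chi\wedge a=0$. Positivity is the delicate point: a crude cutoff of $h$ across a collar of width $\delta$ produces terms in $dd^\C(\chi h)$ of size $O(\delta^{-1})$, so one cannot merely shrink the collar. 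This is handled by exploiting that $h$ vanishes on $L$ together with the $\phi$-adapted choice of $L$ (making $h$ and its first derivatives small near $\p\Delta$), and by cutting off on two scales---a fixed normal scale transverse to $\Delta$ and a much finer scale in the $\phi$-direction---so that $dd^\C(\chi h)$ stays dominated by $\om$. Making these two scales cooperate, so that convexity and the exact vanishing $\om_1|_\Delta=0$ hold at once, is the technical heart of the argument.
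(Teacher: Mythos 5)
Your overall scheme---extend $\phi$, build a matching potential, patch with a cutoff, worry about positivity---superficially parallels the paper's, but it has two genuine gaps. The first is structural: your correction is localized near $\Delta$, while Lemma~\ref{lem:extension}(i) demands that $-dd^\C\phi_1$ agree with a global K\"ahler form on \emph{all} of $W_1$, i.e.\ along the entire collar $W_1\setminus W$ surrounding $\p W$, not just near $\p\Delta$. Away from $\Delta$ your $\phi_1$ equals an (implicit, arbitrary) extension of $\phi$ past $\p W$, whose Levi form has nothing to do with $\om$. Correspondingly, your key claim that $h=\psi-\phi$ is pluriharmonic uses $-dd^\C\phi=\om$, which is a hypothesis only \emph{on} $W$; it fails on the region where $\chi$ acts. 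Concretely, where $\chi\equiv 1$ one has $-dd^\C\phi_1=-dd^\C\psi=\om$, but your global form is $\om_1=\om-dd^\C h=2\om+dd^\C\phi\neq\om$ unless $-dd^\C\phi=\om$ there; so $\om_1$ does not restrict to $-dd^\C\phi_1$ on $W_1$. Repairing this by ``choosing the extension of $\phi$ with $-dd^\C\phi=\om$'' is not available: that would mean extending a pluriharmonic function (locally the real part of a holomorphic function) across a strictly pseudoconvex boundary from the inside, which is impossible in general---indeed, producing an approximate substitute for such an extension is precisely the content of the lemma. For the same reason your verification of (ii) only works where $\chi\equiv1$: on the transition zone $d^\C\phi_1|_{T\Delta}=(1-\chi)a$, which is nonzero unless the extension already satisfies the $J$-orthogonality $a=0$ (this exact normalization is the paper's first step, which you replace by ``keeping $d^\C\phi|_{TL}$ small'').

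The second gap is the positivity step, which you yourself call ``the technical heart'' and do not carry out; moreover your proposed mechanism cannot work. Since $h$ vanishes only to first order along the middle-dimensional submanifold $L$, one has $h=O(d^2)$, $\nabla h=O(d)$, $\nabla^2h=O(1)$ in the distance $d$ to $L$, so every term of $dd^\C(\chi h)$ (namely $\nabla^2\chi\cdot h$, $\nabla\chi\cdot\nabla h$, $\chi\cdot\nabla^2 h$) is $O(1)$ no matter how the two cutoff scales are chosen; the correction is never dominated by $\om$. The paper's smallness comes from an entirely different source: it first extends $\phi$ to an arbitrary $J$-convex $\wt\phi$ whose level sets meet $\Delta$ $J$-orthogonally, and then corrects the \emph{form} rather than the function, setting $\om_1=\om+dd^\C(\rho_\eps\psi_\eps)$, where $\psi_\eps$ is a potential of the difference $\wt\om=\om+dd^\C\wt\phi$ obtained by solving $\dbar\beta_\eps=\bar\alpha$ with the H\"ormander--Wermer $C^k$-estimates of~\cite[Theorem 8.36]{CieEli12} on shrinking collars $\Om_{3\eps}=\{\wt\phi<c+3\eps\}$. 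Because $\wt\om$ vanishes identically---hence to infinite order---on the \emph{codimension-zero} domain $W$, its primitive $\bar\alpha$ is $O(\eps^N)$ on the $\eps$-collar for every $N$, so $\|\psi_\eps\|_{C^2}$ beats the polynomial-in-$\eps^{-1}$ losses from the cutoff $\rho_\eps$. This quantitative $\dbar$-input on a full collar around $\p W$ is the ingredient your proposal is missing and cannot be replaced by first-order vanishing along $L$. Your idea of a real analytic Lagrangian thickening of $\Delta$ combined with Proposition~\ref{prop:potential} is not wasted, though: it is essentially what the paper does \emph{after} this lemma, in Steps 2--3 of the proof of Proposition~\ref{prop:rational-attaching}, where the collar matching has already been achieved and only the geometry near $\Delta$ remains to be normalized.
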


\begin{proof}
First, we extend the function $\phi$ to a $J$-convex function
$\wt\phi:\wt W\to\R$ on a slightly larger domain such that all level
sets of $\wt\phi$ in $\wt W\setminus W$ intersect $\Delta$
$J$-orthogonally, or in other words, $d^\C\phi|_{\Delta\cap(\wt
  W\setminus W)}=0$. 

To find a K\"ahler form extending $\wt\phi$, we argue as in the proof
of Lemma~\ref{lem:potential}. 
The closed $(1,1)$-form $\wt\om:=\om+dd^\C\wt\phi$ on $\wt W$ vanishes
on $W$. By the relative $d$-Poincar\'e lemma, we find a real $1$-form
$\lambda$ on $\wt W$ with $d\lambda=\wt\om$ and $\lambda|_W=0$. We
write $\lambda=\alpha+\bar\alpha$ for a $(1,0)$-form $\alpha$ on $\wt
W$, so that $\alpha|_W=0$ and 
$$
   \p\bar\alpha+\pbar\alpha=\wt\om,\qquad \p\alpha=0,\qquad
   \pbar\bar\alpha=0. 
$$
For sufficiently small $t\geq 0$ set $\Om_t:=\{\wt\phi<c+t\}\subset\wt
W$, so that $\Om_0=W$. Consider $\eps>0$ such that 
$\bar\Om_{3\eps}\subset\wt W$. By a result of H\"ormander and Wermer
(see~\cite[Theorem 8.36 and Remark 8.38]{CieEli12}), there exists a
smooth solution $\beta_\eps:\Om_{3\eps}\to\C$ of the equation
$\dbar\beta_\eps=\bar\alpha$ which satisfies for each integer $k\geq 0$ an 
estimate 
$$
   \|\beta_\eps\|_{C^k(\Om_{2\eps})}\leq 
   C_k\eps^{-n-k}\|\bar\alpha\|_{C^k(\Omega_{3\eps})}, 
$$
where $n=\dim_\C V$ and the constant $C_k$ depends on $k$ and the
diameter of the domain $\wt W$ but not on $\eps$. The function
$\psi_\eps:=\Im\beta_\eps:\Om_{2\eps}\to\R$ satisfies
$-dd^\C\psi_\eps=\wt\om|_{\Om_{2\eps}}$. Since $\bar\alpha|_W=0$, there
exists for every $k,N$ a constant $C_{k,N}$ depending on $k$ and $N$
but not on $\eps$ such that $\|\bar\alpha\|_{C^k(\Om_{3\eps})}\leq
C_{k,N}\eps^N$. It follows that
$$
   \|\psi_\eps\|_{C^k(\Om_{2\eps})}\leq \|\beta\|_{C^k(\Om_{2\eps})}\leq 
   C_{k,N}'\eps^{N-n-k} 
$$
with constants $C_{k,N}'$ not depending on $\eps$. Fix a cutoff
function $\rho:\R\to[0,1]$ with $\rho(t)=1$ for $t\leq 1$ and
$\rho(t)=0$ for $t\geq 2$ and define $\rho_\eps:V\to\R$ by
$\rho_\eps(x):=\rho\bigl(\frac{\phi(x)-c}{\eps}\bigr)$. Then the closed
$(1,1)$-form  
$$
   \om_1 := \om + dd^\C(\rho_\eps\psi_\eps)
$$
agrees with $\om+dd^\C\psi_\eps=-dd^\C\wt\phi$ on $\bar\Om_\eps$, and
with $\om$ outside $\Om_{2\eps}$. Moreover, for each $v\in TV$ with
$|v|^2=\om(v,Jv)=1$, the previous estimates yield
$$
   \om_1(v,Jv) \geq \om(v,Jv) -
   \|\rho_\eps\psi_\eps\|_{C^2(\Om_{2\eps})} \geq 1 - C_N'\eps^{N-n-2}
$$
with constants $C_N'$ not depending on $\eps$. Choosing $N:=n+3$ and
$\eps$ sufficiently small, we can arrange $\om_1(v,Jv)\geq 1/2$, so
$\om_1$ is a K\"ahler form and the restriction $\phi_1$ of $\wt\phi$
to the domain $W_1:=\bar\Om_\eps$ is the desired extension of $\phi$. 
\end{proof}

\begin{proof}[Proof of Proposition~\ref{prop:rational-attaching}]
{\bf Step 1. }Let $\phi_1:W_1=\{\phi_1\leq c_1\}\to\R$ and $\om_1$ be
the extensions provided by Lemma~\ref{lem:extension} and rename
$\phi_1,\om_1$ back to $\phi,\om$. We pick a value
$c'\in(c,c_1)$ and set $W':=\{\phi\leq c'\}$,
$\Delta':=\Delta\cap(V\setminus\Int W'\}$. After adding a constant to
$\phi$, we may assume without loss of generality that $c'=-1$. 

For the following, we need some notation from~\cite{CieEli12}. On
$\C^n$ with complex coordinates $z_j=x_j+iy_j$, we introduce the
functions 
$$
   r := \sqrt{x_1^2+\cdots+x_n^2+y_{k+1}^2+\cdots+y_n^2},\qquad
   R := \sqrt{y_1^2+\cdot+y_k^2},
$$
and for some fixed $a>1$ the standard $i$-convex function  
$$
   \psi_\st(r,R) := ar^2-R^2. 
$$
For $\eps>0$ we define the subsets
\begin{gather*}
   H_\eps := \{R\leq 1+\eps,\ r\leq\eps\}, \qquad
   H_\eps^y := H_\eps\cap\{x=0\}, \cr
   U_\eps := \{1-\eps\leq R\leq 1+\eps,\ r\leq\eps\}, \qquad 
   U_\eps^y := U_\eps\cap\{x=0\}, \cr
   D^k := \{R\leq 1,\ r=0\}.
\end{gather*}

{\bf Step 2. }Using~\cite[Theorem 5.53]{CieEli12}, we find a real
analytic embedding  
$f:H_\eps^y\into V$ such that $f(D^k)=\Delta'$, and $\phi\circ f$
agrees with $\psi_\st$ to first order along $\p D^k$. Moreover, by
property (ii) in Lemma~\ref{lem:extension} we can arrange that
$f^*d^\C\phi=0$ near $\p D^k$.  

We extend $f$ uniquely to a holomorphic embedding $F:(H_\eps,i)\into
(V,J)$ (for some possibly smaller $\eps>0$) and set $\psi:=\phi\circ
F:U_\eps\to\R$. Then for any $z\in\p D^k$ and $v\in T_zH_\eps^y=i\R^n$
we have $d_z\psi(v)=d_z\psi_\st(v)$ and
$d_z\psi(iv)=(f^*d^\C\phi)(v)=0=d_z\psi_\st(iv)$, so the $i$-convex
functions $\psi$ and $\psi_\st$ agree to first order at points of $\p
D^k$. According to~\cite[Proposition 3.26]{CieEli12}, there exists an
$i$-convex function $\vartheta:U_\eps\to\R$ which agrees with $\psi$
near $\p U_\eps$, and with $\psi_\st$ on $U_\delta$ for some
$\delta<\eps$. Moreover, according to ~\cite[Remark
3.27 (ii)]{CieEli12}, the conditions
$d^\C\psi|_{U_\eps^y}=d^\C\psi_\st|_{U_\eps^y}=0$ imply
$d^\C\vartheta|_{U_\eps^y}=0$.
%, and therefore $d^\C\vartheta|_{U_\eps^y}=0$. 
Hence, after replacing $\phi$ by
$\vartheta\circ F^{-1}$ on $F(U_\eps)$ and shrinking $\eps$, we may
assume that $F^*\phi=\psi_\st$ on $U_\eps$. 

{\bf Step 3. }After real analytic approximations using~\cite[Theorem
  5.53]{CieEli12}, we may assume that $\phi$ and $\om$ are real
analytic on $F(H_\eps)$. Then $F^*\om$ is a real analytic K\"ahler form on $H_\eps$
which vanishes on $H_\eps^y$. By Lemma~\ref{lem:potential},
there exists a unique real analytic function $\psi':H_\eps\to\R$ (for
some possibly smaller $\eps$) satisfying 
$$
   -dd^\C\psi'=F^*\om,\qquad \psi'|_{H_\eps^y}=\psi_\st|_{H_\eps^y}, \qquad
   \frac{\p\psi'}{\p y_j}\Bigl|_{H_\eps^y}=0 \text{ for all }j=1,\dots,n. 
$$ 
Thus $\psi'$ agrees with $\psi_\st$ to first order along $H_\eps^y$. 
Moreover, since $-dd^\C\psi_\st=F^*\om$ on $U_\eps$ by Step 2,
uniqueness of $\psi'$ implies that $\psi'=\psi_\st$ on $U_\eps$. 

Again by~\cite[Proposition 3.26]{CieEli12}, there exists an
$i$-convex function $\vartheta':H_\eps\to\R$ which agrees with
$\psi'$ near $\p H_\eps$, and with $\psi_\st$ on $H_\gamma\cup U_\eps$
for some $\gamma<\eps$. Let $\phi':W'\cup F(H_\eps)\to\R$ be the $J$-convex
function which equals $\phi$ on $W'$, and $\vartheta'\circ F^{-1}$ on
$F(H_\eps)$. By construction, $-dd^\C\phi'$ extends by $\om$ to a
K\"ahler form $\om'$ on $V$ which equals $\om$ outside $U$. Moreover,
$F^*\phi'=\psi_\st$ on $H_\gamma$. 
%\marginpar{Explain that gradient is tangent to $\Delta$.}

{\bf Step 4. }According to~\cite[Corollary 4.4]{CieEli12}, there exists an
$i$-convex function $\wt\psi:H_\gamma\to\R$ with the following
properties:
\begin{itemize}
\item $\wt\psi=\psi_\st$ near $\p H_\gamma$;
\item $\wt\psi$ has a unique index $k$ critical point at the origin
  whose stable disc is $D^k$; 
\item $\wt\psi|_{D^k}<-1$. 
\end{itemize}
Let $\wt\phi:W'\cup F(H_\eps)\to\R$ be the $J$-convex function which equals 
$\wt\psi\circ F^{-1}$ on $F(H_\gamma)$ and $\phi'$ outside, and set
$\wt W:=\{\wt\phi\leq -1\}$. By construction, we have $\wt W\subset U$
and $W\subset\Int\wt W$. Moreover, $-dd^\C\wt\phi|_{\wt W}$
extends by $\om'$ to a K\"ahler form $\wt\om$ on $V$ which equals
$\om$ outside $U$. This concludes the proof of 
Proposition~\ref{prop:rational-attaching}.
%and hence of Theorem~\ref{thm:rational-attaching}.  
\end{proof}

For the proof of Theorem~\ref{thm:Weinstein}, we will need the
following refinement of Proposition~\ref{prop:rational-attaching} in
the presence of an ambient Weinstein structure. 

\begin{prop}\label{prop:rational-attaching-Weinstein}
Let $(V,J)$ be a complex manifold equipped with a Weinstein structure
$(\om,X,\phi)$ such that $\om$ is a K\"ahler form for $J$. 
Let $W=\{\phi\leq c\}\subset V$ be a $J$-convex domain such that
$\phi|_W$ is defining and $J$-convex and
$\fW(W,J,\phi)=(\om,X,\phi)$ on $W$. 
Let $\Delta\subset V\setminus\Int W$ be a real analytic stable disc of
an index $k$ critical point $p$ of $(V,\om,X,\phi)$ on the first
critical level above $c$. 

Then for every open neighborhood $U$ of $W\cup\Delta$ there exists a
Weinstein structure $(\wt\om,\wt X,\wt\phi)$ on $V$ and a $J$-convex
domain $\wt W=\{\wt\phi\leq\wt c\}\subset U$ such that 
\begin{itemize}
\item $W\subset\Int\wt W$ and $(\wt\om,\wt X,\wt\phi)=(\om,X,\phi)$ on
  $W$; 
\item $\wt\phi|_{\wt W}$ is defining and $J$-convex and
$\fW(\wt W,J,\wt\phi)=(\wt\om,\wt X,\wt\phi)$ on $\wt W$;
\item $\wt\om$ is a K\"ahler form for $J$; 
\item the Weinstein structures $(\wt\om,\wt X,\wt\phi)$ and
  $(\om,X,\phi)$ agree outside $U$, have the same critical points, and
  are Weinstein homotopic via a homotopy with fixed critical points and
  fixed on $W$ as well as outside $U$.
%\item there is a regular value $c$ of the function $\wt\phi^1$ such that
%$\wt W_0\subset \wt W^1:=\{\wt\phi^1\leq c\}\subset U$, the function
%on $\wt\phi^1|_{\wt W^1}$ is $i$-convex and $\wt\fW^1_{\wt
%W^1}=\fW(i,\wt\phi^1)$. 
\end{itemize} 
%Denote $\wt W_2^1:= \wt W_2\cup (\wt W_1\setminus \Int \wt
%W^1_1)$. Let $\Delta_2$ be the stable disc of the critical point
%$p_2\in\wt W_2^1$ of the function $\wt\phi^1$. We have
%$\wt\om^1|_{\Delta_2}=0$. Hence, we can inductively continue the
%process. 
\end{prop}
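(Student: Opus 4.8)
The plan is to reduce to Proposition~\ref{prop:rational-attaching} and then carry the Liouville field and the Weinstein homotopy along by hand. The first task is to check that the hypotheses of Proposition~\ref{prop:rational-attaching} hold. Because $\Delta$ lies in the stable manifold of the Liouville field $X$, the field $X$ is tangent to $\Delta$ and $\Delta$ is $\om$-isotropic; in particular $\om|_\Delta=0$, and the primitive $\lambda:=\iota_X\om$ restricts to $0$ on $T\Delta$. Along $\p\Delta\subset\p W$ the hypothesis $\fW(W,J,\phi)=(\om,X,\phi)$ gives $X=\nabla_\phi\phi$ and hence $\lambda=\iota_X\om=-d^\C\phi$. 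For $v\in T_x\p\Delta\subset T_x\Delta$ we then get $d^\C\phi(v)=-\lambda(v)=0$, so $Jv\in\ker d\phi=T_x\p W$; thus $T_x\p\Delta$ lies in the complex tangency $\xi:=T(\p W)\cap J\,T(\p W)$ and $J(T_x\p\Delta)\subset\xi\subset T_x\p W$. Moreover $d^\C\phi(X)=-\lambda(X)=-\om(X,X)=0$ gives $JX(x)\in T_x\p W$, and since $T_x\Delta=T_x\p\Delta\oplus\R\,X(x)$ (as $X$ is transverse to the regular level set $\p W$), we conclude $J(T_x\Delta)\subset T_x\p W$ for all $x\in\p\Delta$, i.e.~$\Delta$ is $J$-orthogonally attached.

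Next I would apply Proposition~\ref{prop:rational-attaching} with a thin neighborhood $N$ of $W\cup\Delta$ in place of $U$, where $\overline N\subset U$ is chosen to meet $\Crit(\phi)\setminus W$ only in $p$ (possible since $W\cup\Delta$ is compact and $p$ is its only critical point outside $W$). This yields a $J$-convex domain $\wt W\subset N$ with $W\subset\Int\wt W$, a defining $J$-convex function $\wt\phi$ with $\wt\phi|_W=\phi$ whose only critical point in $\wt W\setminus W$ is $p$, with stable disc $\Delta$, and a K\"ahler form $\wt\om$ on $V$ equal to $-dd^\C\wt\phi$ on $\wt W$ and to $\om$ outside $N$. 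On $\wt W$ I set $\wt X:=\nabla_{\wt\phi}\wt\phi$, the gradient for the K\"ahler metric $\wt\om(\cdot,J\cdot)$; then $\iota_{\wt X}\wt\om=-d^\C\wt\phi$, so $\wt X$ is Liouville and $\fW(\wt W,J,\wt\phi)=(\wt\om,\wt X,\wt\phi)$ by definition. Since $\wt\phi,\wt\om$ coincide with $\phi,\om$ on $W$ while $X=\nabla_\phi\phi$ there, we get $\wt X=X$ on $W$. All further modifications will be confined to $N$, so the structure and the homotopy are automatically fixed outside $N\subset U$ and the critical points of $(\om,X,\phi)$ lying outside $N$ are untouched; it remains to extend $(\wt\om,\wt X,\wt\phi)$ over the collar $N\setminus\Int\wt W$ and to build the homotopy on $N$.

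For this I would work in the Weinstein cobordism $\overline N\setminus\Int W$, which contains the single critical point $p$. Extending $\wt\phi$ over the collar $N\setminus\Int\wt W$ by $\phi$ near $\p N$, without new critical points (as $\p\wt W$ is regular), gives two Weinstein structures on this cobordism---the original $(\om,X,\phi)$ and the new $(\wt\om,\wt X,\wt\phi)$---each realizing a single index $k$ handle attached along the same isotropic sphere $\p\Delta$. Near $p$ both are index $k$ Weinstein handles (the new one by the construction of Proposition~\ref{prop:rational-attaching} via $\psi_\st=ar^2-R^2$, the original one by the local normal form for a critical point of a Weinstein structure, see~\cite{CieEli12}), so the homotopy may begin with a normalization supported in a small ball around $p$, after which the two structures agree on $W$, near $p$, and outside $N$. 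On the remaining collar, which is disjoint from all critical points, I join the symplectic forms by $\om_s:=(1-s)\om+s\wt\om$, each of which is closed, of type $(1,1)$ and positive, hence K\"ahler for $J$; interpolating the potentials and the Liouville primitives $\iota_{\wt X}\wt\om$ and $\iota_X\om$ by a cutoff, and taking $\wt X$ to be the $\wt\om$-dual of the interpolated primitive, produces the global field and a homotopy $(\om_s,X_s,\phi_s)$ that is constant on $W$, near $p$, and outside $N$.

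The hard part will be the gradient-like condition. An unconstrained interpolation of Liouville primitives (or of potentials) on the collar can create zeros of the vector field or violate $d\phi_s(X_s)>0$, which would manufacture spurious critical points and destroy both the Weinstein property and the requirement of fixed critical points. This is exactly where the coincidence of the attaching data is used: because both structures share the critical point $p$ and the isotropic stable disc $\Delta$, and the interpolation is confined to a thin collar on which---after the normalizations of Proposition~\ref{prop:rational-attaching} and the index $k$ handle identification near $p$---the two structures are $C^1$-close, the interpolated field stays uniformly transverse to the level sets of $\phi_s$ and nowhere zero. Hence no new critical points arise, each $(\om_s,X_s,\phi_s)$ is a genuine Weinstein structure with the prescribed critical points, and the resulting homotopy is fixed on $W$ and outside $U$; since $\wt\om$ is K\"ahler for $J$ and agrees with $\om$ outside $N\subset U$, all the stated properties hold, completing the construction.
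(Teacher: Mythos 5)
Your opening verification is correct and matches what the paper implicitly uses: since $\Delta$ is a stable disc of the Liouville field $X$ it is $\om$-isotropic, and since $\iota_X\om=-d^\C\phi$ on $W$, the vanishing of $\iota_X\om$ on $T\Delta$ gives $J(T_x\Delta)\subset\ker d\phi_x=T_x\p W$ at $x\in\p\Delta$, i.e.\ $J$-orthogonal attachment. The genuine gap is in your last two paragraphs, where the global Liouville field and the Weinstein homotopy with fixed critical points are constructed. You propose to interpolate linearly between $(\om,X,\phi)$ and $(\wt\om,\wt X,\wt\phi)$ on a collar, and you correctly flag the gradient-like condition as the danger; but your resolution --- that after a ``normalization near $p$'' the two structures are $C^1$-close on a thin collar --- is unsupported and in general false. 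The function $\wt\phi$ produced by Proposition~\ref{prop:rational-attaching} is transplanted from the standard model $\psi_\st=ar^2-R^2$ (modified so that its critical value drops below the defining level) by a holomorphic embedding, whereas on the same region the original $\phi$ is an arbitrary Lyapunov function for $X$, in general neither $J$-convex nor a potential of $\om$; no norm makes these close, and shrinking the collar controls the size of the region, not the $C^1$-distance of the data on it. The ``normalization near $p$'' is itself a nontrivial Weinstein homotopy statement that you do not prove, and even the requirement that both structures have their critical point at the same point $p$ is not delivered by a black-box use of Proposition~\ref{prop:rational-attaching}: that proposition only pins down the stable disc $\Delta$, so its critical point could a priori be a different interior point of $\Delta$.

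The paper's proof avoids all of this by a different order of operations, and this is where the essential idea lies. Instead of quoting Proposition~\ref{prop:rational-attaching} as a black box, it reruns Steps 1--3 of its proof in the Weinstein setting so as to produce a $J$-convex function $\phi'$ near $W\cup\Delta$ which is a potential of the \emph{original} form, $-dd^\C\phi'=\om|_{U'}$ --- this is exactly what Lemma~\ref{lem:potential} buys, namely the real-analytic potential of $F^*\om$ with prescribed first-order data along the isotropic disc --- with $\phi'=\phi$ on $W\cup\Delta$ and with $p$ as the unique critical point with stable disc $\Delta$. Because the symplectic form is now \emph{fixed}, the gluing/homotopy problem that defeats your interpolation becomes exactly the situation handled by \cite[Proposition 12.14]{CieEli12}: it yields a Weinstein structure $(\om,X'',\phi'')$ agreeing with $(\om,X',\phi')$ near $W\cup\Delta$ (where $X'$ is the $\om$-dual of $-d^\C\phi'$) and with $(\om,X,\phi)$ outside $U$, Weinstein homotopic to $(\om,X,\phi)$ with fixed critical points and fixed $\om$. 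Only after this does the paper change the symplectic form, applying Step 4 inside the explicit handle model. So the missing idea in your write-up is precisely this: first arrange the new potential to be a potential of the \emph{same} $\om$, then invoke the fixed-form gluing machinery, rather than attempting a direct interpolation of two unrelated Weinstein structures.
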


\begin{proof}
As in Steps 1-3 of the proof of
Proposition~\ref{prop:rational-attaching}, we construct a $J$-convex
function $\phi':U'\to\R$ on a neighborhood $U'\subset U$ of
$W\cup\Delta$ with the following properties:
\begin{itemize}
\item $\phi'=\phi$ on $W\cup\Delta$;
\item $-dd^\C\phi'=\om|_{U'}$;
\item $p$ is the unique critical point (of index $k$) of $\phi'$ in
  $U'\setminus W$ with stable disc $\Delta$. 
\end{itemize}
According to~\cite[Proposition 12.14]{CieEli12}, there exists a 
Weinstein structure $(\om,X'',\phi'')$ on $V$ which agrees with
$(\om,X,\phi)$ outside $U$ and with $(\om,X',\phi')$ on a
neighborhood $U''\subset U'$ of $W\cup\Delta$ such that the Weinstein
structures $(\om,X'',\phi'')$ and $(\om,X,\phi)$ have the same
critical points and are Weinstein homotopic via a homotopy with fixed
critical points, fixed $\om$, and fixed on $W$ as well as outside $U$. 

Finally, we apply Step 4 of the proof of
Proposition~\ref{prop:rational-attaching} to modify $(\om,X'',\phi'')$
inside $U''$ to obtain the desired Weinstein structure $(\wt\om,\wt
X,\wt\phi)$ and $J$-convex domain $\wt W=\{\wt\phi\leq\wt c\}\subset
U$. 
\end{proof}

\section{Proofs of the main results}\label{sec:proofs}
%%%%%%%%%%%%%%%%%%%%%%%%%%%%%%%%%%%%%%%%%%%%%%%%%%%%%%%%%%%%%%%%%%%%%

\begin{proof}[Proof of Theorem~\ref{thm:Weinstein}]
(a) is a special case of~\cite[Theorem 13.4]{CieEli12}. 

(b) For the ``only if'', suppose that (after an isotopy)
$f(W)=\wt W\subset\C^n$ is rationally convex and $f^*i$ is Stein
homotopic to $J$. By Criterion~\ref{prop:rational}, there exists a
defining $i$-convex 
function $\wt\phi:\wt W\to\R$ such that $-dd^\C\wt\phi$ extends to a
K\"ahler form $\wt\om$ on $\C^n$. After applying Lemma~\ref{lem:infty}
and rescaling, we may assume that $\wt\om=\om_\st$ at infinity. Then
Moser's stability theorem yields a family of diffeomorphisms
$g_t:\C^n\to\C^n$ with $g_0=\id$, $g_t=\id$ at infinity, and
$g_1^*\om_\st=\wt\om$. Thus $g_t\circ f$ is an isotopy from $f$ to the
symplectic embedding $g_1\circ f:(W,f^*\wt\om)\into
(\C^n,\om_\st)$. Moreover, $f^*\wt\om$ is the symplectic form
of the Weinstein structure $\fW(W,f^*i,f^*\wt\phi)$, which belongs to
the class $\fW(W,J)$ by assumption. 

For the ``if'', suppose that (after an isotopy)
$f:(W,\om)\into(\C^n,\om_\st)$ is a symplectic embedding for some
$(\om,X,\phi)\in\fW(W,J)$. Applying
Proposition~\ref{prop:rational-attaching-Weinstein} inductively to
sublevel sets of the Weinstein structure
$(f_*\om=\om_\st,f_*X,f_*\phi)$ on $\bigl(f(W),i\bigr)$, we construct a 
Weinstein structure $(\wt\om,\wt X,\wt\phi)$ on $f(W)$ and an
$i$-convex domain $\wt W=\{\wt\phi\leq\wt c\}\subset \Int f(W)$ such
that  
\begin{enumerate}
\item $\wt\phi|_{\wt W}$ is defining and $i$-convex and
$\fW(\wt W,i,\wt\phi)=(\wt\om,\wt X,\wt\phi)$ on $\wt W$; 
\item $\wt\om$ is a K\"ahler form for $i$; 
\item the Weinstein structures $(\wt\om,\wt X,\wt\phi)$ and
  $(f_*\om=\om_\st,f_*X,f_*\phi)$ agree near $\p f(W)$, have the same
  critical points, and are Weinstein homotopic via a homotopy with
  fixed critical points and fixed near $\p f(W)$.  
\end{enumerate}
Thus $-dd^\C\wt\phi$ extends via $\wt\om$ to a K\"ahler form on
$f(W)$, and from there via $\om_\st$ to a K\"ahler form on $\C^n$, so
by Criterion~\ref{prop:rational} the domain $\wt W\subset\C^n$ is
rationally convex. Since $\wt\phi$ has no critical points in
$f(W)\setminus\Int\wt W=\{\wt c\leq\wt\phi\leq c\}$, we find a family
of diffeomorphisms $f_t:W\to W_t$ onto sublevel sets of $\wt\phi$ such
that $f_0=f$ and $W_1=\wt W$. Hence the pullback Weinstein structure
$\fW(W,f_1^*i,f_1^*\wt\phi)=f_1^*(\wt\om,\wt X,\wt\phi)$ is homotopic
to $f^*(\wt\om,\wt X,\wt\phi)$, which by property (iii) is homotopic to 
$(\om,X,\phi)\in\fW(W,J)$. By~\cite[Theorem 15.2]{CieEli12}, this
implies that the Stein structures $f_1^*i$ and $J$ are Stein
homotopic. 

(c) For the ``only if'', suppose that (after an isotopy)
$f(W)=\wt W\subset\C^n$ is polynomially convex and $f^*i$ is Stein
homotopic to $J$. By Criterion~\ref{prop:rational}, there exists an
exhausting $i$-convex function $\wt\phi:\C^n\to\R$ such that $\wt
W=\{\wt\phi\leq 0\}$. After applying Lemma~\ref{lem:infty},
we may assume that $\wt\phi=\phi_\st$ at infinity. Then
Moser's stability theorem yields a family of diffeomorphisms
$g_t:\C^n\to\C^n$ with $g_0=\id$, $g_t=\id$ at infinity, and
$g_1^*\om_\st=\wt\om:=-dd^\C\wt\phi$. Thus $g_t\circ f$ is an isotopy
from $f$ to the symplectic embedding $\wt f:=g_1\circ
f:(W,f^*\wt\om)\into (\C^n,\om_\st)$. Moreover, $f^*\wt\om$ is the
symplectic form of the Weinstein structure $\fW(W,f^*i,f^*\wt\phi)$,
which belongs to the class $\fW(W,J)$ by assumption. Finally, 
note that the push-forward Weinstein structure
$\wt f_*\fW(W,f^*i,f^*\wt\phi)$ extends to the Weinstein structure
$g_1^*\fW(\C^n,i,\wt\phi)$ on the whole $\C^n$ which is standard at
infinity. This Weinstein structure is homotopic via
$g_t^*\fW(\C^n,i,\wt\phi)$ to $\fW(\C^n,i,\wt\phi)$, and hence belongs
to the class $\fW(\C^n,i)$. 

For the ``if'', suppose that (after an isotopy)
$f:(W,\om)\into(\C^n,\om_\st)$ is a symplectic embedding for some
$(\om,X,\phi)\in\fW(W,J)$, and the push-forward Weinstein structure
$(f_*\om=\om_\st,f_*X,f_*\phi)$ extends to a Weinstein structure
$(\om_1,X_1,\phi_1)$ on the whole $\C^n$ which is 
%standard at infinity, and 
homotopic to the standard Weinstein structure
$(\om_\st,X_0,\phi_0)=\fW(\C^n,i,\frac{|z|^2}{4})$ via a
Weinstein homotopy $(\om_t,X_t,\phi_t)$.
% fixed at infinity.  
%According to~\cite[Theorem 15.1]{CieEli12}, (applied to a
%large ball in $\C^n$), after replacing $\phi_t$ by $g\circ\phi_t$ for
According to~\cite[Theorem 15.3]{CieEli12}, after composing the
$\phi_t$ with a convex increasing diffeomorphism $g:\R\to\R$, there
exists a family of diffeomorphisms $h_t:\C^n\to\C^n$, $t\in[0,1]$,
with $h_0=\id$ such that 
\begin{enumerate}
%\item $h_0=\id$, and $h_t=\id$ at infinity;
\item the functions $\phi_t\circ h_t^{-1}:\C^n\to\R$ are
  $i$-convex; 
\item the paths of Weinstein structures $(\om_\st,X_t,\phi_t)$ and
  $\fW(\C^n,h_t^*i,\phi_t)$ are homotopic with fixed functions
  $\phi_t$ and fixed at $t=0$.
\end{enumerate}
Thus $f_t:=h_t\circ f:W\into\C^n$ is a smooth isotopy from $f_0=f$ to
an embedding $f_1$ whose image $f_1(W)=h_1\bigl(f(W)\bigr)$ is a
sublevel set $\{\phi_1\circ h_1^{-1}\leq c\}$ of the $i$-convex
function $\phi_1\circ h_1^{-1}:\C^n\to\R$, and hence polynomially
convex by Criterion~\ref{prop:polynomial}. Due to property (ii) at  
$t=1$, the Weinstein structures $\fW\bigl(f(W),h_1^*i,\phi_1\bigr)$
and $(\om_1,X_1,\phi_1)$ on $f(W)=\{\phi_1\leq c\}$ are homotopic
with fixed function $\phi_1$. Therefore, the pullback Weinstein
structure $\fW(f_1^*i,\phi_1\circ
f)=f^*\fW\bigl(f(W),h_1^*i,\phi_1\bigr)$ is homotopic to
$f^*(\om_1,X_1,\phi_1)=(\om,X,\phi)\in\fW(W,J)$.  
By~\cite[Theorem 15.2]{CieEli12}, this
implies that the Stein structures $f_1^*i$ and $J$ are Stein
homotopic. 
\end{proof}
\medskip
 
\begin{proof}[Proof of Theorem~\ref{thm:flexible}]
The result is a consequence of Theorem~\ref{thm:Weinstein}
and the $h$-principle for Lagrangian caps in~\cite{EliMur13}. 
Let $(W,J)$ be a flexible Stein
domain of complex dimension $n\geq 3$, and $f:W\into\C^n$ a smooth
embedding such that $f^*i$ is homotopic to $J$ through almost complex
structures. Let $(\om,X,\phi)\in\fW(W,J)$ be an associated 
flexible Weinstein structure on $W$. 
According to~\cite[Corollary 6.3]{EliMur13}, the embedding $f$ is
isotopic to a symplectic embedding $\wt f:(W,\om)\into(\C^n,\om_\st)$. 
Hence, by Theorem~\ref{thm:Weinstein}(b), the embedding $f$ is
isotopic to a deformation equivalence onto a rationally convex domain. 
  
Now suppose in addition that $H_n(W;G)=0$ for every abelian group
$G$. Let $\wt\fW:=(\om_\st,\wt X:=\wt f_*X,\wt\phi:=\phi\circ\wt
f^{-1})$ be the push-forward Weinstein structure on $\wt W:=\wt
f(W)\subset\C^n$. 
According to Lemma~\ref{lm:top}, the defining function $\wt\phi:
\wt W\to\R$ extends to a Morse function $\wh\phi:\C^n\to\R$ without
critical points of index $>n$ which equals $\wh\phi(z)=|z|^2$ at infinity.
By~\cite[Theorem 13.1]{CieEli12}, we can extend the Weinstein
structure $\wt\fW$ to a flexible Weinstein structure
$\wh\fW=(\wh\om,\wh X,\wh\phi)$ on $\C^n$ such that the forms $\wh\om$
and $\om_\st$ are homotopic rel $\wt W$ as non-degenerate $2$-forms.  
According to~\cite[Theorem 14.5]{CieEli12}, the two flexible Weinstein
structures $\wh\fW$ and $\fW_\st$ are homotopic.
Hence, by Theorem~\ref{thm:Weinstein}(c), the embedding $f$ is
isotopic to a deformation equivalence onto a polynomially convex
domain.  
\end{proof}

For the proof of Corollary~\ref{cor:Lagrangian}, we need the following
lemma about Weinstein structures.

\begin{lemma}\label{lem:Lagrangian}
Let $\fW_t=(\om_t,X_t,\phi_t)$, $t\in[0,1]$, be a Weinstein homotopy on
$D^*L$ with Liouville forms $\lambda_t=i_{X_t}\om_t$ such that
$\fW_0=\fW(D^*L,J_\Grauert,\phi_\Grauert)$ is the Weinstein structure
associated to a Grauert tube.  
%$(dp\wedge dq,p\p_p,|p|^2)$ on $D^*L$. 
Then there exists an isotopy of
submanifolds $L_t\subset D^*L$ starting with the zero section $L_0$
such that $\lambda_t|_{L_t}$ is exact for all $t\in[0,1]$. 
\end{lemma}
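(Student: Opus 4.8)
The key observation is that the zero section starts out not merely Lagrangian, but \emph{null} for the Liouville form. Indeed, since $\phi_\Grauert$ has the zero section $L$ as its (Morse--Bott) critical locus, the Liouville field $X_0=\nabla_{\phi_\Grauert}\phi_\Grauert$ vanishes along $L$, so that $\lambda_0|_L=i_{X_0}\om_0|_L=0$; in particular $\lambda_0|_{L_0}$ is exact, and $L_0:=L$ is $\om_0$-Lagrangian since $\om_0|_L=(d\lambda_0)|_L=d(\lambda_0|_L)=0$. The plan is to transport $L$ by the Moser isotopy adapted to the \emph{given} primitives $\lambda_t$ of $\om_t=d\lambda_t$ (recall $d\,i_{X_t}\om_t=\om_t$ since $X_t$ is Liouville); this single isotopy will preserve both properties at once.

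Concretely, I would define a time-dependent vector field $W_t$ on $D^*L$ by $i_{W_t}\om_t=-\dot\lambda_t$ (well defined as $\om_t$ is nondegenerate), let $h_t$ be its flow with $h_0=\id$, and set $L_t:=h_t(L)$. Using $\dot\om_t=d\dot\lambda_t$, one computes
$$
   \ddt\,h_t^*\om_t=h_t^*\bigl(\dot\om_t+d\,i_{W_t}\om_t\bigr)=h_t^*\bigl(d\dot\lambda_t-d\dot\lambda_t\bigr)=0,
$$
so $h_t^*\om_t=\om_0$ and each $L_t$ is $\om_t$-Lagrangian. For the Liouville forms, Cartan's formula together with the defining relation for $W_t$ gives
$$
   \ddt\,h_t^*\lambda_t=h_t^*\bigl(\dot\lambda_t+\mathcal{L}_{W_t}\lambda_t\bigr)=h_t^*\,d\bigl(\lambda_t(W_t)\bigr)=d\,h_t^*\bigl(\lambda_t(W_t)\bigr),
$$
which is exact. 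Integrating yields $h_t^*\lambda_t=\lambda_0+df_t$ with $f_t:=\int_0^t h_s^*\bigl(\lambda_s(W_s)\bigr)\,ds$ and $f_0=0$. Restricting to $L$ and using $\lambda_0|_L=0$ we get $(h_t^*\lambda_t)|_L=d(f_t|_L)$, hence $\lambda_t|_{L_t}=(h_t^{-1})^*d(f_t|_L)$ is exact, as required.

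The conceptual point I would stress is that choosing the primitive $\dot\lambda_t$ (rather than an arbitrary primitive of $\dot\om_t$) is precisely what kills a potential cohomological obstruction. For a \emph{generic} Moser isotopy making the $L_t$ Lagrangian, one would only control the class $[\lambda_t|_{L_t}]\in H^1(L;\R)$ up to the flux $[\lambda_t]-[\lambda_0]\in H^1(D^*L;\R)$, which may well vary along a Weinstein homotopy; the adapted field $W_t$ forces this variation to be exact automatically, so no separate flux correction is needed.

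The main obstacle I anticipate is not cohomological but analytic: the flow $h_t$ of $W_t$ must exist for all $t\in[0,1]$ and keep the compact $L$ inside the compact domain $D^*L$. Since we only need to transport the compact Lagrangian $L\subset\Int D^*L$ (not all of $D^*L$) and the identities above are local along $L_t$, I would handle this by arranging $W_t$ to vanish near $\p D^*L$ --- after a preliminary normalization this reduces to the case where $\fW_t$ is fixed near the boundary, in which case $\dot\lambda_t\equiv 0$ there, $W_t$ vanishes on a collar, and uniqueness of solutions of the defining ODE keeps all trajectories in a fixed compact subset of $\Int D^*L$, so that $h_t$ is complete and domain-preserving. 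Verifying that this truncation can be performed without affecting the exactness conclusion is the one step that requires genuine care.
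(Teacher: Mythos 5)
Your core computation is correct, and it is in fact the same mechanism the paper uses: the observation that $\lambda_0|_{L}=0$ (the Liouville field of the Grauert tube structure vanishes along the critical locus $L$ of $\phi_\Grauert$), combined with the Moser trick performed with the distinguished primitives $\dot\lambda_t$ of $\dot\om_t$ so that no flux correction is needed. This hand-rolled Moser argument is precisely the content of the Liouville-form version of Moser's stability theorem that the paper invokes (\cite[Theorem 6.8]{CieEli12}), which produces compactly supported diffeomorphisms $f_t$ with $f_t^*\lambda_t-\lambda_0=d\rho_t$ once the forms agree near the boundary.

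The genuine gap is the step you flag at the end: the ``preliminary normalization'' making $\lambda_t$ independent of $t$ near $\p (D^*L)$. This is not a routine truncation, and your cutoff fallback fails as stated: writing $Z_t$ for your Moser field and replacing it by $\chi Z_t$ for a cutoff $\chi$, one gets $\ddt\, h_t^*\lambda_t = h_t^*\bigl(d(\chi\,\lambda_t(Z_t)) + (1-\chi)\dot\lambda_t\bigr)$, whose second term is not exact; to discard it you need the trajectories emanating from $L$ to stay in $\{\chi=1\}$, which is exactly the a priori control on the flow of $Z_t$ that you lack --- the argument is circular. Nor can one simply posit that the Weinstein homotopy is fixed near the boundary: along a general Weinstein homotopy the induced contact structures $\xi_t=\ker(\lambda_t|_{\p (D^*L)})$ genuinely vary with $t$. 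The paper devotes half its proof to exactly this normalization: first Gray's stability theorem yields diffeomorphisms $g_t$ of the boundary with $g_t^*\xi_t=\xi_0$, which after extension over $D^*L$ reduces to the case of a fixed boundary contact structure; then pushing down by the flows of $-X_t$ for suitable point-dependent times produces embeddings $h_t:D^*L\into D^*L$ onto shrinking subdomains with $h_t^*\lambda_t=h_0^*\lambda_0$ near the boundary. Only after these two steps does your (compactly supported) Moser computation apply, giving the exact isotopy $L_t:=h_t\circ f_t(L_0)$. Supplying this Gray-plus-Liouville-flow normalization is what your proposal is missing.
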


\begin{proof} 
Set $W:=D^*L$ and consider the contact structures
$\xi_t:=\ker(\lambda_t|_{\p W})$ on its boundary. By Gray's stability
theorem, there exist diffeomorphisms $g_t:\p W\to \p W$ with
$g_t^*\xi_t=\xi_0$. After extending the $g_t$ to diffeomorphisms of
$W$ and replacing $\fW_t$ by their pullbacks, we may hence assume that
$\xi_t=\xi$ for all $t$. Pushing down by the flows of $-X_t$ for
suitable times, we then find embeddings $h_t:W\into W$ onto domains
with $W_t:=h_t(W)$ with $\p W_t$ transverse to $X_t$ such that
$h_t^*\lambda_t=h_0^*\lambda_0$ near $\p W$ for all $t$. Note that the
zero section $L_0=h_0(L_0)$ is contained in $W_0$ and
$h_0^*\lambda_0|_{L_0}=\lambda_0|_{L_0}=0$. 
 
By Moser's stability theorem (in the form stated in~\cite[Theorem
  6.8]{CieEli12}), there exist compactly supported diffeomorphisms
$f_t:W\to W$ with $f_0=\id$ such that
$f_t^*h_t^*\lambda_t-h_0^*\lambda_0=d\rho_t$ for compactly supported
functions $\rho_t:W\to\R$. Then the manifolds $L_t:=h_t\circ
f_t(L_0)\subset W_t\subset W$ satisfy
$$
   (h_t\circ f_t)(\lambda_t|_{L_t}) 
   = (f_t^*h_t^*\lambda_t)|_{L_0} 
   = (h_0^*\lambda_0+d\rho_t)|_{L_0} 
   = (d\rho_t)|_{L_0}, 
$$
and hence $\lambda_t|_{L_t}$ is exact for all $t\in[0,1]$. 
\end{proof}

\begin{proof}[Proof of Corollary \ref{cor:Lagrangian}]
Let $L$ be a closed $n$-dimensional manifold. 

(a) It is well known (see~\cite[Proposition 2.5]{CieEli12}) that every
totally real submanifold $L\cong L'\subset\C^n$ has a tubular
neighborhood which is a Grauert tube of $L$. 
Conversely, suppose that $(D^*L,J_\Grauert)$ is deformation equivalent
to an $i$-convex domain $W\subset\C^n$. 
Pick any defining $i$-convex function $\phi:W\to\R$. By
Lemma~\ref{lem:Lagrangian}, $W$ contains a submanifold diffeomorphic
to $L$ which is Lagrangian for $\om=-dd^\C\phi$, hence in particular
$i$-totally real. 
%The distance function from
%the zero section $L\subset D^*L$ with respect to some Hermitian metric
%yields a defining $J$-convex function $\psi:D^*L\to\R$ which attains
%its minimum along $L=\psi^{-1}(0)$. According to~\cite[Remark
%  15.5]{CieEli12}, there exists a defining $i$-convex function
%$\phi:W\to\R$ equivalent to $\psi$. Thus $\phi$ attains its minimum
%along a submanifold $L'\subset W$ diffeomorphic to $L$. Then
%$d^\C\phi|_{L'}=0$, so $L'$ is Lagrangian for $-dd^\C\phi$, and
%therefore $i$-totally real. 

(b) is a special case of Theorem~\ref{thm:Weinstein}(b), in view of
Weinstein's Lagrangian neighborhood theorem and
Lemma~\ref{lem:Lagrangian}.  

(c) follows immediately from the vanishing of $H_n(W;G)$ for
polynomially convex domains stated in Theorem~\ref{thm:polynomial}. 
\end{proof}

\end{document}